\newcommand{\blind}{1}
\newtheorem{theorem}{Theorem}
\newtheorem{algorithm}{Algorithm}[section]
\newtheorem{corollary}{Corollary}
\newtheorem{lemma}{Lemma}
\newtheorem{remark}{Comment}[section]
\numberwithin{remark}{section}
\newcommand{\be}{\begin{eqnarray}}
\newcommand{\ee}{\end{eqnarray}}
\newcommand{\ba}{\begin{array}}
\newcommand{\ea}{\end{array}}
\newcommand{\bs}{\begin{align}\begin{split}\nonumber}
\newcommand{\bsnumber}{\begin{align}\begin{split}}
\newcommand{\es}{\end{split}\end{align}}
\renewcommand{\(}{\left(}
\renewcommand{\)}{\right)}
\renewcommand{\[}{\left[}
\renewcommand{\]}{\right]}
\renewcommand{\hat}{\widehat}
\newcommand{\cc}{\mathbf{c}}
\newcommand{\Gn}{\mathbb{G}_n}
\newcommand{\Ep}{{\mathrm{E}}}
\newcommand{\barEp}{\bar \Ep}
\newcommand{\En}{{\mathbb{E}_n}}
\renewcommand{\Pr}{{\mathrm{P}}}
\def\RR{ {\Bbb{R}}}
\def\z{{{\iota}}}
\def\supp{{\rm supp}}
\newcommand{\ceil}[1]{\left\lceil #1 \right\rceil}
\newcommand{\semin}[1]{\phi_{{\rm min}}(#1)}
\newcommand{\semax}[1]{\phi_{{\rm max}}(#1)}
\renewcommand{\hat}{\widehat}
\renewcommand{\leq}{\leqslant}
\renewcommand{\geq}{\geqslant}
\newcommand{\sign}{ {\rm sign}}
\newcommand{\underf}{{\underline{f}}}
\newcommand{\mtau}{{\theta \tau}}
\newcommand{\gtau}{{\tau}}
\DeclareMathOperator{\diag}{diag}
\begin{document}

\def\spacingset#1{\renewcommand{\baselinestretch}%
{#1}\small\normalsize} \spacingset{1}


\if1\blind
{
  \title{\bf Valid Post-Selection Inference in High-Dimensional Approximately Sparse Quantile Regression Models}
  \author{Alexandre Belloni \hspace{.2cm}\\
    The Fuqua School of Business, Duke University\\
    and \\
    Victor Chernozhukov \hspace{.2cm} \\
    Department of Economics, Massachusetts Institute of Technology\\
        and \\
    Kengo Kato\hspace{.2cm} \\
    Graduate School of Economics, University of Tokyo}

  \maketitle
} \fi

\if0\blind
{
  \bigskip
  \bigskip
  \bigskip
  \begin{center}
    {\LARGE\bf Valid Post-Selection Inference in High-Dimensional Approximately Sparse Quantile Regression Models}
\end{center}
  \medskip
} \fi

\vspace{-1cm}

\bigskip
\begin{abstract}
This work proposes new inference methods for a regression coefficient of interest in a (heterogeneous) quantile regression model. We consider a high-dimensional model where the number of regressors potentially exceeds the sample size but a subset of them suffice to construct a reasonable approximation to the conditional quantile function. The proposed methods are  (explicitly or implicitly)  based on orthogonal score functions that protect against moderate model selection mistakes, which are often inevitable in the approximately sparse model considered in the present paper. We establish the uniform validity of the proposed confidence regions for the quantile regression coefficient. Importantly, these methods directly apply to more than one variable and a continuum of quantile indices. In addition, the performance of the proposed methods is illustrated through  Monte-Carlo experiments and an empirical example, dealing with risk factors in childhood malnutrition.
\end{abstract}

\noindent%
{\it Keywords:} quantile regression, confidence regions post model selection, orthogonal score functions
\vfill

\newpage
\spacingset{1.45} 
\section{Introduction}

Many applications of interest require the measurement of the distributional impact of a policy (or treatment) on the relevant outcome variable. Quantile treatment effects have emerged as an important concept for measuring such distributional impacts (see, e.g., \cite{Lehmann,koenker:book}). In this work we focus on the quantile treatment effect $\alpha_\tau$ on a policy/treatment variable $d$ of an outcome of interest $y$ in the (heteroskedastic) partially linear model:
$$
\tau\textrm{-quantile}(y\mid z, d) = d\alpha_\tau + g_\tau(z),
$$
where $g_\tau$ is the (unknown) confounding function of the other covariates $z$ which can be well approximated by a linear combination of $p$ technical controls. Large $p$ arises due to the existence of many features (as in genomic studies and econometric applications) and/or the use of basis expansions in non-parametric approximations. When $p$ is comparable or larger than the sample size $n$, this brings forth the need to perform model selection or regularization.

We propose methods to construct estimates and confidence regions for the coefficient of interest $\alpha_\tau$ based upon robust post-selection procedures.  We establish the (uniform) validity of the proposed methods in  a non-parametric setting. Model selection in those settings generically leads to a moderate mistake and traditional arguments based on perfect model selection do not apply. Therefore, the proposed methods are developed to be robust to  such model selection mistakes. Furthermore, they are directly applicable to construction of simultaneous confidence bands when $d$ is multivariate and a continuum of quantile indices is of interest.

Broadly speaking the main obstacle to construct confidence regions with (asymptotically) correct coverage is the estimation of the confounding function $g_\tau$ that is (generically) non-regular because of the high-dimensionality. To overcome this difficulty we construct (explicitly or implicitly) an orthogonal score function which leads to a moment condition that is immune to first-order mistakes in the estimation of the confounding function $g_\tau$. The construction is based on preliminary estimation of the confounding function and properly partialling out the confounding factors $z$ from the policy/treatment variable $d$. The former can be achieved via $\ell_1$-penalized quantile regression \cite{BC-SparseQR,kato} or post-selection quantile regression based on $\ell_1$-penalized quantile regression \cite{BC-SparseQR}. The latter is carried out by heteroscedastic post-Lasso \cite{T1996,BellChenChernHans:nonGauss} applied to a density-weighted equation.
Then we propose two estimators for $\alpha_{\tau}$ based on: (i) a moment condition based on an orthogonal score function; (ii) a density-weighted quantile regression with all the variables selected in the previous steps. The latter method is reminiscent of the ``post-double selection" method proposed in \cite{BCH2011:InferenceGauss,BelloniChernozhukovHansen2011}. Explicitly or implicitly the last step estimates $\alpha_\tau$ by minimizing a Neyman-type score statistic \cite{Neyman1979}.\footnote{We mostly focus on selection as a means of regularization, but certainly other regularizations (e.g. the use of $\ell_1$-penalized fits per se) are possible, although they perform no better than the methods we focus on.}

Under mild moment conditions and approximate sparsity assumptions, we establish that the estimator $\check\alpha_\tau$, as defined by either method (see Algorithms \ref{Alg:1} and \ref{Alg:2} below), is root-$n$ consistent and asymptotically normal,
\begin{equation}\label{Eq:InferentialAlpha}
\sigma_n^{-1} \sqrt{n} (\check \alpha_\tau - \alpha_\tau) \rightsquigarrow N (0,1),
\end{equation}
where $\rightsquigarrow$ denotes convergence in distribution; in addition the estimator $\check{\alpha}_{\tau}$ admits a (pivotal) linear representation. Hence the confidence region defined by
\begin{equation}
\label{Eq:InferenceDirect}
\mathcal{C}_{\xi,n}:=\{ \alpha \in \RR : |\alpha - \check\alpha_\tau| \leq \hat\sigma_n \Phi^{-1}(1-\xi/2)/\sqrt{n} \}
\end{equation}
has asymptotic coverage probability of $1-\xi$ provided that the estimate $\hat \sigma^2_n$ is consistent for $\sigma_n^2$, namely,  $\hat\sigma_n^2/\sigma_n^2 = 1+ o_{\Pr}(1)$. In addition, we establish that a Neyman-type score statistic
$L_n(\alpha)$ is asymptotically distributed as the chi-squared distribution with one degree of freedom when evaluated at the true value $\alpha=\alpha_\tau$, namely,
\begin{equation}
\label{Eq:InferenceLn}
n L_n(\alpha_\tau)  \rightsquigarrow \chi^2(1),
\end{equation}
which in turn allows the construction of another confidence region:
\begin{equation}
\label{Def:Inv} \mathcal{I}_{\xi,n}:=\{ \alpha \in \mathcal{A}_\tau:  n L_n(\alpha) \leq (1-\xi) \mbox{-quantile of} \ \chi^2(1)\},
\end{equation}
which has asymptotic coverage probability of $1-\xi$. These convergence results hold under array asymptotics, permitting the data-generating process $\Pr = \Pr_n$ to change with $n$, which implies that these convergence results hold uniformly over large classes of data-generating processes. In particular, our results do not require separation of regression coefficients away from zero (the so-called ``beta-min" conditions) for their validity. Importantly, we discuss how the procedures naturally allow for construction of simultaneous confidence bands for many parameters based on a (pivotal) linear representation of the proposed estimator.


Several recent papers study the problem of constructing confidence regions after model selection while allowing $p\gg n$.
In the case of linear mean regression, \cite{BCH2011:InferenceGauss} proposes a double selection inference in a parametric setting with homoscedastic Gaussian errors; \cite{BelloniChernozhukovHansen2011} studies a double selection procedure in a non-parametric setting with heteroscedastic errors; \cite{c.h.zhang:s.zhang} and  \cite{vandeGeerBuhlmannRitov2013} propose methods based on $\ell_1$-penalized estimation combined with ``one-step" correction in parametric models. Going beyond mean regression models, \cite{vandeGeerBuhlmannRitov2013} provides high level conditions for the one-step estimator applied to smooth generalized linear problems,  \cite{BelloniChernozhukovKato2013a} analyzes confidence regions for a parametric homoscedastic LAD regression model under primitive conditions based on the instrumental LAD regression, and \cite{BelloniChernozhukovWei2013} provides two post-selection procedures to build confidence regions for the logistic regression.    None of the aforementioned papers deal with the problem of the present paper.

Although related in spirit with
our previous work \cite{BelloniChernozhukovHansen2011,BelloniChernozhukovKato2013a,BelloniChernozhukovWei2013},
new tools and major departures from the previous works are required. First is the need to  accommodate the non-differentiability of the loss function (which translates into discontinuity of the score function) and the non-parametric setting. In particular, we establish new finite sample bounds for the prediction norm on the estimation error of $\ell_1$-penalized quantile regression in nonparametric models that extend results of \cite{BC-SparseQR,kato}. Perhaps more importantly, the use of post-selection methods in order to reduce bias and improve finite sample performance requires sparsity of the estimates. Although sharp sparsity bounds for $\ell_1$-penalized methods are available for smooth loss functions, those are not available for quantile regression precisely due to the lack of differentiability. This led us to developing sparse estimates with provable guarantees by suitable truncation while preserving the good rates of convergence despite of possible additional model selection mistakes. To handle heteroscedsaticity, which is a common feature in many applications, consistent estimation of the conditional density is necessary whose analysis is new in high-dimensions.
Those estimates are used as weights in the weighted Lasso estimation for the auxiliary regression ((\ref{Eq:indirect}) below). In addition, we develop new finite sample bounds for Lasso with estimated weights as the zero mean condition is not assumed to hold for each observation but rather to the average across all observations. Because the estimation of the conditional density function is at a slower rate, it affects penalty choices, rates of convergence, and sparsity of the Lasso estimates.

This work and some of the papers cited above achieve an important uniformity guarantee with respect to the (unknown) values of the parameters. These uniform properties translate into more reliable finite sample performance of the proposed inference procedures because they are robust with respect to (unavoidable) model selection mistakes. There is now substantial theoretical and empirical evidence on the potential poor finite sample performance of inference methods  that rely on perfect model selection when applied to models without separation from zero of the coefficients (i.e., small coefficients). Most of the criticism of these procedures are consequence of negative results established in \cite{LeebPotscher2005}, \cite{leeb:potscher:hodges}, and the references therein. 


\textbf{Notation.}
We work with triangular array data $\{ \omega_{i,n} : i=1,\dots,n; n=1,2,3,\dots\}$ where for each $n$, $\{ \omega_{n,i} ; i=1,\dots,n \}$ is defined on
 the probability space $(\Omega, \mathcal{S}, \Pr_n)$. Each  $\omega_{i,n}= (y_{i,n}', z_{i,n}', d_{i,n}')'$
is a vector which are  i.n.i.d., that is, independent across $i$ but not necessarily identically distributed. Hence all parameters that characterize the distribution of  $\{\omega_{i,n} : i=1,\dots,n\}$ are
implicitly indexed by $\Pr_n$ and thus by $n$.  We omit this dependence from the notation for the sake of simplicity.  We use $\En$ to abbreviate the notation $n^{-1}\sum_{i=1}^n$; for example, $\En[f] := \En[f(\omega_{i})] := n^{-1} \sum_{i=1}^n f(\omega_{i})$. We also use the following notation:
$\barEp[f] := \Ep \left [ \En[f] \right] =  \Ep \left [ \En[f(\omega_{i})] \right ]= n^{-1} \sum_{i=1}^n \Ep[f(\omega_{i})]$.
The $\ell_2$-norm is denoted by
$\|\cdot\|$; the $\ell_0$-``norm'' $\|\cdot\|_0$ denotes the number of non-zero components of a vector; and the $\ell_{\infty}$-norm $\| \cdot \|_{\infty}$ denotes the maximal absolute value in the components of a vector.
Given a vector $\delta \in \RR^p$ and a set of
indices $T \subset \{1,\ldots,p\}$, we denote by $\delta_T \in \RR^p$ the vector in which $\delta_{Tj} = \delta_j$ if $j\in T$ and $\delta_{Tj}=0$ if $j \notin T$.

\section{Setting}
\label{Sec:Model}

For a quantile index $\tau \in (0,1)$, we consider a partially linear conditional quantile model
\begin{equation}
\label{Eq:direct}
y_i = d_i\alpha_\tau + g_\tau(z_i) + \epsilon_i,   \ \  \tau\textrm{-quantile}(\epsilon_i\mid d_i, z_i) = 0, \ \ i=1,\ldots,n,
\end{equation}
where $y_i$ is the outcome variable, $d_i$ is the policy/treatment variable,  and confounding factors are represented by the variables $z_i$ which impact the equation through an unknown function $g_\tau$.  We shall use a large number $p$ of technical controls $x_i=X(z_i)$  to achieve an accurate approximation to the function $g_\tau$ in (\ref{Eq:direct}) which takes the form:
\begin{equation}
\label{ApproxSparse}
g_\tau(z_i) = x_i'\beta_\tau + r_{\gtau i}, \ \ i=1,\ldots,n, \
\end{equation} where $r_{\gtau i}$ denotes an approximation error. We view $\beta_\tau$ and $r_{\gtau}$ as nuisance parameters while the main parameter of interest is
$\alpha_\tau$ which describes the  impact of the treatment on the conditional quantile (i.e., quantile treatment effect).

In order to perform robust inference with respect to model selection mistakes, we construct a moment condition based on a score function that satisfies an additional orthogonality property that makes them immune to first-order changes in the value of the nuisance parameter. 
Letting $f_i = f_{\epsilon_i}(0\mid d_i, z_i)$ denote the conditional density at 0 of the disturbance term $\epsilon_i$ in (\ref{Eq:direct}), the construction of the orthogonal moment condition is based on the linear projection of the regressor of interest $d_i$ weighted by $f_i$ on the $x_i$ variables weighted by $f_i$
\begin{equation}
\label{Eq:indirect}
f_i d_i = f_i x_i'\theta_{0\tau} + v_i, \ \ \ i=1,\ldots,n, \ \ \barEp[f_ix_iv_i]=0,
\end{equation}
where $\theta_{0\tau} \in \arg\min \barEp[ f_i^2(d_i-x_i'\theta)^2]$. The orthogonal score function $\psi_i(\alpha):=(1\{y_i\leq d_i\alpha+ x_i'\beta_\tau + r_{\gtau}\}-\tau)v_i$ leads to a moment condition to estimate $\alpha_\tau$,
\begin{equation}
\label{Eq:Id}
\Ep[ (1\{y_i\leq d_i\alpha_\tau + x_i'\beta_\tau + r_{\gtau i}\} - \tau)v_i ]=0,
\end{equation}
and satisfies the following orthogonality condition with respect to first-order changes in the value of the nuisance parameters $\beta_\tau$ and $\theta_{0\tau}$:
\begin{equation}
\label{Eq:OrthoI}
\begin{array}{l}
\left.\partial_\beta \barEp[ (1\{y_i\leq d_i\alpha_\tau + x_i'\beta + r_{\gtau i}\} - \tau)f_i(d_i-x_i'\theta_{0\tau}) ] \right|_{\beta=\beta_\tau} = 0, \ \ \mbox{and}\\ \left.\partial_\theta \barEp[ (1\{y_i\leq d_i\alpha_\tau + x_i'\beta_\tau + r_{\gtau i}\} - \tau)f_i(d_i-x_i'\theta) ] \right|_{\theta=\theta_{0\tau}} = 0.
\end{array}
\end{equation}

In order to handle the high-dimensional setting, we assume that $\beta_\tau$ and $\theta_{0\tau}$ are approximately sparse, namely, it is possible to choose sparse vector $\beta_\tau$ and $\theta_\tau$ such that:
\begin{equation}
\| \theta_\tau\|_0 \leq s, \ \ \|\beta_\tau\|_0 \leq s,  \ \
\barEp [(x_i'\theta_{0\tau}-x_i'\theta_\tau)^2]\lesssim s/n \text{ and } \barEp [(g(z_i)-x_i'\beta_\tau)^2]\lesssim s/n.
\end{equation}
The latter equation requires that it is possible to choose the sparsity index $s$ so that the mean squared approximation error is of no larger order than the variance of the oracle estimator for estimating the coefficients in the approximation. See \cite{chen:Chapter} for a detailed discussion of this notion of approximate sparsity.

\subsection{Methods}
\label{Sec:KnownDensity}


The methodology based on the orthogonal score function (\ref{Eq:Id}) can be used for construction of many different estimators that have the same first-order asymptotic properties but potentially different finite sample behaviors. In the main part of the paper we present two such procedures in detail (the discussion on additional variants can be found in Subsection \ref{Sec:Variants} of the Supplementary Appendix). Our procedures use $\ell_1$-penalized quantile regression and $\ell_1$-penalized weighted least squares as intermediate steps (we collect the recommended choices of the user-chosen parameters in Remark \ref{Remark:Param} below).  The first procedure stated in Algorithm \ref{Alg:1} is based on the explicit construction of the orthogonal score function.

\begin{algorithm}\label{Alg:1}{\rm (Orthogonal score function.)} \\
\enspace \emph{Step 1}. Compute $(\hat\alpha_\tau,\hat\beta_\tau)$ from $\ell_1$-penalized quantile regression of $y$ on  $d$ and $x$.\\
\enspace \emph{Step 2}. Compute $(\widetilde\alpha_\tau,\widetilde\beta_\tau)$ from quantile regression of $y$ on  $d$ and $\{x_j : | \hat\beta_{\tau j} | \geq \lambda_\tau / \{\En[x_{ij}^2]\}^{1/2}\}$.\\
\enspace \emph{Step 3}. Estimate the conditional density $\hat f$ via (\ref{Eq:hatf}) or (\ref{Eq:hatfsecond}).\\
\enspace \emph{Step 4}. Compute $\widetilde\theta_\tau$ from the post-Lasso estimator of $\hat f d$ on $\hat f x$.\\
\enspace \emph{Step 5}. Construct the score function $\hat \psi_i(\alpha)= (\tau - 1\{y_i\leq d_i\alpha + x_i'\widetilde \beta_\tau\}) \hat f_i(d_i-x_i'\widetilde\theta_\tau)$. \\
\enspace \emph{Step 6}. For $L_n(\alpha) = |\En[\hat \psi_i(\alpha)]|^2/\En[\hat \psi_i^2(\alpha)]$, set $\check\beta_\tau = \widetilde\beta_\tau$  and $\check\alpha_\tau^{OS} \in \arg\min_{\alpha \in \mathcal{A}_\tau} L_n(\alpha)$.
\end{algorithm}

Step 6 of Algorithm \ref{Alg:1} solves the empirical analog of (\ref{Eq:Id}). We will show validity of the confidence regions for $\alpha_\tau$ defined in (\ref{Eq:InferenceDirect}) and (\ref{Def:Inv}). We note that the truncation in Step 2 for the solution of the penalized quantile regression (provably) induces a sparse solution with the same rate of convergence as the original estimator. This is required because the post-selection methods  exhibit better finite sample behaviors  in our simulations.

The second algorithm is based on selecting relevant variables from equations (\ref{Eq:direct}) and (\ref{Eq:indirect}), and running a weighted quantile regression.

\begin{algorithm}\label{Alg:2}{\rm (Weighted Double Selection.)} \\
\enspace \emph{Step 1}.  Compute $(\hat\alpha_\tau,\hat\beta_\tau)$ from $\ell_1$-penalized quantile regression of $y$ on  $d$ and $x$.\\
\enspace \emph{Step 2}. Estimate the conditional density $\hat f$ via (\ref{Eq:hatf}) or (\ref{Eq:hatfsecond}).\\
\enspace \emph{Step 3}. Compute $\hat\theta_\tau$ from the Lasso estimator of $\hat f d$ on $\hat f x$.\\
\enspace \emph{Step 4}. Compute $(\check\alpha_\tau^{DS},\check\beta_\tau)$ from quantile regression of $\hat f y$ on $\hat f d$ and $\{\hat fx_{j}:  j\in \supp(\hat \theta_\tau)\}\cup \{\hat fx_{j}: | \hat\beta_{\tau j} | \geq \lambda_\tau / \{\En[x_{ij}^2]\}^{1/2}\}$.
\end{algorithm}

Although the orthogonal score function is not explicitly constructed  in Algorithm \ref{Alg:2}, inspection of the proof reveals that an orthogonal score function is constructed implicitly via the optimality conditions of the weighted quantile regression in Step 4.

\begin{remark}[Choices of User-Chosen Parameters]\label{Remark:Param}
For  $\gamma = 0.05/n$, we set the penalty levels for the heteroscedastic Lasso and the $\ell_1$-penalized quantile regression as
\begin{equation}
\label{Parameter:lambda}
\lambda := 1.1\sqrt{n}2\Phi^{-1}(1-\gamma/2p) \quad \text{and} \quad \lambda_\tau := 1.1\sqrt{n\tau(1-\tau)}\Phi^{-1}(1-\gamma/2p).
\end{equation}
The penalty loading $\widehat \Gamma_\tau= \diag [ \hat \Gamma_{\tau jj} , j =1,\dots,p]$
 is a diagonal matrix defined by the following procedure: (1) Compute the post-Lasso estimator $\widetilde \theta_\tau^{0}$ based on $\lambda$ and initial values $\hat \Gamma_{\tau jj}  =   \max_{1 \leq i\leq n} \|\hat f_i x_i\|_\infty \{\En[\hat f_i^2d_i^2]\}^{1/2}$. (2)  Compute the residuals $\widehat v_i = \hat f_i(d_i - x_i'\widetilde\theta_\tau^{0})$ and update
\begin{equation}
\label{Parameter:Gamma}
\hat \Gamma_{\tau jj}  =   \sqrt{\En[\hat f_i^2 x^2_{ij} \widehat v_i^2]}, \ j=1,\ldots,p.
\end{equation}
In Algorithm \ref{Alg:1} we have used the following parameter space for $\alpha$:
\begin{equation}
\label{Parameter:Atau}
\mathcal{A}_\tau = \{ \alpha \in \RR : |\alpha - \widetilde \alpha_\tau| \leq 10\{\En[d_i^2]\}^{-1/2} / \log n \}.
\end{equation}
\end{remark}

\begin{remark}[Estimating Standard Errors]
There are different possible choices of estimators for $\sigma_n$:
\begin{equation}
\label{Def:EstSigma}
\begin{array}{l}
\hat\sigma_{1n}^2 :=\tau(1-\tau) \left ( \En[\widetilde v_i^2] \right )^{-1}, \quad \hat\sigma_{2n}^2 := \tau(1-\tau) \left [ \{ \En[\hat f_i^2( d_i, x_{i\check T}')'(d_i,x_{i\check T}')] \}^{-1} \right ]_{11}, \\
\hat\sigma_{3n}^2 := \left ( \En[\hat f_i d_i \widetilde v_i]\right)^{-2}\En[(1\{y_i\leq d_i\check\alpha_\tau+x_i'\check\beta_\tau\}-\tau)^2\widetilde v_i^2],
\end{array}
\end{equation}
where $\check T = \supp(\check \beta_\tau) \cup \supp(\hat \theta_\tau)$ is the set of controls used in the double selection quantile regression. Although all three estimates are consistent under similar regularities conditions, their finite sample behaviors might differ. Based on the small-sample performance in computational experiments, we recommend the use of $\hat\sigma_{3n}$ for the orthogonal score estimator and $\hat\sigma_{2n}$ for the double selection estimator.
\end{remark}

\subsection{Estimation of Conditional Density Function}\label{Comment:EstDensity}

The implementation of the algorithms in Section \ref{Sec:KnownDensity} requires an estimate of the conditional density function $f_i$ which is typically unknown under heteroscedasticity.
Following \cite{koenker:book},  we shall use the observation that $1/ f_i = \partial Q(\tau\mid d_i, z_i)/\partial \tau$ to estimate $f_i$ where $Q(\cdot\mid d_i,z_i)$ denotes the conditional quantile function of the outcome. Let  $\hat Q(u\mid z_i,d_i)$ denote an estimate of the conditional $u$-quantile function $Q(u\mid z_i,d_i)$,
 based on  either $\ell_1$-penalized quantile regression or an associated post-selection method,  and let $h =h_n \to 0$ denote a bandwidth parameter. Then an estimator of $f_i$ can be constructed as
\begin{equation}
\label{Eq:hatf} \hat f_i = \frac{2h}{\hat Q(\tau+h\mid z_i,d_i)-\hat Q(\tau-h\mid z_i,d_i)}.
\end{equation}
When the conditional quantile function is three times continuously differentiable, this estimator is based on the first order partial difference of the estimated conditional quantile function, and so it has the bias of order $h^2$. Under additional smoothness assumptions, an estimator that has a  bias of order $h^4$ is given by
{\small
\begin{equation}
\label{Eq:hatfsecond}
\hat f_{i} = \frac{h}{ \frac{3}{4}\{\hat Q(\tau+h\mid z_i,d_i)-\hat Q(\tau-h\mid z_i,d_i)\} - \frac{1}{12} \{\hat Q(\tau+2h\mid z_i,d_i)-\hat Q(\tau-2h\mid z_i,d_i)\} }.
\end{equation}
}

\begin{remark}[Implementation of the estimates $\hat f_i$]\label{Implementation:hatf}
There are several possible choices of tuning parameters to construct the estimates $\hat f_i$. In particular the bandwidth choices set in the R package `quantreg' from \cite{koenker2016quantreg} exhibits good empirical behavior. In our theoretical analysis we coordinate the bandwidth choice with the choice of the penalty level of the density weighted Lasso.  In Subsection \ref{Sec:Bandwidth} of the Supplementary Appendix we discuss in more detail the requirements associated with different choices for penalty level $\lambda$ and bandwidth $h$. Together with the recommendations made in  Remark \ref{Remark:Param}, we suggest to construct $\hat f_i$ as in (\ref{Eq:hatf}) with bandwidth $h:= \min\{ n^{-1/6}, \tau(1-\tau)/2\}$.
\end{remark}

\section{Theoretical Analysis}
\label{Sec:MainResults}

\subsection{Regularity Conditions}

In this section we provide regularity conditions that are sufficient for validity of the main estimation and inference results. In what follows, let $c,C$, and $q$ be given (fixed) constants with $c > 0, C \geq 1$ and $q \geq 4$, and let $\ell_n \uparrow \infty, \delta_n \downarrow 0$, and $\Delta_n \downarrow 0$ be given sequences of positive constants. We  assume that  the following condition holds for the data generating process $\Pr = \Pr_n$ for each $n$.

\textbf{Condition AS($\Pr$)}. \textit{(i) Let $\{(y_i,d_i,x_i=X(z_i)) : i=1,\ldots,n\}$ be independent random vectors that obey the model described in (\ref{Eq:direct}) and (\ref{Eq:indirect}) with $\|\theta_{0\tau}\|+\|\beta_\tau\|+|\alpha_\tau|\leq C$. (ii) There exists $s \geq 1$ and vectors $\beta_{\tau}$ and $\theta_\tau$ such that $x_i'\theta_{0\tau} = x_i' \theta_{\tau} + r_{\mtau i}, \ \|\theta_{\tau}\|_0 \leq s, \   \barEp [r_{\mtau i}^2] \leq C s/n$, $\|\theta_{0\tau}-\theta_\tau\|_1 \leq s\sqrt{\log(pn)/n}$, and $g_\tau(z_i) = x_i' \beta_{\tau} + r_{\gtau i}, \  \|\beta_\tau\|_0 \leq s,  \  \barEp[r_{\gtau i}^2] \leq  C s/n$.
(iii) The conditional distribution function of $\epsilon_i$ is absolutely continuous with continuously differentiable density $f_{\epsilon_i\mid d_i, z_i}(\cdot\mid d_i,z_i)$ such that $0<\underline{f} \leq f_i \leq \sup_t f_{\epsilon_i\mid d_i, z_i}(t\mid d_i,z_i) \leq \bar f \leq C$ and $\sup_{t} |f_{\epsilon_i\mid d_i, z_i}'(t\mid d_i, z_i)|\leq \bar f'\leq C$.}

Condition AS(i) imposes the setting discussed in Section \ref{Sec:Model} in which the  error term $\epsilon_i$ has zero conditional $\tau$-quantile. The approximate sparsity on the high-dimensional parameters is stated in Condition AS(ii). Condition AS(iii) is a standard assumption on the conditional density function in the quantile regression literature (see \cite{koenker:book}) and the instrumental quantile regression literature (see \cite{ch:iqrWeakId}). Next we summarize the moment conditions we impose.

\textbf{Condition M($\Pr$)}.  \textit{(i) We have $\barEp[\{(d_{i},x_{i}')\xi\}^2] \geq c\|\xi\|^2$ and $\barEp[\{(d_i,x_i')\xi\}^4] \leq C\|\xi\|^4$ for all $\xi \in \RR^{p+1}$, $c \leq \min_{1 \leq j\leq p} \barEp[ |f_ix_{ij}v_i-\Ep[f_ix_{ij}v_i]|^2]^{1/2} \leq \max_{1\leq j\leq p}  \barEp[|f_i x_{ij} v_i|^3]^{1/3} \leq C$. (ii) The approximation error satisfies $|\barEp[f_iv_ir_{\gtau i}]|\leq \delta_nn^{-1/2}$ and $\barEp[(x_i'\xi)^2r_{\gtau i}^2]\leq C\|\xi\|^2 \barEp[r_{\gtau i}^2]$ for all $\xi \in \RR^{p}$. (iii) Suppose that $K_q = \Ep[\max_{1 \leq i \leq n}\|(d_i,v_i,x_i')' \|_\infty^q]^{1/q}$ is finite and satisfies $(K_q^2s^2 + s^3)\log^3(pn) \leq n\delta_n$ and $K_q^4s\log(pn)\log^3n\leq \delta_nn$.} 

Condition M(i) imposes moment conditions on the variables. Condition M(ii) imposes requirements on the approximation error. Condition M(iii) imposes growth conditions on $s$, $p$, and $n$. In particular these conditions imply that the population eigenvalues of the design matrix are bounded away from zero and from above. They ensure that sparse eigenvalues and restricted eigenvalues are well behaved which are used in the analysis of penalized estimators and sparsity properties needed for the post-selection estimator.

\begin{remark}[Handling Approximately Sparse Models]
\label{RemarkExtraMoment}
To handle approximately sparse models to represent $g_\tau$ in (\ref{ApproxSparse}), we assume a near orthogonality between  $r_{\gtau}$ and $f v$, namely $\barEp[ f_i v_i r_{\gtau i} ] = o(n^{-1/2})$.
This condition is automatically satisfied if the orthogonality condition in (\ref{Eq:indirect}) can be strengthen to $\Ep[f_iv_i\mid z_i]=0$. However, it can be satisfied under weaker conditions as discussed in Subsection \ref{DiscussionNearOrtho} of the Supplementary Appendix.
\end{remark}

Our last set of conditions pertains to the estimation of the conditional density function $(f_i)_{i=1}^n$ which has a non-trivial impact on the analysis. We denote by $\mathcal{U}$ the finite set of quantile indices used in the estimation of the conditional density. Under mild regularity conditions the estimators (\ref{Eq:hatf}) and (\ref{Eq:hatfsecond}) achieve
\begin{equation}
\label{Ratef}
\hat f_i - f_i = O\left(h^{\bar k} + \frac{1}{h}\sum_{u\in \mathcal{U}} |\hat Q(\tau+u\mid d_i,z_i)-\hat Q(\tau-u\mid d_i,z_i)|\right),
\end{equation}
where $\bar k = 2$ for (\ref{Eq:hatf}) and $\bar k  =4$ for (\ref{Eq:hatfsecond}).
Condition D summarizes sufficient conditions to account for the impact of density estimation via (post-selection) $\ell_1$-penalized quantile regression estimators.

{\bf Condition D.} \textit{For $u \in \mathcal{U}$, assume that $u\textrm{-quantile}(y_i\mid z_i, d_i) = d_i\alpha_u + x_i'\beta_u + r_{ui}$, $f_{ui}=f_{y_i\mid d_i,z_i}(d_i\alpha_u + x_i'\beta_u + r_{ui} \mid  z_i,d_i)\geq c$  where  $\barEp[r_{u i}^2] \leq \delta_n n^{-1/2}$ and $|r_{ui}|\leq \delta_nh$ for all $i=1,\ldots,n$,  and the vector $\beta_u$ satisfies $\|\beta_u\|_0\leq s$.  (ii) For $\widetilde s_\mtau = s+\frac{ns\log (n\vee p)}{h^2\lambda^2}+ \left(\frac{nh^{\bar k}}{\lambda}\right)^2$, suppose $h^{\bar k}\sqrt{\widetilde s_{\mtau}\log(pn) }\leq \delta_n$,  $h^{-2}K_q^2s\log(pn)\leq \delta_n n$, $\lambda K_q^2\sqrt{s} \leq \delta_n n$, $h^{-2}s\widetilde s_\mtau \log(pn) \leq \delta_n n$, $\lambda\sqrt{s \widetilde s_\mtau \log(pn)} \leq \delta_n n$,  and $K_q^2\widetilde s_{\mtau}\log^2(pn)\log^3(n) \leq \delta_nn$.}

Condition D(i) imposes the approximately sparse assumption for the $u$-conditional quantile function for quantile indices $u$ in a neighborhood of the quantile index $\tau$. Condition D(ii) provides growth conditions relating $s$, $p$, $n$, $h$ and $\lambda$. Subsection \ref{Sec:Bandwidth} in the Supplementary Appendix discusses specific choices of penalty level $\lambda$ and of bandwidth $h$ together with the implied conditions on the triple $(s,p,n)$. In particular they imply that sparse eigenvalues of order $\widetilde s_{\mtau}$ are well behaved.

\subsection{Main results} In this section we state our theoretical results. We establish the first order equivalence of the proposed estimators. We construct the estimators as defined as in Algorithm \ref{Alg:1} and \ref{Alg:2} with parameters $\lambda_\tau$ as in (\ref{Parameter:lambda}), $\widehat \Gamma_\tau$ as in (\ref{Parameter:Gamma}), and $\mathcal{A}_\tau$ as in (\ref{Parameter:Atau}). The choices of $\lambda$ and $h$ satisfy Condition D.

\begin{theorem}\label{theorem:inferenceAlg1prime}
Let $\{\Pr_n\}$ be a sequence of data-generating processes.  Assume that conditions $\mathrm{AS} (\Pr)$, $\mathrm{M} (\Pr)$ and $\mathrm{D} (\Pr)$ are satisfied with $\Pr = \Pr_n$ for each $n$.  Then the orthogonal score estimator $\check \alpha_\tau^{OS}$ based on Algorithm \ref{Alg:1} and the double selection estimator $\check \alpha_\tau^{DS}$ based on Algorithm \ref{Alg:2} are first order equivalent, $\sqrt{n}(\check \alpha_\tau^{OS}-\check \alpha_\tau^{DS})=o_P(1).$ Moreover, either estimator  satisfies
$$
\sigma_n^{-1} \sqrt{n} (\check \alpha_\tau - \alpha_\tau) = \mathbb{U}_n(\tau) + o_P(1) \quad \mbox{and} \quad  \mathbb{U}_n(\tau) \rightsquigarrow N(0,1),
$$
where $\sigma^2_n=   \tau(1-\tau)\barEp [v_i^2]^{-1}$ and
$\mathbb{U}_n(\tau) := \tau(1-\tau) \{ \barEp[v_i^2]\}^{-1/2} n^{-1/2} \sum_{i=1}^n ( \tau - 1\{U_i\leq \tau\})v_{i}$, and $U_1, \ldots, U_n$ are i.i.d. uniform random variables on $(0,1)$ independent  from $v_1,\ldots, v_n$. Furthermore,
$$
nL_n(\alpha_\tau) = \mathbb{U}_n^2(\tau)+ o_P(1) \quad \mbox{and} \quad \mathbb{U}_n^2(\tau)\rightsquigarrow \chi^2(1).
$$
The result continues to apply if $\sigma^2_n$ is replaced by any of the estimators in (\ref{Def:EstSigma}), namely, $\hat\sigma_{kn}/\sigma_n = 1 + o_\Pr(1)$ for $k=1,2,3$.
\end{theorem}

The asymptotically correct coverage of the confidence regions $\mathcal{C}_{\xi,n}$  and $\mathcal{I}_{\xi,n}$ as defined in (\ref{Eq:InferenceDirect}) and (\ref{Def:Inv}) follows immediately. Theorem \ref{theorem:inferenceAlg1prime} relies on post model selection estimators which in turn rely on achieving sparse estimates $\hat \beta_\tau$ and $\hat\theta_\tau$. The sparsity of $\hat\theta_\tau$ is derived in Section \ref{Sec:EstLasso} in the Supplemental Appendix under the recommended penalty choices. The sparsity of $\hat \beta_\tau$ is not guaranteed under the recommended choices of penalty level $\lambda_\tau$ which leads to sharp rates. We bypass that by truncating small components to zero (as in Step 2 of Algorithm \ref{Alg:1}) which (provably) preserves the same rate of convergence and ensures the sparsity.

In addition to the asymptotic normality, Theorem \ref{theorem:inferenceAlg1prime} establishes that the rescaled estimation error $\sigma_n^{-1}\sqrt{n}(\check\alpha_\tau - \alpha_\tau)$ is approximately equal
to the process $\mathbb{U}_n(\tau)$, which is pivotal conditional on $v_1,\ldots, v_n$. Such a property is very useful since it is easy to simulate $\mathbb{U}_n(\tau)$ conditional on $v_1,\ldots, v_n$. Thus this representation provides us with another procedure to construct confidence intervals without relying on asymptotic normality which are useful for the construction of simultaneous confidence bands; see Section \ref{Sec:Multiple}.

Importantly, the results in Theorem \ref{theorem:inferenceAlg1prime}  allow for the data generating process to depend on the sample size $n$ and have no requirements on the separation from zero of the coefficients. In particular these results allow for sequences of data generating processes for which perfect model selection is not possible. In turn this translates into uniformity properties over a large class of data generating processes. Next we formalize these uniform properties. We let $\mathcal{P}_{n}$ denote the collection of distributions $\Pr$ for the data $\{ (y_{i},d_{i}, z_i')' \}_{i=1}^{n}$
 such that Conditions AS, M and D are satisfied for given $n$.  This is the collection of all approximately sparse models where the above sparsity conditions, moment conditions, and growth conditions are satisfied.
Note that the uniformity results for the approximately sparse and heteroscedastic case are new even under fixed $p$ asymptotics.



\begin{corollary}[\textbf{Uniform Validity of Confidence Regions}]
\label{cor:Uniformity22}
Let $\mathcal{P}_{n}$ be the collection of all distributions of $\{ (y_{i},d_{i},z_i')' \}_{i=1}^{n}$ for which Conditions $\mathrm{AS}$, $\mathrm{M}$, and $\mathrm{D}$ are  satisfied for  given $n \geq 1$. Then the confidence regions  $\mathcal{C}_{\xi,n}$ and $\mathcal{I}_{\xi,n}$ defined based on either the orthogonal score estimator or by the double selection estimator are asymptotically uniformly valid 
$$
\lim_{n \to \infty} \sup_{\Pr \in \mathcal{P}_n} |\Pr ( \alpha_\tau \in \mathcal{C}_{\xi,n} ) - (1- \xi)| =0
\quad  \mbox{and} \quad
\lim_{n \to \infty} \sup_{\Pr \in \mathcal{P}_n} |\Pr ( \alpha_\tau \in \mathcal{I}_{\xi,n} ) - (1- \xi)| =0.
$$
\end{corollary}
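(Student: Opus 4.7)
The plan is to reduce the uniform validity of the two confidence regions to three ingredients: (i) the uniform asymptotic normality of $\check\alpha_\tau$ already furnished by Corollary \ref{cor:Uniformity11}; (ii) a uniform $\chi^2(1)$ limit for $n L_n(\alpha_\tau)$ inherited from the pointwise-along-sequences statements in Theorems \ref{theorem:inferenceAlg1} and \ref{theorem:inferenceAlg2}; and (iii) a uniform consistency statement $\hat\sigma_n^2/\sigma_n^2 = 1 + o_P(1)$. Throughout, the bridge from ``along any sequence $\Pr_n \in \mathcal{P}_n$'' to ``uniformly over $\mathcal{P}_n$'' is the standard subsequence/contradiction device: if the supremum in the corollary did not vanish, one could extract a sequence $\Pr_n \in \mathcal{P}_n$ along which convergence failed, contradicting the sequence-level conclusions that hold under AS($\Pr_n$) and SE($\Pr_n$).

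For $\mathcal{C}_{\xi,n}$, I rewrite the event $\{\alpha_\tau \in \mathcal{C}_{\xi,n}\}$ as $\{\hat\sigma_n^{-1}\sqrt n|\check\alpha_\tau - \alpha_\tau| \leq \Phi^{-1}(1-\xi/2)\}$. Along any sequence $\Pr_n \in \mathcal{P}_n$, item (iii) gives $\hat\sigma_n/\sigma_n \to 1$ in probability, while Corollary \ref{cor:Uniformity11} gives $\sigma_n^{-1}\sqrt n(\check\alpha_\tau - \alpha_\tau) \rightsquigarrow N(0,1)$; Slutsky then yields $\hat\sigma_n^{-1}\sqrt n(\check\alpha_\tau - \alpha_\tau) \rightsquigarrow N(0,1)$, and continuous mapping on the symmetric interval delivers $\Pr(\alpha_\tau \in \mathcal{C}_{\xi,n}) \to 1 - \xi$. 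The subsequence argument promotes this to uniform convergence.

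For $\mathcal{I}_{\xi,n}$ the argument is cleaner: Theorems \ref{theorem:inferenceAlg1} and \ref{theorem:inferenceAlg2} deliver, along any sequence $\Pr_n \in \mathcal{P}_n$, the expansion $n L_n(\alpha_\tau) = \mathbb{U}_n^2(\tau) + o_P(1)$ with $\mathbb{U}_n(\tau) \rightsquigarrow N(0,1)$ via a triangular-array Lindeberg CLT, whose Lindeberg condition is verified uniformly from the 8th moment bound on $v_i$ and the lower bound $c \leq \Ep[v_i^2\mid z_i]$ in AS($\Pr$)(iv). Consequently $n L_n(\alpha_\tau) \rightsquigarrow \chi^2(1)$ uniformly, and the same subsequence device gives $\sup_{\Pr \in \mathcal{P}_n}|\Pr(\alpha_\tau \in \mathcal{I}_{\xi,n}) - (1-\xi)| \to 0$.

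The main obstacle is (iii), uniform consistency of the variance estimator. For the recommended $\hat\sigma_{2n}^2$ and $\hat\sigma_{3n}^2$ I would decompose $\hat\sigma_n^2 - \sigma_n^2$ into a sampling-fluctuation piece controlled by Chebyshev using the uniform 8th moment bounds of AS($\Pr$)(iv), and a plug-in-error piece controlled by $\hat f_i - f_i$ at the rate (\ref{Ratef}) together with the prediction-norm rate for $\widetilde v_i - v_i$ coming from the weighted post-Lasso step; the cardinality of $\check T$ is controlled by the sparsity results invoked in Theorems \ref{theorem:inferenceAlg1} and \ref{theorem:inferenceAlg2}. This bookkeeping is routine once these building blocks are in hand, and the bounds hold uniformly because every moment constant, sparsity rate, and eigenvalue bound in AS and SE is taken to be absolute, hence uniform on $\mathcal{P}_n$ by the definition of the class.
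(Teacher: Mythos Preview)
Your approach is correct and is precisely the one the paper intends: the theorems are stated under array asymptotics for arbitrary sequences $\Pr_n \in \mathcal{P}_n$, and the subsequence/contradiction device you describe is exactly the bridge to uniformity (the paper offers no separate proof of the corollaries, treating them as immediate). Two small remarks. First, this corollary sits in the known-density subsection (only AS and SE are assumed, not D), so $\hat f_i = f_i$ and the $\hat f_i - f_i$ term in your variance-consistency sketch drops out; the argument for $\hat\sigma_n^2/\sigma_n^2 \to_P 1$ reduces to controlling $\En[\widetilde v_i^2] - \barEp[v_i^2]$ via the prediction-norm rate $\|\widetilde v_i - v_i\|_{2,n} \lesssim_P \sqrt{s\log(p\vee n)/n}$ plus a law-of-large-numbers term, both uniform under AS(iv) and SE. Second, for $\mathcal{I}_{\xi,n}$ you should also note that $\Pr(\alpha_\tau \in \mathcal{A}_\tau) \to 1$ uniformly, which follows from the consistency of $\widetilde\alpha_\tau$ established in Step~1 of the proofs of Theorems~\ref{theorem:inferenceAlg1}--\ref{theorem:inferenceAlg2} and the definition of $\mathcal{A}_\tau$ in (\ref{Parameter:Atau}); otherwise $\{\alpha_\tau \in \mathcal{I}_{\xi,n}\}$ is not equivalent to $\{nL_n(\alpha_\tau) \leq q_{1-\xi}\}$.
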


\subsection{Simultaneous Inference over $\tau$ and Many Coefficients}\label{Sec:Multiple}

In some applications we are interested on building confidence intervals that are simultaneously valid for many coefficients as well as for a range of quantile indices $\tau \in \mathcal{T}\subset (0,1)$ a fixed compact set. The proposed methods directly extend to the case of $d \in \RR^{K}$ and $\tau \in \mathcal{T}$
$$
\tau\textrm{-quantile}(y\mid z, d) = \sum_{j=1}^K d_j\alpha_{\tau j} + \tilde g_\tau(z).
$$
Indeed, for each $\tau \in \mathcal{T}$ and each $k=1,\ldots,K$, estimates can be obtained by applying the methods to the model (\ref{Eq:direct}) as
$$
\tau\textrm{-quantile}(y\mid z, d) = d_k\alpha_{\tau k} + g_\tau(z) \ \ \mbox{where} \ \  g_\tau(z) := \tilde g_\tau(z) + \sum_{j\neq k} d_j\alpha_{\tau j}.
$$
For each $\tau \in \mathcal{T}$, Step 1 and the conditional density function $f_i$, $i=1,\ldots,n$, are the same for all $k=1,\ldots,K$. However, Steps 2 and 3 adapt to each quantile index and each coefficient of interest. The uniform validity of $\ell_1$-penalized methods for a continuum of problems (indexed by $\mathcal{T}$ in our case) has been established for quantile regression in \cite{BC-SparseQR} and for least squares in \cite{BCFH2013program}. The conclusions of Theorem \ref{theorem:inferenceAlg1prime}  are uniformly valid over $k=1,\ldots,K$ and $\tau \in \mathcal{T}$ (in the $\ell_\infty$-norm).


Simultaneous confidence bands are constructed by defining the following critical value
$$
c^*(1-\xi) = \inf\left\{ t: \Pr\left( \sup_{\tau \in \mathcal{T},k=1,\ldots,K} |\mathbb{U}_n(\tau,k)| \leq t \mid \{d_i,z_i\}_{i=1}^n\right) \geq
1-\xi \right\},
$$
where the random variable $\mathbb{U}_n(\tau,k)$ is pivotal conditional on the data, namely,
$$
\mathbb{U}_n(\tau,k) := \frac{\{\tau(1-\tau)\barEp[v_{\tau k i}^2]\}^{-1/2}}{\sqrt{n}} \sum_{i=1}^n ( \tau - 1\{U_i\leq \tau\})v_{\tau k i},
$$
where $U_i$ are i.i.d. uniform random variables on $(0,1)$ independent from $\{d_i,z_i\}_{i=1}^n$, and $v_{\tau k i}$ is the error term in the decomposition (\ref{Eq:indirect}) for the pair $(\tau,k)$. Therefore $c^*(1-\xi)$ can be estimated since estimates of $v_{\tau k i}$ and $\sigma_{n\tau k}$, $\tau \in \mathcal{T}$ and $k=1,\ldots,K$, are available. Uniform confidence bands can be defined as
$$
[ \check \alpha_{\tau k} - \sigma_{n\tau k}c^*(1-\xi)/\sqrt{n},  \check \alpha_{\tau k} + \sigma_{n\tau k}c^*(1-\xi)/\sqrt{n} ] \ \ \mbox{for} \ \ \tau \in \mathcal{T}, \ k = 1,\ldots,K.
$$

\section{Empirical Performance}


\subsection{Monte-Carlo Experiments}

Next we provide a simulation study to assess the finite sample performance of the proposed estimators and confidence regions. We focus our discussion on the double selection estimator as defined in Algorithm \ref{Alg:2} which exhibits a better performance. We consider the median regression case ($\tau =1/2$) under the  following data generating process:
\begin{align}
&y = d\alpha_\tau + x'(c_y\nu_0) + \epsilon, \quad  \epsilon \sim N(0,\{2-\mu+ \mu d^2\}/2),\label{direct} \\
&d = x'(c_d\nu_0) + \tilde v, \quad \tilde v \sim N(0,1),
\end{align}
where $\alpha_\tau = 1/2$, $\theta_{0j} = 1/j^2, j=1,\ldots,p$, $x = (1,z')'$ consists of an intercept and covariates $z \sim N(0,\Sigma)$, and the errors $\epsilon$ and $\tilde v$ are independent. The dimension $p$ of the covariates $x$ is $300$, and the sample size $n$ is $250$.  The regressors are correlated with $\Sigma_{ij} = \rho^{|i-j|}$ and $\rho = 0.5$. In this case, $f_i= 1/\{\sqrt{\pi(2-\mu +\mu d^2)}\}$ so that the coefficient $\mu \in \{0, 1\}$ makes the conditional density function of $\epsilon$ homoscedastic if $\mu = 0$ and heteroscedastic if $\mu = 1$. The coefficients $c_y$ and $c_d$ are used to control the $R^2$ in the
 equations: $y - d\alpha_\tau = x'(c_y\nu_0) + \epsilon$ and $ d = x'(c_d\nu_0) + \tilde v$ ; we denote the values of $R^2$ in each equation by $R^2_y$ and $R_d^2$. We consider values $(R^2_y, R^2_d)$ in the set $\{0, .1, .2, \ldots, .9\}\times \{0, .1, .2, \ldots, .9\}$. Therefore we have 100 different designs and perform $500$ Monte-Carlo repetitions for each design. For each repetition we draw new vectors $x_i$'s and errors $\epsilon_i$'s and $\tilde v_i$'s. 

We perform estimation of $f_i$'s via  (\ref{Eq:hatf}) even in the homoscedastic case ($\mu=0$), since we do not want to rely on whether the assumption of homoscedasticity is valid or not. We use $\hat \sigma_{2n}$ as the standard error estimate  for the post double selection estimator based on Algorithm \ref{Alg:2}. As a benchmark we consider the standard (naive) post-selection procedure that applies $\ell_1$-penalized median regression of $y$ on $d$ and $x$ to select a subset of covariates that have predictive power for $y$, and then runs median regression of $y$ on $d$ and the selected covariates, omitting the covariates that were not selected. We report the rejection frequency of the confidence intervals with the nominal coverage probability of $95\%$.  Ideally we should see the rejection rate of $5\%$, the nominal level, regardless of the underlying generating process $\Pr \in \mathcal{P}_n$.  This is the so called uniformity property or honesty property of the confidence regions (see, e.g., \cite{RomanoWolf2007}, \cite{RomanoShaikh20012}, and \cite{LeebPotscher2006}).

In the homoscedastic case, reported on the left column of Figure \ref{Fig:SimFirst}, we have the empirical rejection probabilities for the naive post-selection procedure on the first row.  These empirical rejection probabilities deviate strongly away from the nominal level of $5\%$, demonstrating the striking lack of robustness of this standard method. This is perhaps expected due to the Monte-Carlo design having regression coefficients not well separated from zero (that is,  the ``beta min" condition does not hold here). In sharp contrast, we see that the proposed procedure performs substantially better, yielding empirical rejection probabilities close to the desired nominal level of $5\%$. In the right column of Figure \ref{Fig:SimFirst}  we report the results for  the heteroscedastic case ($\mu = 1$). Here too we see the striking lack of robustness of the naive post-selection procedure. We also see that the confidence region based on the post-double selection method significantly outperforms the standard method, yielding empirical rejection probabilities close to the nominal level of $5\%$.

\begin{figure}[h!]
\includegraphics[width=0.4\textwidth]{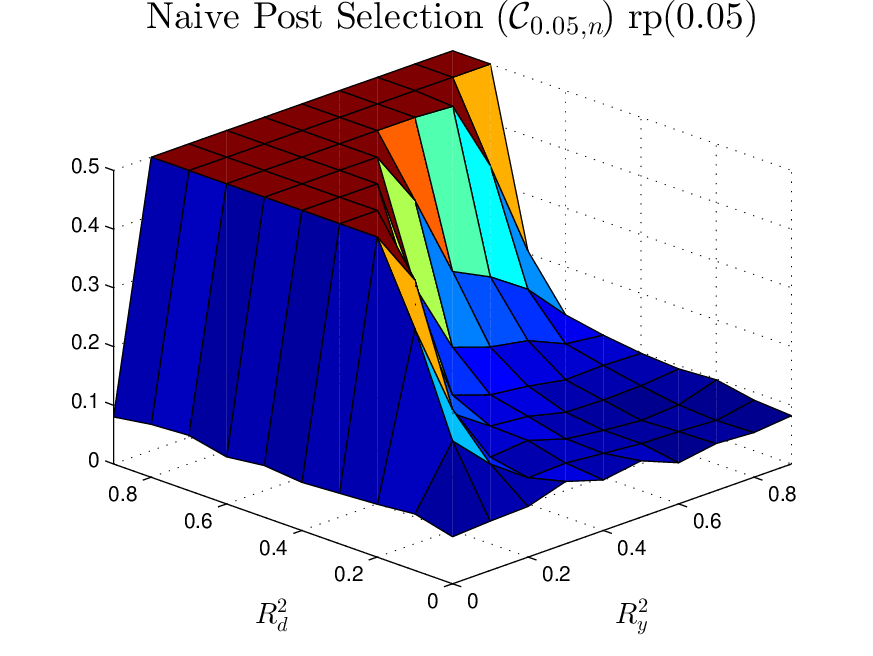}
\includegraphics[width=0.4\textwidth]{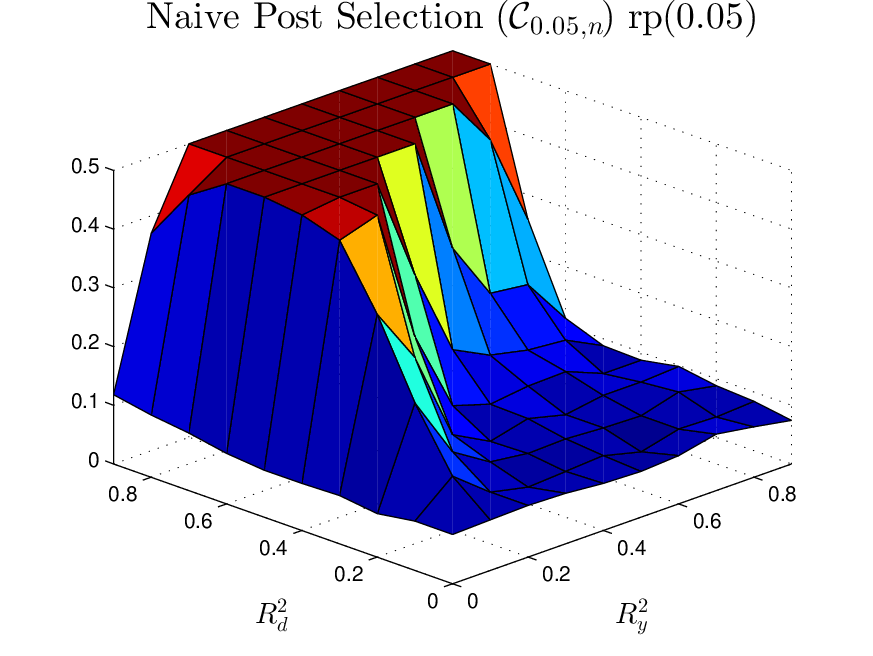}
\includegraphics[width=0.4\textwidth]{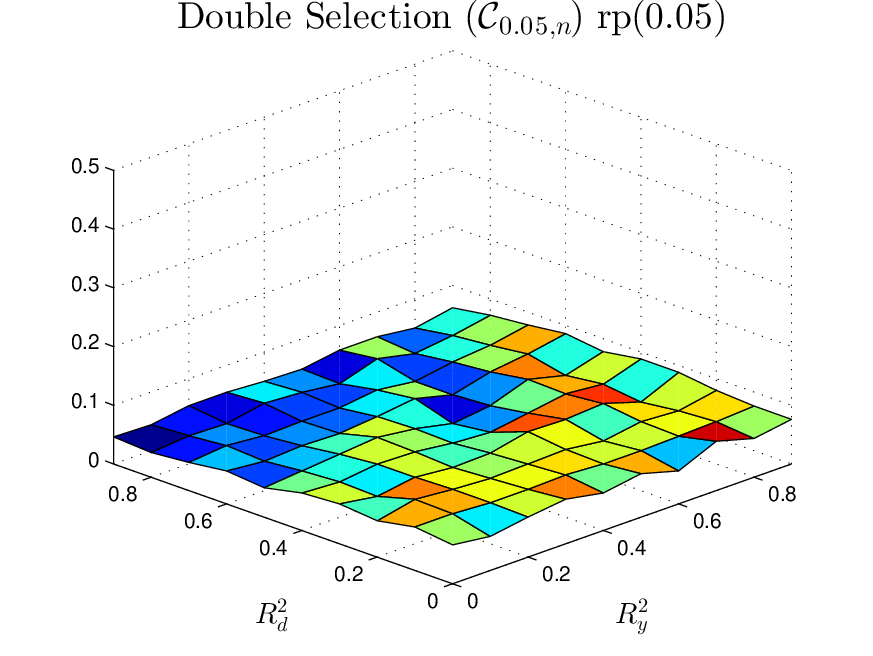} \hspace{3.25cm}
\includegraphics[width=0.4\textwidth]{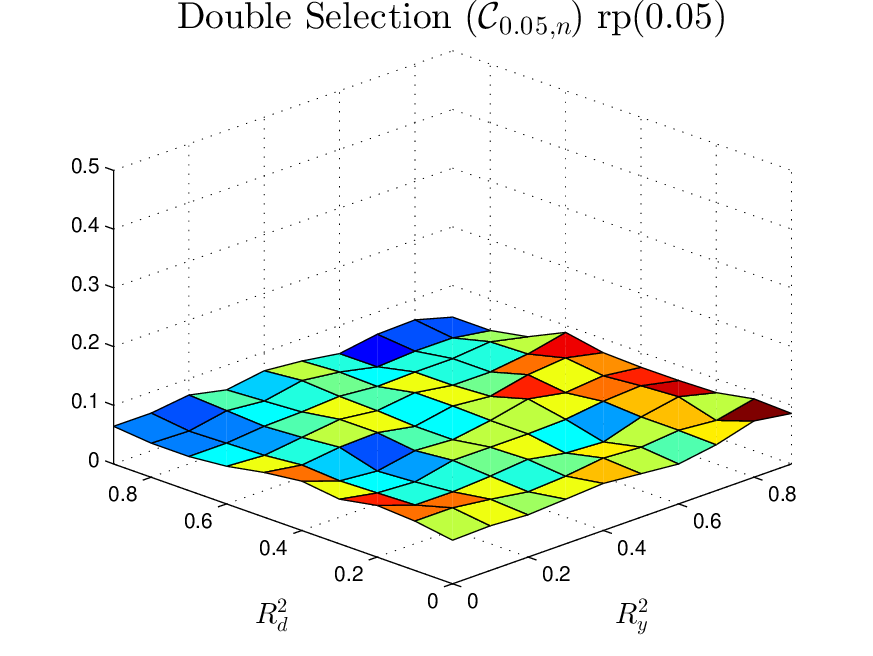}
\caption{\footnotesize  The left column is the homoscedastic design ($\mu = 0$), and the right column is the heteroscedastic design. We the figure displays the rejection probabilities of the following
 confidence regions with nominal coverage of $95\%$: (a) naive selection procedure (1st row),  and (b)  $\mathcal{C}_{0.05,n}$ based on the post double selection estimator (2nd row). The ideal rejection probability should be $5\%$, so ideally we should be seeing a
 flat surface with height $5\%$.}\label{Fig:SimFirst}
\end{figure}

\subsection{Inference on Risk Factors in Childhood Malnutrition}

The purpose of this section is to examine practical usefulness of the new methods and contrast them with the standard post-selection inference (that assumes perfect selection).

We will assess statistical significance of socio-economic and biological factors on children's malnutrition, providing a methodological follow up on the previous studies done by \cite{FKH2011} and \cite{Koenker2011}.  The measure of malnutrition is represented by the child's height, which will be our response variable $y$. The socio-economic and biological factors will be our regressors $x$, which we shall describe in more detail below.  We shall estimate the conditional first decile function of the child's height given the factors (that is, we set $\tau=.1$).   We would like to perform inference on the size of the impact of the various factors on the conditional decile of the child's height.  The problem has material significance, so it is important to conduct statistical inference for this problem responsibly.

The data comes originally from the Demographic and Health Surveys (DHS) conducted regularly in more than 75 countries; we employ the same selected sample of 37,649 as in Koenker (2012). All children in the sample are between the ages of 0 and 5.  The response variable $y$ is the child's height in centimeters.  The regressors $x$ include child's age, breast feeding in months, mothers body-mass index (BMI), mother's age, mother's education, father's education, number of living children in the family, and a large number of categorical variables, with each category coded as binary (zero or one):  child's gender (male or female), twin status (single or twin), the birth order (first, second, third, fourth, or fifth), the mother's employment status (employed or unemployed), mother's religion (Hindu, Muslim, Christian, Sikh, or other), mother's residence (urban or rural), family's wealth (poorest, poorer, middle, richer, richest), electricity (yes or no), radio (yes or no), television (yes or no), bicycle (yes or no), motorcycle (yes or no), and car (yes or no).

Although the number of covariates ($p=30$) is substantial, the sample size ($n=37,649$) is much larger than the number of covariates. Therefore, the dataset is very interesting from a methodological point of view, since it gives us an opportunity to compare various methods for performing inference to an ``ideal" benchmark of standard inference based on the standard quantile regression estimator without any model selection.  This was proven theoretically in  \cite{He:Shao} and in \cite{Belloni:Chern:Fern} under the $p \to \infty, p^3/n \to 0$ regime.  This is also the general option recommended by \cite{koenker:book} and \cite{LeebPotscher2005} in the fixed $p$ regime. Note that this ``ideal" option does not apply in practice when $p$ is relatively large; however it certainly applies in the present example.

We will compare the ``ideal" option with two procedures. First the standard post-selection inference method.  This method performs standard inference on the post-model selection estimator, ``assuming" that the model selection had worked perfectly.
Second the double selection estimator defined as in Algorithm \ref{Alg:2}. (The orthogonal score estimator performs similarly so it is omitted due to space constrains.) The proposed methods do not assume perfect selection, but rather build a protection against (moderate) model selection mistakes.

We now will compare our proposal to the ``ideal" benchmark and to the standard post-selection method.  We report the empirical results in Table \ref{Table:Empitical}. The first column reports results for the ideal option, reporting the estimates and standard errors enclosed in brackets.  The second column reports results for the standard post-selection method, specifically the point estimates resulting from  the post-penalized quantile regression, reporting the standard errors as if there had been no model selection.  The last column report the results for the double selection estimator (point estimate and standard error). Note that the Algorithm \ref{Alg:2} is  applied sequentially to each of the variables. Similarly, in order to provide estimates and confidence intervals for all variables using the naive approach, if the covariate was not selected by the $\ell_1$-penalized quantile regression, it was included in the post-model selection quantile regression for that variable.

\begin{center}{\small \linespread{0.5}  \begin{table}[h!]\hspace{-2cm} \scriptsize   \caption{Empirical Results}\begin{tabular}{l|c|c|c|l|c|c|c}
& quantile    &   Naive post                      &  Double  & & quantile    & Naive post                    &  Double  \\
Variable &  regression  & selection   &  Selection $\check\alpha_\tau$ & Variable &  regression  & selection   &  Selection $\check\alpha_\tau$ \\
   \hline
cage &   0.6456     &   0.6458    &   0.6449 & mreligionhindu &  -0.4351  &   -0.2423    &   -0.5680 \\
   & (0.0030) &  (0.0027)  &  (0.0032)&     & (0.2232) &  (0.1080)  & (0.1771)  \\
mbmi &   0.0603   &   0.0663    &     0.0582  & mreligionmuslim &  -0.3736  &   0.0294    &   -0.5119 \\
   & (0.0159) &   (0.0139)  & (0.0173)&     & (0.2417) &   (0.1438)  & (0.2176)\\
breastfeeding &   0.0691  &     0.0689    &    0.0700  &mreligionother &  -1.1448   &  -0.6977    &   -1.1539  \\
   & (0.0036) &  (0.0038)  &  (0.0044)&     & (0.3296) &   (0.3219)  & (0.3577)\\
mage &   0.0684  &    0.0454    &      0.0685  & mreligionsikh &  -0.5575    &   0.3692    &  -0.3408  \\
   & (0.0090) &   (0.0147)  & (0.0126)&  & (0.2969) &  (0.1897)  &  (0.3889)   \\
medu &   0.1590  &      0.1870    &     0.1566 & mresidencerural &   0.1545 &   0.1085    &   0.1678 \\
   & (0.0136) &  (0.0145)  & (0.0154)&     & (0.0994) &  (0.1363)  & (0.1311) \\
edupartner &   0.0175  &     0.0460    &    0.0348 &wealthpoorer &   0.2732  &   -0.1946    &    0.2648 \\
   & (0.0125) &  (0.0148)  & (0.0143)&     & (0.1761) &  (0.1231)  &  (0.1877)  \\
deadchildren &  -0.0680  &  -0.2121    &  -0.1546  & wealthmiddle &   0.8699  &    0.9197    &    0.9173  \\
   & (0.1124) &  (0.0978)  & (0.1121)&     & (0.1719) &   (0.2236)  & (0.2158) \\
csexfemale &  -1.4625  &    -1.5084    &   -1.5299  &wealthricher &   1.3254    &   0.5754    &  1.4040  \\
   & (0.0948) &  (0.0897)  & (0.1019)&     & (0.2244) &   (0.1408)  & (0.2505)\\
ctwintwin &  -1.7259  &  -1.8683    &   -1.9248  &wealthrichest &   2.0238  &    1.2967    &   2.1133  \\
   & (0.3741) &  (0.2295)  &  (0.7375)&     & (0.2596) &  (0.2263)  &  (0.3318) \\
cbirthorder2 &  -0.7256  &  -0.2230    &   -0.6818  & electricityyes &   0.3866  &    0.7555    &    0.4582 \\
   & (0.1073) &   (0.0983)  &  (0.1337)&     & (0.1581) & (0.1398)  & (0.1577) \\
cbirthorder3 &  -1.2367  &  -0.5751    &    -1.1326  & radioyes &  -0.0385  &    0.1363    &    0.0640  \\
   & (0.1315) &   (0.1423)  & (0.1719)&     & (0.1218) & (0.1214)  & (0.1207)\\
cbirthorder4 &  -1.7455    &  -0.7910    &   -1.5819  & televisionyes &  -0.1633    &  -0.0774    &    -0.0880  \\
   & (0.2244) &   (0.1938)  & (0.2193)&     & (0.1191) &  (0.1234)  &  (0.1386) \\
cbirthorder5 &  -2.4014     &  -1.1747    &   -2.3041  & refrigeratoryes &   0.1544     &   0.2451    &    0.2001  \\
   & (0.1639) & (0.1686)  & (0.2564)&     & (0.1774) &  (0.2081)  &  (0.1891) \\
munemployed &   0.0409  &   0.0077    &     0.0379  & bicycleyes &   0.1438   &   0.1314    &      0.1438  \\
   & (0.1025) &  (0.1077)  &  (0.1124)&     & (0.1048) &  (0.1016)  & (0.1121) \\
motorcycleyes &   0.6104  &    0.5883    &     0.5154  & caryes &   0.2741   &   0.5805    &     0.5470 \\
   & (0.1783) &  (0.1334)  & (0.1625)&    & (0.2058) &   (0.2378)  &  (0.2896) \\
  \end{tabular}\end{table}\label{Table:Empitical}}
\end{center}

What we see is very interesting. First of all, let us compare the ``ideal" option (column 1) and the naive post-selection (column 2).  The Lasso selection method removes 16 out of 30 variables, many of which are highly significant, as judged   by the ``ideal" option. (To judge significance we use normal approximations and critical value of 3, which allows us to maintain $5\%$ significance level after testing up to 50 hypotheses). In particular, we see that the following highly significant variables were dropped by Lasso:  mother's BMI, mother's age, twin status, birth orders one and two, and indicator of the other religion.  The standard post-model selection inference then makes the assumption that these are true zeros, which leads us to misleading conclusions about these effects.  The standard post-selection inference then proceeds to judge the significance of other variables, in some cases deviating sharply and significantly from the ``ideal" benchmark. For example, there is a sharp disagreement on magnitudes of the impact of the birth order variables and the wealth variables (for ``richer" and ``richest" categories). Overall, for the naive post-selection, 8 out of 30 coefficients were more than 3 standard errors away from the coefficients of the ``ideal" option.

We now proceed to comparing our proposed options to the ``ideal" option. We see approximate agreement  in terms of magnitude, signs of coefficients, and in standard errors. In few instances, for example, for the car ownership regressor, the disagreements in magnitude may appear large, but they become insignificant once we account for  the standard errors.

The main conclusion from our study is that the standard/naive post-selection inference can give misleading results, confirming our expectations and confirming predictions of \cite{LeebPotscher2005}. Moreover, the proposed inference procedure is able to deliver inference of high quality, which is very much in agreement with the ``ideal" benchmark.




\begin{center}
{\large\bf SUPPLEMENTARY MATERIAL}
\end{center}

\begin{description}

\item[Supplementary Material.] The supplemental appendix contains the proofs, additional discussions (variants, approximately sparse assumption) and technical results. (pdf)




\end{description}

%

 \bibliographystyle{plain}
\bibliography{mybibSparseQR}

\pagebreak

\setcounter{section}{0}
\setcounter{page}{1}
\begin{center}
{\sc \LARGE Supplementary Appendix for \\ ``Valid Post-Selection Inference in High-dimensional Approximately Sparse Quantile Regression Models"}\end{center}

\bigskip
\begin{quote}
The supplemental appendix contains the proofs of the main results, additional discussions and technical results. Section 1 collects the  notation. Section 2 has additional discussions on variants of the proposed methods, assumptions of approximately sparse functions, minimax efficiency, and the choices of bandwidth and penalty parameters and their implications for the growth of $s$, $p$ and $n$. Section 3 provides new results for $\ell_1$-penalized quantile regression with approximation errors, Lasso with estimated weights under (weaker) aggregated zero mean condition, and for the solution of the zero of the moment condition associated with the orthogonal score function. The proof of the main result of the main text is provided in Section 4. Section 5 collects auxiliary technical inequalities used in the proofs, Section 6 provides the proofs and technical lemmas for $\ell_1$-quantile regression. Section 7 provides proofs and technical lemmas for Lasso with estimated weights. Section 8 provides the proof for the orthogonal moment condition estimation problem. Finally Section 9 provided rates of convergence for the estimates of the conditional density function.
\end{quote}

\spacingset{1.45} 

\section*{Notation}   In what follows, we work with triangular array data $\{ \omega_{i,n} : i=1,\dots,n; n=1,2,3,\dots\}$ where for each $n$, $\{ \omega_{i,n} ; i=1,\dots,n \}$ is defined on
 the probability space $(\Omega, \mathcal{S}, \Pr_n)$. Each  $\omega_{i,n}= (y_{i,n}', z_{i,n}', d_{i,n}')'$
is a vector which are  i.n.i.d., that is, independent across $i$ but not necessarily identically distributed. Hence all parameters that characterize the distribution of  $\{\omega_{i,n} : i=1,\dots,n\}$ are
implicitly indexed by $\Pr_n$ and thus by $n$.  We omit this dependence from the notation for the sake of simplicity.  We use $\En$ to abbreviate the notation $n^{-1}\sum_{i=1}^n$; for example, $\En[f] := \En[f(\omega_{i})] := n^{-1} \sum_{i=1}^n f(\omega_{i})$. We also use the following notation:
$\barEp[f] := \Ep \left [ \En[f] \right] =  \Ep \left [ \En[f(\omega_{i})] \right ]= n^{-1} \sum_{i=1}^n \Ep[f(\omega_{i})]$.
The $\ell_2$-norm is denoted by
$\|\cdot\|$; the $\ell_0$-``norm'' $\|\cdot\|_0$ denotes the number of non-zero components of a vector; and the $\ell_{\infty}$-norm $\| \cdot \|_{\infty}$ denotes the maximal absolute value in the components of a vector.  
Given a vector $\delta \in \RR^p$, and a set of
indices $T \subset \{1,\ldots,p\}$, we denote by $\delta_T \in \RR^p$ the vector in which $\delta_{Tj} = \delta_j$ if $j\in T$, $\delta_{Tj}=0$ if $j \notin T$. We also denote by $\delta^{(k)}$ the vector with $k$ non-zero components corresponding to $k$ of the largest components of $\delta$ in absolute value. We use the notation $(a)_+ = \max\{a,0\}$, $a \vee b = \max\{ a, b\}$, and $a \wedge b = \min\{ a , b \}$. We also use the notation $a \lesssim b$ to denote $a \leqslant c b$ for some constant $c>0$ that does not depend on $n$; and $a\lesssim_P b$ to denote $a=O_P(b)$. For an event $E$, we say that $E$ wp $\to$ 1 when $E$ occurs with probability approaching one as $n$ grows.  Given a $p$-vector $b$, we denote
$\text{support}(b) = \{ j \in \{1,...,p\}: b_j \neq 0\} $. We also use $\rho_\tau(t) = t(\tau - 1\{t\leq 0\})$ and $\varphi_\tau(t_1,t_2) = (\tau - 1\{t_1 \leq t_2\} )$.

Define the minimal and maximal $m$-sparse eigenvalues of a symmetric positive semidefinite matrix $M$ as
\begin{equation}\label{Def:RSE1}
\semin{m}[M] : = \min_{1\leq \|\delta \|_{0} \leqslant m} \frac{  \delta 'M \delta  }{\|\delta\|^2} \ \ \mbox{and} \ \ \hfill
 \semax{m}[M] : = \max_{1\leq \|\delta \|_{0} \leqslant m } \frac{ \delta 'M \delta }{\|\delta\|^2}.
\end{equation}
For notational convenience we write $\tilde x_i = (d_i,x_i')'$, $\semin{m}:= \semin{m}[\En [\tilde x_i\tilde x_i']]$, and $\semax{m}:= \semax{m}[\En [\tilde x_i\tilde x_i']]$.

\section{Additional Discussions}

\subsection{Variants of the Proposed Algorithms}\label{Sec:Variants}

There are several different ways to implement the sequence of steps underlying the  two procedures outlined in Algorithms \ref{Alg:1} and \ref{Alg:2}. The estimation of the control function $g_\tau$ can be done through other regularization methods like $\ell_1$-penalized quantile regression instead of the post-$\ell_1$-penalized quantile regression. The estimation of the error term $v$ in Step 2 can be carried out with  Dantzig selector, square-root Lasso or the associated post-selection method could be used instead of Lasso or post-Lasso. Solving for the zero of the moment condition induced by the orthogonal score function can be substituted by a one-step  correction from the $\ell_1$-penalized quantile regression estimator $\hat\alpha_\tau$, namely, $\check \alpha_\tau = \hat \alpha_\tau + (\En[\hat v_i^2])^{-1} \En[(\tau - 1\{ y_i \leq \hat\alpha_\tau d_i+x_i'\hat\beta_\tau\})\hat v_i]$.

Other variants can be constructing alternative orthogonal score functions. This can be achieved by changing the weights in the equation (\ref{Eq:indirect}) to alternative weights, say $\tilde f_i$, that lead to different errors terms $\tilde v$ that satisfies $\Ep[ \tilde f x \tilde v ] = 0$. Then the orthogonal score function is constructed as $\psi_i(\alpha) = (\tau - 1\{y_i\leq d_i\alpha + g_\tau(z_i)\}) \tilde v_i (\tilde f_i/f_i)$. It turns out that the choice $\tilde f_i = f_i$ minimizes the asymptotic variance of the estimator of $\alpha_\tau$ based upon the empirical analog of (\ref{Eq:Id}),  among all the score functions satisfying (\ref{Eq:OrthoI}).
An example is to set $\tilde f_i = 1$ which would lead to $\tilde v_i=d_i-\Ep[d_i\mid z_i]$. Although such choice leads to a less efficient estimator, the estimation of $\Ep[d_i\mid z_i]$ and $f_i$ can be carried out separably which can lead to weaker regularity conditions.

\subsection{Handling Approximately Sparse Functions}\label{DiscussionNearOrtho}

As discussed in Remark \ref{RemarkExtraMoment}, in order to handle approximately sparse models to represent $g_\tau$ in (\ref{ApproxSparse}) an approximate orthogonality condition is assumed, namely
\begin{equation}
\label{ExtraMomentDisc}
\barEp[ f_i v_i r_{\gtau i} ] = o(n^{-1/2}).
\end{equation}
In the literature such a condition has been (implicitly) used before. For example, (\ref{ExtraMomentDisc}) holds if the function $g_\tau$ is an  exactly sparse linear combination of the covariates so that all the approximation errors are exactly zero, namely, $r_{\gtau i} = 0, i=1,\ldots,n$. An alternative assumption in the literature that implies (\ref{ExtraMomentDisc}) is to have $\Ep[f_id_i\mid z_i]=f_i\{x_i'\theta_{\tau}+r_{\mtau i}\}$, where $\theta_{\tau}$ is sparse and $r_{\mtau i}$ is suitably small, which implies orthogonality to all functions of $z_i$ since we have $\Ep[f_iv_i \mid z_i ] = 0$.

The high-dimensional setting makes the condition (\ref{ExtraMomentDisc}) less restrictive as $p$ grows. Our discussion is based on the assumption that the function $g_\tau$ belongs to a well behaved class of functions. For example, when $g_\tau$ belongs to a Sobolev space $\mathcal{S}(\alpha,L)$ for some $\alpha \geq 1$ and $L>0$ with respect to the basis $\{x_j=P_j(z), j\geq 1\}$. As in \cite{TsybakovBook}, a Sobolev space of functions consists of  functions $g(z)=\sum_{j=1}^\infty\theta_jP_j(z)$ whose Fourier coefficients $\theta$
 satisfy $$\theta \in \Theta(\alpha, L) = \left\{ \theta \in \ell^2(\mathbb{N}): \begin{array}{l}  \sum_{j=1}^\infty|\theta_j| < \infty, \ \ \sum_{j=1}^\infty j^{2\alpha}
\theta_j^2 \leq L^2 \end{array}\right\}.$$
More generally, we can consider functions in a $p$-Rearranged Sobolev space $\mathcal{RS}(\alpha,p,L)$ which allow permutations in the first $p$ components as in \cite{BCW2014pivotal}. Formally, the class of functions $g(z)=\sum_{j=1}^\infty\theta_jP_j(z) $ such that  $$\theta \in \Theta^{R}(\alpha,p,L) = \left\{ \theta \in \ell^2(\mathbb{N}) : \sum_{j=1}^\infty|\theta_j| < \infty, \  \begin{array}{l}\exists \ \mbox{permutation} \ \Upsilon \ \mbox{of} \ \{1,\ldots,p\}: \\ \sum_{j=1}^p j^{2\alpha}\theta_{\Upsilon(j)}^2 + \sum_{j=p+1}^\infty j^{2\alpha}\theta_{j}^2 \leq L^2\end{array}\right\}.$$
It follows that $\mathcal{S}(\alpha,L) \subset \mathcal{RS}(\alpha,p,L)$ and $p$-Rearranged Sobolev space reduces substantially the dependence on the ordering of the basis.

Under mild conditions, it was shown in \cite{BCW2014pivotal} that for functions in $\mathcal{RS}(\alpha,p,L)$ the rate-optimal choice for the size of the  support of the oracle model obeys $s\lesssim n^{1/[2\alpha+1]}$. It follows that
$$\begin{array}{l} \barEp[ r_{\gtau}^2]^{1/2} = \barEp[ \{ \sum_{j> s} \theta_{(j)}P_{(j)}(z_i)\}^2]^{1/2}  \lesssim n^{-\alpha/\{1+2\alpha\}}.\end{array}$$
However, this bound cannot guarantee converge to zero faster than $\sqrt{n}$-rate to potentially imply (\ref{ExtraMomentDisc}). Fortunately, to establish relation (\ref{ExtraMomentDisc}) one can exploit orthogonality with respect all $p$ components of $x_i$. Indeed we have
$$ \begin{array}{rl}
|\barEp[ f_i v_i r_{\gtau i} ]| & = |\barEp[ f_i v_i \{ \sum_{j=s+1}^p \theta_jP_j(z_i) + \sum_{j\geq p+1} \theta_jP_j(z_i)]| \\
& =|  \sum_{j\geq p+1} \barEp[ f_i v_i \theta_jP_j(z_i)]| \leq \sum_{j\geq p+1} |\theta_j| \{\barEp[ f_i^2 v_i^2]\Ep[ P_j^2(z_i)] \}^{1/2}\\
& \leq \{\barEp[ f_i^2 v_i^2]\max_{j\geq p+1}\Ep[ P_j^2(z_i)] \}^{1/2} ( \sum_{j\geq p+1} |\theta_j|^2j^{2\alpha})^{1/2} ( \sum_{j\geq p+1} j^{-2\alpha})^{1/2}\\
& =O( p^{-\alpha+1/2}). \end{array}$$
Therefore, condition (\ref{ExtraMomentDisc}) holds if $n=o(p^{2\alpha-1})$, in particular, for any $\alpha\geq 1$, $n = o(p)$ suffices.

\subsection{Minimax Efficiency}

In this section we make some connections to the (local) minimax efficiency analysis from the semiparametric efficiency analysis.   In this section  for the sake of exposition we assume that $(y_i,x_i,d_i)_{i=1}^n$ are i.i.d., sparse models, $r_{\mtau i}=r_{\gtau i} = 0$, $i=1,\ldots,n$, and the median case ($\tau = .5$). \cite{Lee2003} derives an efficient score function for the partially linear median regression model:
$$
S_i= 2\varphi_\tau(y_i,d_i\alpha_\tau+x_i'\beta_\tau) f_i [d_i -  m_\tau^*(z)],
$$
where  $m_\tau^*(x_i)$ is given by
$$
m^*_\tau(x_i) =  \frac{\Ep[f_i^2 d_i|x_i]}{\Ep[f^2_i |x_i]}.
$$
Using the assumption $m^*_\tau(x_i)=x_i'\theta^*_\tau$ , where $\|\theta^*_\tau\|_0 \leq s \ll n$ is sparse,  we have that
$$
S_i= 2\varphi_\tau(y_i, d_i\alpha_\tau+x_i'\beta_\tau) v_i^*,
$$
where $v_i^* = f_id_i - f_im^*_\tau(x_i)$ would correspond to $v_i$ in (\ref{Eq:indirect}).  It follows
that the estimator based on  $v^*_i$ is actually efficient in the minimax sense (see Theorem
18.4 in \cite{kosorok:book}), and inference about $\alpha_\tau$
based on this estimator provides best minimax power against local alternatives (see Theorem  18.12 in \cite{kosorok:book}).

The claim above is formal as long as,  given a law $Q_n$, the least favorable submodels
are permitted as deviations that lie within the overall model.  Specifically, given a law $Q_n$, we shall need to allow for a certain neighborhood $\mathcal{Q}_n^{\delta}$ of $Q_n$ such that
$Q_n \in \mathcal{Q}_n^{\delta} \subset \mathcal{Q}_n$, where the overall model $\mathcal{Q}_n$
is defined similarly as before, except now permitting heteroscedasticity (or we can keep homoscedasticity
$f_i = f_{\epsilon}$ to maintain formality).    To allow for this we consider
a collection of models indexed by a parameter $t  = (t_1, t_2)$:
\begin{eqnarray}
y_i & = & d_i (\alpha_\tau+ t_1) + x_i'(\beta_\tau + t_2 \theta^*_\tau) + \epsilon_i,    \  \  \  \|t\| \leq \delta, \\
f_i d_i & =&   f_i x_i'\theta_\tau^* + v_i^*,  \ \ \Ep [f_i v_i^*|x_i] =0,
\end{eqnarray}
where $\|\beta_\tau\|_0 \vee \| \theta^*_\tau\|_0 \leq s/2$ and conditions as in Section \ref{Sec:Model} hold.  The case with $t=0$ generates the model $Q_n$; by varying $t$ within $\delta$-ball, we generate models $ \mathcal{Q}_n^{\delta}$, containing the least favorable deviations. By \cite{Lee2003}, the efficient score for the model given above is $S_i$, so we cannot have a better regular estimator than the estimator whose influence function is $J^{-1}S_i$, where $J = \Ep[S_i^2]$.  Since our model $\mathcal{Q}_n$ contains $ \mathcal{Q}_n^{\delta}$, all the formal conclusions about (local minimax) optimality of our estimators hold from theorems cited above (using subsequence arguments to handle models changing with $n$).   Our estimators are regular,
since under $\mathcal{Q}_n^t$ with $t = ( O(1/\sqrt{n}), o(1))$, their first order asymptotics do not change, as a consequence of Theorems in Section \ref{Sec:Model}.  (Though our theorems actually prove more than this.)

\subsection{Choice of Bandwidth $h$ and Penalty Level $\lambda$}\label{Sec:Bandwidth}

The proof of Theorem \ref{theorem:inferenceAlg1prime} provides a detailed analysis for generic choice of bandwidth $h$ and the penalty level $\lambda$ in Step 2 under Condition D. Here we discuss two particular choices for $\lambda$, for $\gamma = 0.05/n$
\begin{center}
\begin{tabular}{ll}
(i)   $\lambda = h^{-1}\sqrt{n}\Phi^{-1}(1-\gamma/2p)$  \ \ \mbox{and} \ \ (ii)  $\lambda = 1.1\sqrt{n}2\Phi^{-1}(1-\gamma/2p)$. \\
\end{tabular}
\end{center}
 The choice (i) for $\lambda$ leads to a sparser estimators by adjusting to the slower rate of convergence of $\hat f_i$, see (\ref{Ratef}). The choice (ii) for $\lambda$ corresponds to the (standard) choice of penalty level in the literature for Lasso. Indeed, we have the following sparsity guarantees for the $\tilde \theta_{\tau}$ under each choice
\begin{center}
\begin{tabular}{ll}
(i)   $\widetilde s_{\theta_\tau} \lesssim s + nh^{2\bar k+2}/\log(pn)$ \ \  \mbox{and} \ \ (ii)  $\widetilde s_{\theta_\tau} \lesssim  h^{-2}s + nh^{2\bar k}/\log(pn)$. \\
\end{tabular}
\end{center}

In addition to the requirements in Condition M, $(K_q^2s^2+s^3)\log^3(pn)\leq \delta_n n$ and $K_q^4s\log(pn)\log^3n\leq \delta_n n$, which are independent of $\lambda$ and $h$, we have that Condition D simplifies to
\begin{center}
\begin{tabular}{ll}
(i) &  $ h^{2\bar k}s\log (pn)\leq \delta_n,  \ \ \   h^{2{\bar k}+1}\sqrt{n}\leq \delta_n, \ \ \  h^{-2}K_q^4s\log (p n) \leq \delta_nn,$ \\
    & $ h^{-2}s^2\log^2 (p n) \leq \delta_nn, \ \ h^{2\bar k+2}K_q^2\log(pn)\log^3 n \leq \delta_n $ \\
(ii)  &  $ h^{2\bar k-2}s\log (pn)\leq \delta_n,  \ \ \   h^{2{\bar k}}\sqrt{n}\leq \delta_n, \ \ \  (h^{-2}\log(pn)\log^3 n+K_q^2)K_q^2s\log (p n) \leq \delta_nn, $ \\
    & $ \{h^{-2}+\log(pn)\}h^{-2}s^2\log (p n) \leq \delta_nn, \ \ h^{2\bar k}K_q^2\log(pn)\log^3 n \leq \delta_n  $ \\
\end{tabular}
\end{center}
For example, using the choice of $\hat f_i$ as in (\ref{Eq:hatfsecond}) so that $\bar k = 4$, we have that the following choice growth conditions suffice for the conditions above:
\begin{center}
\begin{tabular}{ll}
(i) &  $K_q^3 s^3\log^3 (pn) \leq \delta_n n$, $K_q^3 \leq n^{1/3}$ and $h=n^{-1/6}$ \\
(ii) &  $(s + K_x^3 )s^3\log^3 (pn)  \leq \delta_n n$, $K_q^3 \leq n^{1/3}$, and  $h = n^{-1/8}$\\
\end{tabular}
\end{center}

\section{Analysis of the Estimators}\label{Sec:AnalysisHL}

This section contains the main tools used in establishing the main inferential results. The high-level conditions here are intended to be applicable in a variety of settings and they are implied by the regularities conditions provided in the previous sections. The results provided here are of independent interest (e.g. properties of Lasso under estimated weights). We establish the inferential results (\ref{Eq:InferentialAlpha}) and (\ref{Eq:InferenceLn}) in Section  \ref{Sec:EstIVQR} under high level conditions. To verify these high-level conditions we need rates of convergence for the estimated residuals $\hat v$ and the estimated confounding function $\hat g_\tau (z) = x'\hat \beta_\tau$ which are established in sections \ref{Sec:EstLasso} and \ref{Sec:Step1} respectively. The main design condition relies on the restricted eigenvalue proposed in \cite{BickelRitovTsybakov2009}, namely for $\tilde x_i = [ d_i, x_i' ]'$
\begin{equation}\label{Def:RestrEig} \kappa_{\cc} = \inf_{\|\delta_{T^c}\|_1\leq \cc \|\delta_T\|_1} \| \tilde x_i'\delta\|_{2,n}/\|\delta_T\|\end{equation}
where $\cc = (c+1)/(c-1)$ for the slack constant $c>1$, see \cite{BickelRitovTsybakov2009}. When $\cc$ is bounded, it is well known that $\kappa_\cc$ is bounded away from zero provided sparse eigenvalues of order larger than $s$ are well behaved, see \cite{BickelRitovTsybakov2009}.

\subsection{$\ell_1$-Penalized Quantile Regression}\label{Sec:Step1}

In this section for a quantile index $u\in (0,1)$, we consider the equation
\begin{equation}\label{Def:ModelQuantile} \tilde y_i =\tilde x_i' \eta_u + r_{ui} +\epsilon_i, \ \ \mbox{$u$-quantile of} \ \ (\epsilon_i \mid \tilde x_i, r_{ui}) = 0\end{equation}
where  we observe $\{(\tilde y_i,\tilde x_i):i=1,\ldots,n\}$,  which are independent across $i$. To estimate $\eta_u$ we consider the $\ell_1$-penalized $u$-quantile regression estimate
$$ \hat\eta_u \in \arg\min_{\eta} \En[ \rho_u(\tilde y_i - \tilde x_i'\eta)] + \frac{\lambda_u}{n}\|\eta\|_1. $$
and the associated post-model selection estimates. That is, given an estimator $\bar\eta_{uj}$
 \begin{equation}\label{def:l1qr}\widetilde\eta_u \in \arg\min_{\eta} \ \left\{ \ \En[ \rho_u(\tilde y_i  - \tilde x_i'\eta)] \ :\ \eta_{j} = 0 \ \ \mbox{if} \ \ \bar\eta_{uj} = 0\right\}.\end{equation}
We will be typically concerned with $\hat\eta_u$, thresholded versions of the $\ell_1$-penalized quantile regression, defined as $\hat\eta_{uj}^\mu = \hat\eta_{uj}1\{ |\hat\eta_{uj}|\geq \mu/\En[\tilde x_{ij}^2]^{1/2}\}$.

As established in \cite{BC-SparseQR} for sparse models and in \cite{kato} for approximately sparse models, under the event that
\begin{equation}\label{Eq:PenaltyL1QR} \frac{\lambda_u}{n} \geq c \| \En[ (u - 1\{ \tilde y_i \leq \tilde x_i'\eta_u+r_{ui}\})\tilde x_i]\|_\infty\end{equation}
the estimator above achieves good theoretical guarantees under mild design conditions. Although $\eta_u$ is unknown, we can set $\lambda_u$ so that the event in (\ref{Eq:PenaltyL1QR}) holds with high probability. In particular, the pivotal rule proposed in \cite{BC-SparseQR} and generalized in \cite{kato} proposes to set $\lambda_u :=  c n\Lambda_u(1-\gamma\mid \tilde x)$ for $c> 1$ where
\begin{equation}\label{Eq:PenaltyL1QRPivotal} \Lambda_u(1-\gamma\mid \tilde x, r_{u})=(1-\gamma)-\mbox{quantile of} \ \| \En[ (u - 1\{ U_i \leq u\})\tilde x_i]\|_\infty \end{equation}
where $U_i\sim U(0,1)$ are independent random variables conditional on $\tilde x_i$, $i=1,\ldots,n$. This quantity can be easily approximated via simulations. Below we summarize the high level conditions we require.

%

{\bf Condition PQR.} Let $T_u = \supp(\eta_u)$ and normalize $\En[\tilde x_{ij}^2]=1$, $j=1,\ldots,p$. Assume that for some $s\geq 1$, $\|\eta_u\|_0\leq s$, $\|r_{ui}\|_{2,n}\leq C\sqrt{s\log(p)/n}$. Further,
the conditional distribution function of $\epsilon_i$ is absolutely continuous with continuously differentiable density $f_\epsilon(\cdot\mid \tilde x_i, r_{ui})$ such that $0<\underline{f} \leq f_i \leq \sup_t f_{\epsilon_i\mid \tilde x_i, r_{ui}}(t \mid \tilde x_i, r_{ui}) \leq \bar f$, $\sup_{t} |f_{\epsilon_i\mid \tilde x_i, r_{ui}}'( t \mid \tilde x_i, r_{ui})|<\bar f'$ for fixed constants $\underline{f}$, $\bar f$ and $\bar f'$.


Condition PQR is implied by Condition AS. The conditions on the approximation error and near orthogonality conditions follows from choosing a model $\eta_u$ that optimally balance the bias/variance trade-off. The assumption on the conditional density is standard in the quantile regression literature even with fixed $p$ case developed in \cite{koenker:book} or the case of $p$ increasing slower than $n$ studied in \cite{Belloni:Chern:Fern}.


Next we present bounds on the prediction norm of the $\ell_1$-penalized quantile regression estimator.

\begin{lemma}[Estimation Error of $\ell_1$-Penalized Quantile Regression]\label{Theorem:L1QRnp}
Under Condition PQR, setting $\lambda_u \geq cn\Lambda_u(1-\gamma\mid \tilde x)$, we have with probability $1-4\gamma$ for $n$ large enough
$$\|\tilde x_i'(\hat\eta_u - \eta_u)\|_{2,n} \lesssim N:=\frac{\lambda_u\sqrt{s}}{n\kappa_{2\cc}}+ \frac{1}{\kappa_{2\cc}}\sqrt{\frac{s\log(p/\gamma)}{n}}$$
and $\hat\eta_u - \eta_u \in A_u:=\Delta_{2\cc}\cup \{ v  : \|\tilde x_i'v\|_{2,n}=N, \|v\|_1 \leq 8C\cc s\log(p/\gamma)/\lambda_u\}$, provided that
$$ \sup_{\bar\delta\in A_u} \frac{\En[|r_{ui}||\tilde x_i'\bar\delta|^2]}{\En[|\tilde x_i'\bar \delta|^2]} +N\sup_{\bar\delta\in A_u} \frac{\En[|\tilde x_i'\bar\delta|^3]}{\En[|\tilde x_i'\bar \delta|^2]^{3/2}} \to 0.$$
\end{lemma}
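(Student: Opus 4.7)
The plan is to combine the classical $\ell_1$-penalized quantile-regression argument of Belloni--Chernozhukov with the approximate-sparsity bookkeeping of Kato, using the two sup-conditions on $A_u$ in the statement to absorb the approximation error $r_{ui}$ and the cubic remainder in Knight's expansion. Set $\delta = \hat\eta_u - \eta_u$, $T = \supp(\eta_u)$ with $|T|\leq s$, $e_i = \tilde y_i - \tilde x_i'\eta_u = \epsilon_i + r_{ui}$, and $W_i := 1\{\epsilon_i \leq 0\}$. The first step is a score-domination event: conditionally on $\tilde x$, the $W_i$ are independent Bernoulli$(u)$, so the choice $\lambda_u \geq cn\Lambda_u(1-\gamma\mid\tilde x)$ together with the definition of $\Lambda_u$ gives, with probability $\geq 1-\gamma$,
$$\|S\|_\infty := \|\En[(u-W_i)\tilde x_i]\|_\infty \leq \lambda_u/(cn).$$
Knight's identity $\rho_u(e-t)-\rho_u(e) = -t(u-1\{e\leq 0\}) + \int_0^t(1\{e\leq s\}-1\{e\leq 0\})ds$ together with optimality of $\hat\eta_u$ yields the basic inequality
$$M(\delta) \leq \En[\tilde x_i'\delta\,(u - 1\{e_i \leq 0\})] + (\lambda_u/n)(\|\delta_T\|_1 - \|\delta_{T^c}\|_1), \qquad M(\delta) \geq 0,$$
where $M(\delta) := \En\bigl[\int_0^{\tilde x_i'\delta}(1\{e_i\leq s\}-1\{e_i\leq 0\})ds\bigr]$.

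Writing $u - 1\{e_i \leq 0\} = (u-W_i) + (W_i - 1\{\epsilon_i \leq -r_{ui}\})$ splits the right-hand side into a pivotal score (bounded by $\|\delta\|_1\lambda_u/(cn)$ via the previous step) and an approximation-error piece. For the latter, a Bernstein/Talagrand argument handles it: conditionally on $\tilde x$ one has $\Ep[|W_i - 1\{\epsilon_i \leq -r_{ui}\}|\mid \tilde x_i] \leq \bar f|r_{ui}|$ and $\|r_u\|_{2,n} \leq C\sqrt{s/n}$, so on the cone of interest this term is controlled by $\|\delta\|_1\lambda_u/(cn)$ up to a further remainder of order $\|\tilde x_i'\delta\|_{2,n}\sqrt{s\log(p/\gamma)/n}$ with probability $\geq 1-\gamma$. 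Since $M(\delta)\geq 0$, these bounds force the restricted-cone condition $\|\delta_{T^c}\|_1 \leq 2\cc\|\delta_T\|_1$, i.e.\ $\delta \in \Delta_{2\cc} \subset A_u$, for a suitable slack $\cc=(c+1)/(c-1)$.

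Next I would minorize $M(\delta)$. Its conditional expectation, using $F_{e_i}(s)-F_{e_i}(0)=\int_0^s f_{\epsilon_i}(r-r_{ui})dr$ and the density bounds in Condition PQR, satisfies
$$\bar M(\delta) \geq \tfrac{\underline f}{2}\|\tilde x_i'\delta\|_{2,n}^2 - \tfrac{\bar f'}{2}\En[|r_{ui}|(\tilde x_i'\delta)^2] - \tfrac{\bar f'}{6}\En[|\tilde x_i'\delta|^3],$$
and the two hypotheses on $A_u$ in the lemma guarantee that, once $\|\tilde x_i'\delta\|_{2,n}=N$ is enforced, the last two terms are $o(\|\tilde x_i'\delta\|_{2,n}^2)$. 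The deviation $|M(\delta)-\bar M(\delta)|$ is controlled via symmetrization/contraction and a Talagrand-type bound for empirical processes indexed by $\delta \in A_u$, producing $\lesssim \|\tilde x_i'\delta\|_{2,n}\sqrt{s\log(p/\gamma)/n}$ on a further event of probability $\geq 1-\gamma$. To keep $\delta$ inside the localized set $A_u$ I would invoke the standard convexity trick: if $\|\tilde x_i'\delta\|_{2,n}>N$, replace $\delta$ by $\bar\delta = (N/\|\tilde x_i'\delta\|_{2,n})\delta$; convexity of the penalized objective propagates the basic inequality to $\bar\delta$, which by construction lies in $A_u$ (in the second component of the union, via the $\ell_1$-bound that comes out of the cone analysis when $\lambda_u/n$ multiplies $\|\bar\delta\|_1$).

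Combining the four good events (probability $\geq 1-4\gamma$) yields
$$(\underline f/4)\|\tilde x_i'\bar\delta\|_{2,n}^2 \leq C(\lambda_u/n)\|\bar\delta_T\|_1 + C\sqrt{s\log(p/\gamma)/n}\;\|\tilde x_i'\bar\delta\|_{2,n},$$
and the restricted-eigenvalue inequality $\|\bar\delta_T\|_1 \leq \sqrt s\,\|\bar\delta_T\| \leq \sqrt s\,\|\tilde x_i'\bar\delta\|_{2,n}/\kappa_{2\cc}$ delivers $\|\tilde x_i'\bar\delta\|_{2,n} \lesssim N$, contradicting $\|\tilde x_i'\bar\delta\|_{2,n}=N$ unless the original $\delta$ already satisfied the announced rate. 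The main obstacle is precisely the simultaneous handling of the $r_{ui}$-driven bias and the cubic remainder on the localized set $A_u$: a naive propagation of $r_{ui}$ through Knight's expansion loses a factor of $\sqrt s$, and the two sup-conditions on $A_u$ in the statement are the ``black boxes'' engineered to hide this loss inside the $o(\|\tilde x_i'\delta\|_{2,n}^2)$ remainder so that the clean rate $N$ survives.
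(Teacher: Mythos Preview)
Your overall strategy matches the paper's --- Knight's identity, score domination, convex localization to a shell $\|\tilde x_i'\tilde\delta\|_{2,n}=N$, quadratic minoration on $A_u$, and an empirical-process bound --- but the step placing the localized error in $A_u$ has a real gap.

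You assert that $M(\delta)\geq 0$ together with the score bound forces $\delta\in\Delta_{2\cc}$ directly. It does not. After splitting the score as $(u-W_i)+(W_i-1\{\epsilon_i\leq -r_{ui}\})$, the second piece contributes a term whose mean is bounded by $\bar f\|r_{ui}\|_{2,n}\|\tilde x_i'\delta\|_{2,n}$; this scales with the \emph{prediction norm} of $\delta$, not with $\|\delta\|_1$, so it cannot be folded into the $(\lambda_u/n)\|\delta\|_1$ part of the cone argument. Your phrase ``on the cone of interest this term is controlled by $\|\delta\|_1\lambda_u/(cn)$'' is circular --- you are invoking the cone to derive it. What the basic inequality actually delivers is only the \emph{relaxed} cone
\[
\|\tilde\delta_{T^c}\|_1\ \leq\ \cc\|\tilde\delta_T\|_1 + (\text{additive term}),
\]
and one must then split into two cases: either $\tilde\delta\in\Delta_{2\cc}$, or the additive term dominates and $\|\tilde\delta\|_1$ is small, landing $\tilde\delta$ in the \emph{second} component of $A_u$. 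This is precisely why $A_u$ is defined as a union, and your write-up conflates the two branches (first claiming $\delta\in\Delta_{2\cc}$, then later gesturing at the second component during localization without a coherent case analysis).

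The paper sidesteps the difficulty by centering the linearization at the \emph{true} conditional quantile $\tilde x_i'\eta_u+r_{ui}$ rather than at $\tilde x_i'\eta_u$. The score $(u-1\{\tilde y_i\leq\tilde x_i'\eta_u+r_{ui}\})$ is then exactly pivotal, and all approximation error is isolated in the $\delta$-free scalar
\[
\hat R(\eta_u)=\En[\rho_u(\tilde y_i-\tilde x_i'\eta_u)]-\En[\rho_u(\tilde y_i-\tilde x_i'\eta_u-r_{ui})]+\En[(u-1\{\tilde y_i\leq \tilde x_i'\eta_u+r_{ui}\})r_{ui}],
\]
which is bounded by $\bar R_\gamma\lesssim s\log(p/\gamma)/n$ with probability $\geq 1-\gamma$ via a separate symmetrization lemma. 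The relaxed cone becomes $\|\tilde\delta_{T^c}\|_1\leq \cc\|\tilde\delta_T\|_1+O(n\bar R_\gamma/\lambda_u)$, from which the two-case analysis places $\tilde\delta\in A_u$ without circularity. A related minor point: your final combination display only invokes $\kappa_{2\cc}$, but when $\tilde\delta$ falls in the small-$\ell_1$ component of $A_u$ the empirical-process bound must come from the $\ell_1$-bounded maximal inequality rather than the cone-based one; the paper applies both versions to cover the two cases.
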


Lemma \ref{Theorem:L1QRnp} establishes the rate of convergence in the prediction norm for the $\ell_1$-penalized quantile regression estimator. Exact constants are derived in the proof. The extra growth condition required for identification is mild. For instance we typically have $\lambda_u \sim \sqrt{n\log (np)}$ and for many designs of interest we have $$\inf_{\delta\in\Delta_\cc} \|\tilde x_i'\delta\|_{2,n}^{3}/\En[|\tilde x_i'\delta|^3]$$  bounded away from zero (see \cite{BC-SparseQR}). For more general designs we have {\small $$\inf_{\delta\in A_u} \frac{\|\tilde x_i'\delta\|_{2,n}^{3}}{\En[|\tilde x_i'\delta|^3]}\geq \inf_{\delta\in A_u} \frac{\|\tilde x_i'\delta\|_{2,n}}{ \|\delta\|_1\max_{i\leq n}\|\tilde x_i\|_\infty} \geq \frac{1}{\max_{i\leq n}\|\tilde x_i\|_\infty} \left( \frac{\kappa_{2\cc}}{\sqrt{s}(1+\cc)} \wedge \frac{\lambda_u N}{8C\cc s\log(p/\gamma) }\right) .$$}

\begin{lemma}[Estimation Error of Post-$\ell_1$-Penalized Quantile Regression]\label{Thm:MainTwoStepGeneric}  Assume Condition PQR holds, and that the Post-$\ell_1$-penalized quantile regression is based on an arbitrary vector $\hat\eta_u$. Let $\bar r_u \geq \|r_{ui}\|_{2,n}$, $\widehat s_u \geq |  \supp(\hat\eta_u)|$ and $\hat Q \geq  \En[\rho_u(\tilde y_i - \tilde x_i'\hat\eta_u)]-\En[\rho_u(\tilde y_i-\tilde x_i'\eta_u))]$ hold with probability $1-\gamma$. Then we have for $n$ large enough, with probability $1-\gamma-\varepsilon-o(1)$
$$\|\tilde x_i'(\widetilde \eta_u - \eta_u)\|_{2,n} \lesssim \widetilde N:= \sqrt{ \frac{ (\hat s_u + s)
\log(p/\varepsilon)}{n\semin{\hat s_u+s}}} + \bar f \bar r_u + \hat Q^{1/2}$$
provided that
$$ \sup_{\|\bar\delta\|_0\leq \hat s_u+s} \frac{\En[|r_{ui}||\tilde x_i'\bar\delta|^2]}{\En[|\tilde x_i'\bar \delta|^2]} +\widetilde N\sup_{\|\bar\delta\|_0\leq \hat s_u+s} \frac{\En[|\tilde x_i'\bar\delta|^3]}{\En[|\tilde x_i'\bar \delta|^2]^{3/2}} \to 0.$$
%
\end{lemma}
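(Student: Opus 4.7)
The plan is to combine the suboptimality gap $\hat Q$ of $\hat\eta_u$ with an identification-type quadratic lower bound on the population quantile loss and a uniform empirical-process bound over sparse deviations, then solve the resulting quadratic inequality in the prediction norm. Set $\delta := \widetilde\eta_u - \eta_u$. Since $\widetilde\eta_u$ minimizes $\En[\rho_u(\tilde y_i - \tilde x_i'\eta)]$ over vectors with $\supp(\eta)\subseteq \supp(\hat\eta_u)$, feasibility of $\hat\eta_u$ combined with the hypothesis on $\hat Q$ yields, with probability $1-\gamma$,
\begin{equation}
\En[\rho_u(\tilde y_i - \tilde x_i'\widetilde\eta_u) - \rho_u(\tilde y_i - \tilde x_i'\eta_u)] \leq \hat Q,
\end{equation}
and by construction $\delta$ is supported on $T := \supp(\widetilde\eta_u)\cup \supp(\eta_u)$ with $|T|\leq \hat s_u + s$.

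Next, I would apply Knight's identity to $\rho_u(\tilde y_i - \tilde x_i'(\eta_u+\delta)) - \rho_u(\tilde y_i - \tilde x_i'\eta_u)$ and take expectations. A second-order Taylor expansion of $F_{\epsilon_i \mid \tilde x_i}$ around $0$, using $f_i\geq \underline f$, $f_i\leq \bar f$, and $|f'_{\epsilon_i}|\leq \bar f'$, yields the identification-type lower bound
\begin{equation}
\barEp[\rho_u(\tilde y_i - \tilde x_i'\widetilde\eta_u) - \rho_u(\tilde y_i - \tilde x_i'\eta_u)] \geq \tfrac{\underline f}{2}\|\tilde x_i'\delta\|_{2,n}^2 - \bar f\bar r_u\|\tilde x_i'\delta\|_{2,n} - \tfrac{\bar f'}{2}\En[|r_{ui}|(\tilde x_i'\delta)^2] - \tfrac{\bar f'}{6}\En[|\tilde x_i'\delta|^3].
\end{equation}
The $\bar f\bar r_u$ piece arises from the linear-in-$\delta$ correction $|u - F_{\epsilon_i}(-r_{ui}\mid\tilde x_i)|\leq \bar f|r_{ui}|$ together with Cauchy--Schwarz. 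The extra growth condition stated in the lemma is engineered exactly to force the two remainder terms above to be $o(\|\tilde x_i'\delta\|_{2,n}^2)$, leaving an effective quadratic curvature of order $\underline f\|\tilde x_i'\delta\|_{2,n}^2/4$.

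The third step is a uniform empirical-process bound on $(\En-\barEp)[\rho_u(\tilde y_i - \tilde x_i'\widetilde\eta_u) - \rho_u(\tilde y_i - \tilde x_i'\eta_u)]$. Since $\rho_u$ is $1$-Lipschitz, symmetrization followed by contraction reduces this to a Rademacher process that is linear in $\tilde x_i'\delta$. Taking a union bound over the $\binom{p+1}{\hat s_u+s}$ possible supports and invoking a sub-Gaussian maximal inequality gives, with probability at least $1-\epsilon$ and uniformly over $\delta$ with $\|\delta\|_0\leq \hat s_u+s$,
\begin{equation}
\Big|(\En-\barEp)[\rho_u(\tilde y_i - \tilde x_i'\widetilde\eta_u) - \rho_u(\tilde y_i - \tilde x_i'\eta_u)]\Big| \lesssim \|\delta\|\sqrt{\frac{(\hat s_u+s)\log(p/\epsilon)}{n}} \leq \|\tilde x_i'\delta\|_{2,n}\sqrt{\frac{(\hat s_u+s)\log(p/\epsilon)}{n\,\semin{\hat s_u+s}}},
\end{equation}
where the last step uses $\|\delta\|\leq \|\tilde x_i'\delta\|_{2,n}/\sqrt{\semin{\hat s_u+s}}$, valid by $(\hat s_u+s)$-sparsity of $\delta$.

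Chaining the three displays, the prediction norm $y:=\|\tilde x_i'\delta\|_{2,n}$ satisfies a scalar quadratic inequality of the form $(\underline f/4)\,y^2 \leq \hat Q + \big(\bar f\bar r_u + \sqrt{(\hat s_u+s)\log(p/\epsilon)/(n\,\semin{\hat s_u+s})}\big)\,y$, and the quadratic formula delivers $y\lesssim \widetilde N$, on an event of probability at least $1-\gamma-\epsilon-o(1)$. I expect the main obstacle to be the empirical-process step: the support of $\widetilde\eta_u$ is data-dependent, so one must enlarge the class to all $(\hat s_u+s)$-sparse vectors (paying a $\log\binom{p}{\hat s_u+s}$ factor in the union bound) and then convert the $\ell_2$ norm naturally arising from symmetrization into the prediction norm driving the identification bound via the sparse minimum eigenvalue --- it is precisely this conversion that produces the $\semin{\hat s_u+s}$ factor appearing in $\widetilde N$.
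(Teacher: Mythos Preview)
Your overall architecture matches the paper's: suboptimality gap from feasibility of $\hat\eta_u$, a quadratic minoration of the population loss via Knight's identity, an empirical-process bound over $(\hat s_u+s)$-sparse deviations obtained by symmetrization/contraction and converted to the prediction norm via $\semin{\hat s_u+s}$, then a scalar inequality in $y:=\|\tilde x_i'\delta\|_{2,n}$.

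There is, however, a genuine gap in your curvature step. You claim the side condition forces the cubic remainder $\tfrac{\bar f'}{6}\En[|\tilde x_i'\delta|^3]$ to be $o(\|\tilde x_i'\delta\|_{2,n}^2)$. But the side condition only gives
\[
\En[|\tilde x_i'\delta|^3]\;\leq\;\Big(\sup_{\|\bar\delta\|_0\leq \hat s_u+s}\tfrac{\En[|\tilde x_i'\bar\delta|^3]}{\En[|\tilde x_i'\bar\delta|^2]^{3/2}}\Big)\,y^{3}
\;=\;o\!\Big(\tfrac{y^{3}}{\widetilde N}\Big),
\]
so $\En[|\tilde x_i'\delta|^3]=o(y^2)$ holds only when $y\lesssim\widetilde N$, which is precisely the conclusion you are trying to prove. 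Without an a priori localization, the left side of your scalar inequality is $y^2(\underline f/2-\tfrac{\bar f'}{6}C_3\,y)$ with $C_3$ the cubic/quadratic ratio, and for large $y$ this becomes negative; the inequality is then vacuous and nothing can be concluded. This is why the paper's identification lemma states the lower bound as a \emph{minimum} of a quadratic and a linear term (Lemma~\ref{Lemma:IdentificationSparseQRNP}), and why the side condition is phrased with $\widetilde N$ rather than with $y$.

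The paper closes this circularity by a convexity-based localization that you are missing. Suppose the bound fails, so $\|\tilde x_i'(\widetilde\eta_u-\eta_u)\|_{2,n}>N$ for the target $N\asymp\widetilde N$. Because the empirical quantile loss is convex, on the segment from $\eta_u$ to $\widetilde\eta_u$ there is an intermediate point $\eta_u+\widetilde\delta_u$ with $\|\tilde x_i'\widetilde\delta_u\|_{2,n}=N$, the same sparsity $\|\widetilde\delta_u\|_0\leq\hat s_u+s$, and still satisfying $\En[\rho_u(\tilde y_i-\tilde x_i'(\eta_u+\widetilde\delta_u))]-\En[\rho_u(\tilde y_i-\tilde x_i'\eta_u)]\leq\hat Q$. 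Now the side condition legitimately applies (since $\|\tilde x_i'\widetilde\delta_u\|_{2,n}=N\asymp\widetilde N$), the minoration lemma returns the quadratic branch $\tfrac{\underline f}{4}N^2$, and your scalar inequality yields $N<N$, a contradiction. Inserting this localization step fixes your argument; everything else you wrote is correct and parallels the paper.
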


Lemma \ref{Thm:MainTwoStepGeneric} provides the rate of convergence in the prediction norm for the post model selection estimator despite of possible imperfect model selection. In the current nonparametric setting it is unlikely for the coefficients to exhibit a large separation from zero. The rates rely on the overall quality of the selected model by $\ell_1$-penalized quantile regression and the overall number of components $\hat s_u$. Once again the extra growth condition required for identification is mild.  For more general designs we have $$\inf_{\|\delta\|_0 \leq \hat s_u + s} \frac{\|\tilde x_i'\delta\|_{2,n}^{3}}{\En[|\tilde x_i'\delta|^3]}\geq \inf_{\|\delta\|_0 \leq \hat s_u + s} \frac{\|\tilde x_i'\delta\|_{2,n}}{ \|\delta\|_1\max_{i\leq n}\|\tilde x_i\|_\infty} \geq \frac{\sqrt{\semin{\hat s_u+s}}}{\sqrt{\hat s_u + s}\max_{i\leq n}\|\tilde x_i\|_\infty}.$$

\subsection{Lasso with Estimated Weights}\label{Sec:EstLasso}

In this section we consider the equation
\begin{equation}\label{Def:ModelEstLasso} f_id_i = f_ix_i'\theta_\tau + f_ir_{\mtau i} + v_i, \ \ i=1,\ldots, \  \barEp[ f_iv_ix_i ] = 0\end{equation}
where we observe $\{(d_i,z_i, x_i=X(z_i)):i=1,\ldots,n\}$,  which are independent across $i$.
We do not observe $\{f_i = f_\tau(d_i, z_i)\}_{i=1}^n$ directly and only estimates $\{\hat f_i\}_{i=1}^n$ are available. Importantly, we only  require that $ \barEp[ f_iv_ix_i ] = 0$ and not $\Ep[f_ix_iv_i]=0$ for every $i=1,\ldots,n$. Also, we have that $T_\mtau=\supp(\theta_\tau)$ is unknown but a sparsity condition holds, namely $|T_\mtau| \leq s$. To estimate $\theta_\mtau$ and $v_i$, we compute
\begin{equation}\label{EstLasso}\hat \theta_\tau \in \arg \min_{\theta} \En[ \hat f_i^2 ( d_i - x_i'\theta)^2] + \frac{\lambda}{n}\|\hat \Gamma_\tau\theta\|_1  \ \ \mbox{and set} \ \ \hat v_i = \hat f_i (d_i - x_i'\hat \theta_\tau ), \ \ i=1,\ldots,n, \end{equation}
where $\lambda$ and $\hat \Gamma_\tau$ are the associated penalty level and loadings specified below. A difficulty is to account for the impact of estimated weights $\hat f_i$ while also only using $\barEp[ f_iv_ix_i ] = 0$. 

We will establish bounds on the penalty parameter $\lambda$ so that with high probability
the following regularization event occurs
\begin{equation}\label{Eq:reg} \frac{\lambda}{n} \geq 2c \|\hat \Gamma^{-1}_\tau \En[ f_i x_i v_i ]\|_\infty.
\end{equation} As discussed in \cite{BickelRitovTsybakov2009,BC-PostLASSO,BCW-SqLASSO}, the event above allows to exploit the restricted set condition $\|\hat\theta_{\tau T^c_\mtau}\|_1\leq \tilde \cc \|\hat\theta_{\tau T_\mtau}-\theta_\tau\|_1$ for some $\tilde\cc >1$. Thus rates of convergence for $\hat \theta_\tau$ and $\hat v_i$ defined on (\ref{EstLasso}) can be established based on the restricted eigenvalue $ \kappa_{\tilde\cc}$ defined in (\ref{Def:RestrEig}) with $\tilde x_i = x_i$. 

However, the estimation error in the estimate $\hat f_i$ of $f_i$ could slow the rates of convergence. The following are sufficient high-level conditions. In what follows $\underline c , \bar c, C, \underline{f}, \bar{f}$ are strictly positive constants independent of $n$.

{\bf Condition WL.} {\it  For the model (\ref{Def:ModelEstLasso}) suppose that:\\
 (i) for $s\geq 1$ we have $\|\theta_\tau\|_0\leq s$, $\Phi^{-1}(1-\gamma/2p) \leq \delta_n n^{1/6},$ \\
(ii) $\underline{f} \leq f_i \leq \bar{f}$,  $\underline c \leq {\displaystyle \min_{j\leq p}} \{\barEp[|f_ix_{ij}v_i-\Ep[f_ix_{ij}v_i]|^2]\}^{1/2} \leq {\displaystyle \max_{j\leq p} } \{\barEp[|f_ix_{ij}v_i|^3]\}^{1/3} \leq C,$\\
(iii) with probability $1-\Delta_n$ we have  $\En[ \hat f_i^2 r_{\mtau i}^2] \leq  c_r^2$, 
{\small $$\begin{array}{c}
{\displaystyle \max_{j\leq p}} |(\En-\barEp)[f_i^2x_{ij}^2v_i^2]|+|(\En-\barEp)[\{f_ix_{ij}v_i-\Ep[f_ix_{ij}v_i]\}^2]| \leq \delta_n,  \\   {\displaystyle \max_{j\leq p}} \  \En[(\hat f_i - f_i)^2x_{ij}^2v_i^2] \leq \delta_n,    \ \ \En\left[ \frac{(\hat f_i^2 - f_i^2)^2}{f_i^2}v_i^2\right] + \En\left[ \frac{(\hat f_i^2 - f_i^2)^2}{\hat f_i^2f_i^2}v_i^2\right]\leq c_f^2.
\end{array}$$}
(iv) $\ell \widehat \Gamma_{\tau 0} \leq \widehat \Gamma_{\tau} \leq u \widehat \Gamma_{\tau 0}$, for $\widehat \Gamma_{\tau 0jj}=\{\En[\hat f_i^2 x_{ij}^2v_i^2]\}^{1/2}$,  $1 - \delta_n\leq \ell \leq u  \leq C$ with prob $1-\Delta_n$.}

\begin{remark}Condition WL(i) is a standard condition on the approximation error that yields the optimal bias variance trade-off (see \cite{BC-PostLASSO}) and imposes a growth restriction on $p$ relative to $n$, in particular $\log p = o(n^{1/3})$. Condition WL(ii) imposes conditions on the conditional density function and mild moment conditions which are standard in quantile regression models even with fixed dimensions, see \cite{koenker:book}. Condition WL(iii) requires high-level rates of convergence for the estimate $\hat f_i$. Several primitive moment conditions imply first requirement in Condition WL(iii). These conditions allow the use of self-normalized moderate deviation theory to control heteroscedastic non-Gaussian errors similarly to \cite{BellChenChernHans:nonGauss} where there are no estimated weights. Condition WL(iv) corresponds to the asymptotically valid penalty loading in \cite{BellChenChernHans:nonGauss} which is satisfied by the proposed choice $\widehat \Gamma_\tau$ in (\ref{Parameter:Gamma}).
\end{remark}

Next we present results on the performance of the estimators generated by Lasso with estimated weights. In what follows, $\hat \kappa_\cc$ is defined with $\hat f_i x_i$ instead of $\tilde x_i$ in (\ref{Def:RestrEig}) so that $\hat\kappa_\cc \geq \kappa_\cc \min_{i\leq n} \hat f_i$.

\begin{lemma}[Rates of Convergence for Lasso]\label{Thm:RateEstimatedLasso}
Under Condition WL and setting $\lambda \geq 2c'\sqrt{n}\Phi^{-1}(1-\gamma/2p)$ for $c'>c>1$, we have for $n$ large enough with probability $1-\gamma-o(1)$
$$\begin{array}{l}
 \displaystyle \|\hat f_i x_i'(\hat\theta_\tau - \theta_\tau)\|_{2,n} \leq  2\{ c_f+ c_r\} + \frac{\lambda\sqrt{s}}{n\hat\kappa_{\tilde \cc}}\left(u+\frac{1}{c}\right)\\
 \displaystyle \|\hat\theta_\tau-\theta_\tau\|_1  \leq 2 \frac{\sqrt{s}\{ c_f+ c_r\}}{\hat \kappa_{2\tilde\cc}} + \frac{\lambda s }{n\hat\kappa_{\tilde \cc}\hat \kappa_{2\tilde\cc}}\left(u+\frac{1}{c}\right) + \left(1+\frac{1}{2\tilde\cc}\right)\frac{2c\|\hat\Gamma_{\tau 0}^{-1}\|_\infty}{\ell c-1}\frac{n}{\lambda}\{ c_f+ c_r\}^2\end{array}$$
where $\tilde \cc = \|\hat \Gamma_{\tau 0}^{-1}\|_\infty\|\hat \Gamma_{\tau 0}\|_\infty(uc+1)/(\ell c - 1)$
\end{lemma}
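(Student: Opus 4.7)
My plan is to adapt the heteroscedastic weighted-Lasso analysis of \cite{BellChenChernHans:nonGauss} while carefully tracking three distinct sources of noise in the score $\En[\hat f_i^2(d_i - x_i'\theta_\tau)x_i]$. Let $\hat\delta := \hat\theta_\tau - \theta_\tau$, $T := \supp(\theta_\tau)$, and $\hat Q := \|\hat f_i x_i'\hat\delta\|_{2,n}$. First I show that on an event of probability $1-\gamma-o(1)$ the regularization event
$\tfrac{\lambda}{n} \geq 2c\,\|\hat\Gamma_\tau^{-1}\En[f_i v_i x_i]\|_\infty$
holds; this follows by applying a self-normalized moderate-deviation inequality componentwise to the coordinates $\{f_i v_i x_{ij}\}_i$ under the moment conditions in WL(ii), combined with the loading lower bound $\hat\Gamma_\tau \geq \ell\hat\Gamma_{\tau 0}$ from WL(iv), the concentration in WL(iii), and the penalty rate $\lambda \geq 2c'\sqrt{n}\Phi^{-1}(1-\gamma/2p)$ with $c'>c$; the growth condition $\Phi^{-1}(1-\gamma/2p)\leq \delta_n n^{1/6}$ is exactly what makes the self-normalized deviation bound applicable.

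Next, optimality of $\hat\theta_\tau$ combined with convexity of the weighted least-squares loss gives
$$\hat Q^2 \leq 2\,\En[\hat f_i^2(d_i - x_i'\theta_\tau)x_i'\hat\delta] + \tfrac{\lambda}{n}\bigl(\|\hat\Gamma_\tau\theta_\tau\|_1 - \|\hat\Gamma_\tau\hat\theta_\tau\|_1\bigr).$$
Using the identity $\hat f_i(d_i - x_i'\theta_\tau) = \hat f_i r_{m\tau i} + (\hat f_i/f_i)v_i$, I split the cross term into (a) the approximation piece $\En[\hat f_i r_{m\tau i}\cdot \hat f_i x_i'\hat\delta]$, bounded by $c_r\hat Q$ via Cauchy--Schwarz and WL(iii); (b) the true score $\En[f_i v_i x_i'\hat\delta]$, bounded by $\tfrac{\lambda}{2cn}\|\hat\Gamma_\tau\hat\delta\|_1$ on the event of Step 1 via H\"older; and (c) the weight-estimation noise $\En[((\hat f_i^2 - f_i^2)/f_i)v_i x_i'\hat\delta]$, bounded by $c_f\hat Q$ via Cauchy--Schwarz and WL(iii), absorbing the unweighted prediction norm into $\hat Q$ since $f_i$ and $\hat f_i$ are comparable under WL. Combining with the triangle inequality $\|\hat\Gamma_\tau\theta_\tau\|_1 - \|\hat\Gamma_\tau\hat\theta_\tau\|_1 \leq \|\hat\Gamma_{\tau T}\hat\delta_T\|_1 - \|\hat\Gamma_{\tau T^c}\hat\delta_{T^c}\|_1$ produces the master inequality
$$\hat Q^2 \leq 2(c_r + c_f)\hat Q + \tfrac{\lambda}{n}\left[(1+\tfrac{1}{c})\|\hat\Gamma_{\tau T}\hat\delta_T\|_1 - (1-\tfrac{1}{c})\|\hat\Gamma_{\tau T^c}\hat\delta_{T^c}\|_1\right].$$

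I then split into two regimes. If $\hat Q \geq 4(c_r+c_f)$ so that $2(c_r+c_f)\hat Q$ can be absorbed on the left, $\hat\delta$ falls in the restricted cone $\|\hat\delta_{T^c}\|_1 \leq \tilde\cc\,\|\hat\delta_T\|_1$, with $\tilde\cc = \|\hat\Gamma_{\tau 0}^{-1}\|_\infty\|\hat\Gamma_{\tau 0}\|_\infty(uc+1)/(\ell c - 1)$ arising from converting between $\hat\Gamma_\tau$-weighted and plain $\ell_1$ norms via WL(iv); the restricted eigenvalue $\hat\kappa_{\tilde\cc}$ then gives $\|\hat\delta_T\| \leq \hat Q/\hat\kappa_{\tilde\cc}$, which substituted back yields the sharp term $\hat Q \leq \lambda\sqrt{s}(u+1/c)/(n\hat\kappa_{\tilde\cc})$. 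Otherwise $\hat Q \leq 4(c_r+c_f)$ directly from the master inequality. For the $\ell_1$ bound I decompose $\|\hat\delta\|_1 = \|\hat\delta_T\|_1 + \|\hat\delta_{T^c}\|_1$ and apply $\|\hat\delta_T\|_1 \leq \sqrt{s}\,\hat Q/\hat\kappa_{2\tilde\cc}$ (the inflation $\tilde\cc\to 2\tilde\cc$ absorbs a sparse-projection step) together with the restricted-cone inequality in the first regime; in the slack-dominated branch the discarded term $(1-1/c)\|\hat\Gamma_{\tau T^c}\hat\delta_{T^c}\|_1$ is controlled by $(n/\lambda)[2(c_r+c_f)\hat Q - \hat Q^2]$, which with $\hat Q \lesssim c_r+c_f$ yields the third term proportional to $(n/\lambda)(c_r+c_f)^2$ once the $\hat\Gamma_\tau$-weighting is undone via WL(iv).

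The main obstacle is piece (c) of the noise decomposition: the weight-estimation noise $\En[((\hat f_i^2 - f_i^2)/f_i)v_i x_i]$ cannot be dominated by the penalty level $\lambda$ through any self-normalized deviation bound, because its coordinate-wise scale is governed by $\hat f_i - f_i$ rather than by $v_i$ alone, and only the $L^2$-type aggregate $c_f$ is available for control. The two-regime case split is designed precisely so that this uncontrollable contribution does not corrupt the restricted-eigenvalue rate when $c_f$ is small, while still furnishing a valid (slower) fallback bound when $c_f$ is large relative to $\lambda/n$, explaining the additive structure of the prediction and $\ell_1$ rates in the statement.
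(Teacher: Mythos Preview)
Your proposal is correct and follows essentially the same approach as the paper: optimality of the Lasso, the same three-way decomposition of the cross term (score handled via self-normalized moderate deviations, approximation and weight-estimation pieces via Cauchy--Schwarz yielding $c_r\hat Q$ and $c_f\hat Q$), and a case split combined with the restricted eigenvalue. Two small organizational differences are worth noting. First, the paper converts $\hat\Gamma_\tau$ to $\hat\Gamma_{\tau 0}$ \emph{before} merging the penalty with the score term, so the coefficients in the master inequality are $(u+1/c)$ and $(\ell-1/c)$ in the $\hat\Gamma_{\tau 0}$-norm; this is what produces exactly $\tilde\cc=(uc+1)/(\ell c-1)\|\hat\Gamma_{\tau 0}\|_\infty\|\hat\Gamma_{\tau 0}^{-1}\|_\infty$, whereas your route through $(1+1/c)$, $(1-1/c)$ in the $\hat\Gamma_\tau$-norm would give the slightly different constant $u(c+1)/(\ell(c-1))$ after conversion. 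Second, the paper splits directly on whether $\hat\delta\in\Delta_{\tilde\cc}$ rather than on the size of $\hat Q$: if $\hat\delta\notin\Delta_{\tilde\cc}$ the bracket is nonpositive and $\hat Q^2\leq 2(c_f+c_r)\hat Q$ immediately; if $\hat\delta\in\Delta_{\tilde\cc}$ one bounds $\|\hat\Gamma_{\tau 0}\hat\delta_T\|_1\leq\sqrt{s}\,\hat Q/\hat\kappa_{\tilde\cc}$ and divides through by $\hat Q$. This cone-based split delivers the exact additive bound with constant $2$ in the statement without the threshold juggling your size-based split requires.
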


Lemma \ref{Thm:RateEstimatedLasso} above establishes the rate of convergence for Lasso with estimated weights. This automatically leads to bounds on the estimated residuals $\hat v_i$ obtained with Lasso through the identity
\begin{equation}\label{Eq:Identityv}\hat v_i - v_i = (\hat f_i -f_i) \frac{v_i}{f_i} + \hat f_i x_i'(\theta_\tau - \hat\theta_\tau) + \hat f_i r_{\mtau i}.\end{equation}
The Post-Lasso estimator applies the least squares estimator to the model selected by the Lasso estimator (\ref{EstLasso}),
$$ \widetilde\theta_\tau \in \arg\min_{\theta\in \RR^p} \ \left\{ \ \En[\hat f_i^2(d_i - x_i'\theta)^2] \ : \ \theta_j = 0, \ \mbox{if} \ \hat\theta_{\tau j} = 0 \ \right\}, \ \ \mbox{set} \ \tilde v_i = \hat f_i (d_i - x_i'\widetilde\theta_\tau).$$
It aims to remove the bias towards zero induced by the $\ell_1$-penalty function which is used to select components. Sparsity properties of the Lasso estimator $\hat\theta_\tau$ under estimated weights follows similarly to the standard Lasso analysis derived in \cite{BellChenChernHans:nonGauss}. By combining such sparsity properties and the rates in the prediction norm we can establish rates for the post-model selection estimator under estimated weights. The following result summarizes the properties of the Post-Lasso estimator.

\begin{lemma}[Model Selection Properties of Lasso
and Properties of Post-Lasso]\label{Thm:EstLassoPostRate} Suppose that Condition WL holds, and  $\kappa' \leq \semin{\{s + \frac{n^2}{\lambda^2}\{c_f^2 + c_r^2\}\}/\delta_n} \leq \semax{\{s + \frac{n^2}{\lambda^2}\{c_f^2 + c_r^2\}\}/\delta_n} \leq \kappa''$ for some positive and bounded constants $\kappa', \kappa''$.
Then the data-dependent model $\widehat T_\mtau$ selected by the Lasso estimator with $\lambda \geq 2c'\sqrt{n}\Phi^{-1}(1-\gamma/2p)$ for $c'>c>1$, satisfies with probability $1-\gamma-o(1)$:
\begin{equation}\label{eq: sparsity bound}
\|\widetilde \theta_\tau \|_0 = | \widehat T_\mtau | \lesssim s + \frac{n^2}{\lambda^2}\{ c_f^2 +  c_r^2\}
 \end{equation}
Moreover, the corresponding Post-Lasso estimator obeys with probability $1-\gamma-o(1)$
$$ \| x_i'(\widetilde \theta_\tau -\theta_\tau)\|_{2,n} \lesssim_P  c_f +  c_r + \sqrt{\frac{| \widehat T_\mtau |\log (p \vee n)}{n}} + \frac{\lambda\sqrt{s}}{n\kappa_\cc}.$$\end{lemma}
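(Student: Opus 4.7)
The plan is to handle the two statements in turn, both building on the Lasso prediction rate from Lemma \ref{Thm:RateEstimatedLasso} together with the sparse-eigenvalue hypothesis of this lemma.

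For the sparsity claim \eqref{eq: sparsity bound}, I would begin from the KKT conditions of the weighted Lasso: for every $j\in \widehat T_\mtau = \supp(\hat\theta_\tau)$,
\[
\bigl|2\En[\hat f_i^2 x_{ij}(d_i - x_i'\hat\theta_\tau)]\bigr| = (\lambda/n)\hat\Gamma_{\tau jj}.
\]
Squaring, summing over $j\in \widehat T_\mtau$ and using the decomposition $\hat f_i^2(d_i - x_i'\hat\theta_\tau) = \hat f_i^2 x_i'(\theta_\tau - \hat\theta_\tau) + \hat f_i^2 r_{\mtau i} + (\hat f_i^2/f_i) v_i$ reduces the problem to bounding three sums over $j\in \widehat T_\mtau$. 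The design-design sum is at most $\semax{|\widehat T_\mtau|}[\En[\hat f_i^2 x_i x_i']]\cdot \|\hat f_i x_i'(\theta_\tau - \hat\theta_\tau)\|_{2,n}^2$ by restricting the weighted Gram matrix to its $\widehat T_\mtau$-rows, and the approximation sum is analogously bounded by $\semax{|\widehat T_\mtau|}\cdot c_r^2$ via Condition WL(iii). For the noise sum I would split $\hat f_i^2/f_i = f_i + (\hat f_i^2-f_i^2)/f_i$: the first piece is controlled through the regularization event \eqref{Eq:reg} by $|\widehat T_\mtau|(\lambda/(2cn))^2\max_j \hat\Gamma_{\tau jj}^2$, while the second is at most $\semax{|\widehat T_\mtau|}\cdot c_f^2$ by Condition WL(iii). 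Inserting the Lasso prediction rate $\|\hat f_i x_i'(\hat\theta_\tau - \theta_\tau)\|_{2,n}\lesssim c_f+c_r+\lambda\sqrt{s}/n$ and using that $\hat\Gamma_{\tau jj}$ is bounded above and below by positive constants (Condition WL(iv)), these estimates reduce to the sublinear inequality $|\widehat T_\mtau|\lesssim \semax{|\widehat T_\mtau|}\bigl[s+n^2(c_f^2+c_r^2)/\lambda^2\bigr]$, which the sparse-eigenvalue hypothesis of the lemma closes by the now-standard monotonicity argument of \cite{BellChenChernHans:nonGauss}.

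For the Post-Lasso rate, I would use optimality of $\widetilde\theta_\tau$ over $\hat\theta_\tau$ on $\widehat T_\mtau$, namely $\En[\hat f_i^2(d_i - x_i'\widetilde\theta_\tau)^2]\leq \En[\hat f_i^2(d_i - x_i'\hat\theta_\tau)^2]$, and expand both sides about $\theta_\tau$ to obtain
\[
\|\hat f_i x_i'(\widetilde\theta_\tau - \theta_\tau)\|_{2,n}^2 \leq \|\hat f_i x_i'(\hat\theta_\tau - \theta_\tau)\|_{2,n}^2 + 2\bigl|\En[\hat f_i^2(d_i - x_i'\theta_\tau) x_i'\delta]\bigr|,
\]
where $\delta := \widetilde\theta_\tau - \hat\theta_\tau$ is supported on $\widehat T_\mtau$. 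Writing $\hat f_i^2(d_i - x_i'\theta_\tau) = \hat f_i^2 r_{\mtau i} + (\hat f_i^2/f_i)v_i$ and again splitting $\hat f_i^2/f_i = f_i + (\hat f_i^2 - f_i^2)/f_i$, the cross term is bounded up to constants by
\[
c_r\|\hat f_i x_i'\delta\|_{2,n} + c_f\|x_i'\delta\|_{2,n} + \sqrt{|\widehat T_\mtau|\log(p\vee n)/n}\,\|\delta\|,
\]
where the last piece uses the sparsity of $\delta$ together with the self-normalized moderate deviation bound on $\max_j|\En[f_i v_i x_{ij}]|$ already invoked in Lemma \ref{Thm:RateEstimatedLasso}. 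Converting $\|\delta\|$ and $\|x_i'\delta\|_{2,n}$ to $\|\hat f_i x_i'\delta\|_{2,n}$ via the lower bound on $\hat f_i$ and the sparse-eigenvalue condition on $\widehat T_\mtau\cup T_\mtau$ (which is controlled by Step~1 combined with the hypothesis of the lemma), then using the triangle inequality $\|x_i'\delta\|_{2,n} \leq \|x_i'(\widetilde\theta_\tau - \theta_\tau)\|_{2,n} + \|x_i'(\hat\theta_\tau - \theta_\tau)\|_{2,n}$ and absorbing with Young's inequality, one arrives at $\|x_i'(\widetilde\theta_\tau - \theta_\tau)\|_{2,n}\lesssim c_f+c_r+\sqrt{|\widehat T_\mtau|\log(p\vee n)/n}+\lambda\sqrt{s}/(n\kappa_\cc)$, as claimed.

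The main obstacle in both parts is cleanly separating the genuine randomness in $v_i$ from the nuisance error in the weight $\hat f_i - f_i$. The identity $\hat f_i^2/f_i = f_i + (\hat f_i^2 - f_i^2)/f_i$ is the key technical device: the first summand is an unperturbed sparse empirical process that is controlled by the standard Lasso regularization event and self-normalized moderate deviation theory, while the second summand's $L^2(\Pr_n)$-norm is, by design, bounded by $c_f$ under Condition WL(iii). Everything downstream is careful bookkeeping of constants and sparse-eigenvalue inversions.
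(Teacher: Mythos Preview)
Your proposal is correct and, for the sparsity bound, proceeds exactly as the paper does: the KKT conditions combined with the four-term decomposition you write down are precisely the content of the paper's Lemma~\ref{Lemma:SparsityLASSO}, and your closing ``monotonicity argument'' is the paper's Lemma~\ref{Thm:Sparsity} (via sublinearity of $\semax{\cdot}$, Lemma~\ref{Lemma:SparseEigenvalueIMP}).

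For the Post-Lasso rate the paper takes a slightly different but equivalent route (Lemma~\ref{Thm:2StepMain}): rather than using the scalar optimality inequality and expanding, it writes the explicit projection identity $\widehat F m_\tau - \widehat F X\widetilde\theta_\tau = (I-\widehat P_{\widehat T_\mtau})\widehat F m_\tau - \widehat P_{\widehat T_\mtau}\widehat F F^{-1}v$, bounds the noise piece via $\|X[\widehat T_\mtau]'Fv\|_\infty$ and the same $c_f$-device you use, and bounds the approximation piece by plugging in the Lasso solution. Your optimality-inequality route and the paper's projection route are the two standard ways to analyze post-selection least squares; they use the same ingredients (the Lasso rate, the sparse-eigenvalue bounds, and the identity $\hat f_i^2/f_i = f_i + (\hat f_i^2-f_i^2)/f_i$) and deliver the same conclusion.
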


\subsection{Moment Condition based on Orthogonal Score Function}\label{Sec:EstIVQR}

Next we turn to analyze the estimator $\check \alpha_\tau$ obtained based on the orthogonal moment condition. In this section we assume that
$$ L_n(\check \alpha_\tau) \leq \min_{\alpha \in \mathcal{A}_\tau} L_n(\alpha) + \delta_n n^{-1}.$$
This setting is related to the instrumental quantile regression method proposed in \cite{ch:iqrWeakId}. However, in this application we need to account for the estimation of the noise $v$ that acts as the instrument which is known in the setting in \cite{ch:iqrWeakId}. Condition IQR below suffices to make the impact of the estimation of instruments negligible to the first order asymptotics of the estimator $\check \alpha_\tau$. Primitive conditions that imply Condition IQR are provided and discussed in the main text.

Let $\{(y_i,d_i,z_i):i=1,\ldots,n\}$ be independent observations satisfying
\begin{equation}\label{Eq:directRepeat}
\begin{array}{c}y_i = d_i\alpha_\tau + g_\tau(z_i) + \epsilon_i,   \ \ \   \tau\textrm{-quantile}(\epsilon_i\mid d_i, z_i) = 0, \\
f_id_i=f_ix_i'\theta_{0\tau}+v_i, \ \ \ \barEp[ f_ix_iv_i]= 0.
\end{array}
\end{equation}
Letting $\mathcal{D}\times\mathcal{Z}$ denote the domain of the random variables $(d,z)$, for $\tilde h = (\tilde g, \tilde \z)$, where $\tilde g$ is a function of variable $z$, and the instrument $\tilde \z$ is a function that maps $(d,x)\mapsto \tilde \z(d,z)$ we write
$$\begin{array}{rl}
\psi_{\tilde \alpha,\tilde h}(y_i,d_i,z_i) & = \psi_{\tilde \alpha,\tilde g,\tilde \z}(y_i,d_i,z_i) = (\tau - 1\{y_i \leq \tilde g(z_i)+d_i\alpha\})\tilde \z(d_i,z_i)\\
& =  (\tau - 1\{y_i \leq  \tilde g_i+d_i\alpha\})\tilde \z_i.\end{array} $$
We denote $h_0=(g_\tau,\z_0)$ where $\z_{0i}:=v_i=f_i(d_i-x_i'\theta_{0\tau})$. For some sequences $\delta_n\to 0$ and $\Delta_n\to 0$, we let $\overline{\mathcal{F}}$ denote a set of functions such that each element $\tilde h = (\tilde g,\tilde \z)\in \overline{\mathcal{F}}$ satisfies
\begin{equation}\label{Eq:HLfirstestimated}
\begin{array}{c} {\displaystyle \barEp[(1+|\z_{0i}|+|\tilde \z_i-\z_{0i}|)(g_{\tau i}- \tilde g_i)^2] \leq \delta_n n^{-1/2}, \ \ {\displaystyle\barEp[ (\tilde \z_i - \z_{0i})^2] \leq \delta_n,} }\\   \barEp[ |g_{\tau i}- \tilde g_i| |\tilde \z_i - \z_{0i}|] \leq  \delta_n n^{-1/2}, \ \  |\barEp[f_i\z_{0i}\{\tilde g_i-g_{\tau i}\}]|\leq \delta_n n^{-1/2},
\end{array}\end{equation}
and with probability $1-\Delta_n$ we have
\begin{equation}\label{Eq:HLfirstestimatedPartRandom}\displaystyle   \sup_{|\alpha-\alpha_\tau|\leq \delta_n^2, \tilde h \in \overline{\mathcal{F}}}\left|(\En-\barEp)\left[\psi_{\alpha,\tilde h}(y_i,d_i,z_i)-\psi_{\alpha,h_0}(y_i,d_i,z_i)\right]\right| \leq \delta_n \ n^{-1/2}\end{equation}

We assume that the estimated functions $\hat g$ and $\hat \z$ satisfy the following condition.

{\bf Condition IQR.} {\textit Let $\{(y_i,d_i,z_i):i=1,\ldots,n\}$ be random variables independent across $i$ satisfying (\ref{Eq:directRepeat}). Suppose that there are positive constants $0<c\leq C<\infty$ such that:\\
(i) $f_{y_i\mid d_i,z_i}(y\mid d_i,z_i) \leq \bar f$, $f_{y_i\mid d_i,z_i}'(y\mid d_i,z_i) \leq \bar f'$; $c \leq |\barEp[f_id_i\z_{0i}]|$, and $\barEp[ \z_{0i}^4 ] + \barEp[ d_i^4 ] \leq C$;\\
(ii) $\{ \alpha : |\alpha-\alpha_\tau| \leq n^{-1/2}/\delta_n\} \subset \mathcal{A}_\tau$, where $\mathcal{A}_\tau$ is a (possibly random) compact interval; \\
(iii) with probability at least $1-\Delta_n$ the estimated functions $\hat h = (\hat g,\hat \z) \in \overline{\mathcal{F}}$ and
\begin{equation}\label{Eq:HLhatalpha}
|\check \alpha_\tau-\alpha_\tau|\leq \delta_n \ \ \ \ \mbox{and} \ \ \ \ \En[\psi_{\check\alpha_\tau,\hat h}(y_i,d_i,z_i)] |\leq \delta_n\ n^{-1/2}
\end{equation}
(iv)  with probability at least $1-\Delta_n$, the estimated functions $\hat h = (\hat g,\hat \z)$ satisfy $$\|\hat \z_i - \z_{0i}\|_{2,n}  \leq \delta_n \ \ \mbox{and} \ \  \|1\{|\epsilon_i|\leq |d_i(\alpha_\tau-\check\alpha_\tau)+g_{\tau i}-\hat g_i|\}\|_{2,n}\leq \delta_n^2.$$}

\begin{lemma}\label{Thm:EstIVQR}
Under Condition IQR(i,ii,iii) we have
$$ \bar\sigma_n^{-1}\sqrt{n}(\check \alpha_\tau-\alpha_\tau) = \mathbb{U}_n(\tau)+o_P(1), \ \  \mathbb{U}_n(\tau)\rightsquigarrow N(0,1)$$
where $\bar \sigma^2_n = \barEp[f_id_i\z_{0i}]^{-1}\barEp[\tau(1-\tau)\z_{0i}^2]\barEp[f_id_i\z_{0i}]^{-1}$ and
$$\mathbb{U}_n(\tau)=\{\barEp[\psi_{\alpha_\tau,h_0}^2(y_i,d_i,z_i)]\}^{-1/2}\sqrt{n}\En[\psi_{\alpha_\tau,h_0}(y_i,d_i,z_i)].$$
Moreover, IQR(iv) also holds we have $$nL_n(\alpha_\tau) =  \mathbb{U}_n(\tau)^2+o_P(1), \ \ \mathbb{U}_n(\tau)^2\rightsquigarrow \chi^2(1)$$ and the variance estimator is consistent, namely {\small $$\En[\hat f_id_i\hat \z_i]^{-1}\En[(\tau-1\{y_i\leq \hat g_i+d_i\check\alpha_\tau\})^2\hat \z_i^2]\En[\hat f_i d_i \hat \z_i]^{-1}\to_P\barEp[f_id_i\z_{0i}]^{-1}\barEp[\tau(1-\tau)\z_{0i}^2]\barEp[f_id_i\z_{0i}]^{-1}.$$}
\end{lemma}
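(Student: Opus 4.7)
The plan is to linearize the near first-order condition $\sqrt{n}\En[\psi_{\check\alpha_\tau,\hat h}]=o_P(1)$ (supplied by (\ref{Eq:HLhatalpha})) around the infeasible score $\sqrt{n}\En[\psi_{\alpha_\tau,h_0}]$. I start from the decomposition
\begin{equation*}
\sqrt{n}\En[\psi_{\check\alpha_\tau,\hat h}]=\sqrt{n}\En[\psi_{\alpha_\tau,h_0}]+\sqrt{n}(\En-\barEp)[\psi_{\check\alpha_\tau,\hat h}-\psi_{\alpha_\tau,h_0}]+\sqrt{n}\barEp[\psi_{\check\alpha_\tau,\hat h}-\psi_{\alpha_\tau,h_0}],
\end{equation*}
in which the stochastic-equicontinuity middle term is $o_P(1)$ by (\ref{Eq:HLestimated}) combined with Lipschitz continuity in $\alpha$ over the shrinking set $\{|\alpha-\alpha_\tau|\leq\delta_n\}$, which contains $\check\alpha_\tau$ with high probability by (\ref{Eq:HLhatalpha}). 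Although $\psi$ is non-smooth in $y$, the drift $\barEp[\psi_{\alpha,h}]=\tau\barEp[\tilde\z(d_i,z_i)]-\barEp[F_{y_i\mid d_i,z_i}(\tilde g(z_i)+d_i\alpha)\tilde\z(d_i,z_i)]$ is $C^{1}$ in $(\alpha,\tilde g,\tilde\z)$ by IQR(i), and I take all derivatives on this smooth surface.

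Taylor expanding the drift at $(\alpha_\tau,h_0)$, the three first-order directional derivatives give: in $\alpha$, the term $-J_n\sqrt{n}(\check\alpha_\tau-\alpha_\tau)$ with $J_n=\barEp[f_id_i\z_{0i}]$; in $g$, the term $-\sqrt{n}\barEp[f_i\z_{0i}(\hat g_i-g_{\tau i})]$, which is $o_P(1)$ exactly by the orthogonality (\ref{Eq:HLfirstestimatedOrtho}); and in $\z$, a contribution that vanishes identically, since $\tau-F_{y_i\mid d_i,z_i}(g_{\tau i}+d_i\alpha_\tau)=0$ pointwise by the definition of $(\alpha_\tau,g_\tau)$. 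The quadratic remainders are dominated by $\bar f'\sqrt{n}(\max_i|\z_{0i}|)\|\hat g_i-g_{\tau i}\|_{2,n}^2$, $\bar f\sqrt{n}\|\hat\z_i-\z_{0i}\|_{2,n}\|\hat g_i-g_{\tau i}\|_{2,n}$, and cross-terms involving $|\check\alpha_\tau-\alpha_\tau|$; the first two are $o_P(1)$ by the weighted rate and the product rate in (\ref{Eq:HLfirstestimated}), and the third reads $o_P(1)\cdot\sqrt{n}(\check\alpha_\tau-\alpha_\tau)$ and is absorbed into the leading $J_n$-coefficient via the standard bootstrap that first concludes $\sqrt{n}(\check\alpha_\tau-\alpha_\tau)=O_P(1)$ and then substitutes back. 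Combining,
\begin{equation*}
\sqrt{n}(\check\alpha_\tau-\alpha_\tau)=J_n^{-1}\sqrt{n}\En[\psi_{\alpha_\tau,h_0}]+o_P(1),
\end{equation*}
and a Lindeberg CLT under the fourth-moment bounds in IQR(i), together with $\barEp[\psi_{\alpha_\tau,h_0}^2]=\tau(1-\tau)\barEp[\z_{0i}^2]$, yields $\mathbb{U}_n(\tau)\rightsquigarrow N(0,1)$.

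For the $L_n$ claim, the numerator $(\sqrt{n}\En[\psi_{\alpha_\tau,\hat h}])^2$ follows the same decomposition now evaluated at fixed $\alpha=\alpha_\tau$, so no $\alpha$-Taylor term appears and it equals $(\sqrt{n}\En[\psi_{\alpha_\tau,h_0}])^2+o_P(1)$. For the denominator I exploit $1\{\cdot\}^2=1\{\cdot\}$ to write $\En[\psi_{\alpha_\tau,\hat h}^2]=\tau^2\En[\hat\z_i^2]+(1-2\tau)\En[1\{y_i\leq\hat g_i+d_i\alpha_\tau\}\hat\z_i^2]$; the $L_2$-closeness $\|\hat\z_i-\z_{0i}\|_{2,n}=o_P(1)$ from (\ref{Eq:HLfirstestimated}) together with an LLN handles the $\hat\z$-dependence, while swapping $\hat g_i$ for $g_{\tau i}$ inside the indicator costs at most $\bar f\|\hat g_i-g_{\tau i}\|_{2,n}\cdot\{\En[\hat\z_i^4]\}^{1/2}=o_P(1)$ via a change-of-indicator bound using the bounded conditional density. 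This gives $\En[\psi_{\alpha_\tau,\hat h}^2]\to_P\tau(1-\tau)\barEp[\z_{0i}^2]$, and combined with the numerator, $nL_n(\alpha_\tau)=\mathbb{U}_n(\tau)^2+o_P(1)\rightsquigarrow\chi^2(1)$.

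Variance-estimator consistency rerun the denominator analysis with $\check\alpha_\tau$ in place of $\alpha_\tau$ inside the indicator, and the additional change-of-indicator mass is exactly what IQR(iv) controls through $\|1\{|\epsilon_i|\leq|d_i(\alpha_\tau-\check\alpha_\tau)+g_{\tau i}-\hat g_i|\}\|_{2,n}\leq\delta_n^2$. The Jacobian estimator $\En[\hat f_id_i\hat\z_i]\to_P\barEp[f_id_i\z_{0i}]$ follows from Cauchy--Schwarz combined with the $L_2$-rates on $\hat\z_i-\z_{0i}$ and on $\hat f_i-f_i$ implicit in IQR(iii), together with the moment bounds in IQR(i). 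The main obstacle throughout is that $\psi$ is non-differentiable in $y$, so all Taylor expansions must be executed on the smooth drift $\barEp[\psi_{\alpha,h}]$ rather than pathwise; this is precisely why the orthogonality (\ref{Eq:HLfirstestimatedOrtho}) must be imposed as a separate condition rather than being automatic, and why the sharp product rate in (\ref{Eq:HLfirstestimated}), rather than individual $o(n^{-1/4})$ rates in each nuisance, is exactly what is needed to drive the joint second-order remainder to genuine $o_P(n^{-1/2})$.
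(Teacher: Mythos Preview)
Your decomposition and the Taylor analysis of the drift $\barEp[\psi_{\alpha,h}]$ match the paper's Steps 2--4 closely, and you correctly identify why the $g$-derivative is killed by (\ref{Eq:HLfirstestimatedOrtho}) and the $\z$-derivative vanishes identically. There are, however, two genuine gaps.

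\textbf{The stochastic-equicontinuity term in $\alpha$.} You write that $(\En-\barEp)[\psi_{\check\alpha_\tau,\hat h}-\psi_{\alpha_\tau,h_0}]$ is $o_P(n^{-1/2})$ ``by (\ref{Eq:HLestimated}) combined with Lipschitz continuity in $\alpha$.'' Condition (\ref{Eq:HLestimated}) indeed lets you swap $\hat h$ for $h_0$ uniformly in $\alpha$, but you are then left with $(\En-\barEp)[\psi_{\check\alpha_\tau,h_0}-\psi_{\alpha_\tau,h_0}]$, and here $\psi_{\alpha,h_0}$ is \emph{not} Lipschitz in $\alpha$ pathwise---it is an indicator, so $|\psi_{\alpha,h_0}-\psi_{\alpha_\tau,h_0}|$ can equal $|\z_{0i}|$ for arbitrarily small $|\alpha-\alpha_\tau|$. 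The paper treats this as a separate term (IV) and uses a real empirical-process argument: the class $\{\psi_{\alpha,h_0}-\psi_{\alpha_\tau,h_0}:|\alpha-\alpha_\tau|\leq\delta_n\}$ is VC of fixed index with envelope $1\{|\epsilon_i|\leq\delta_n|d_i|\}|\z_{0i}|$, whose $L_2(\Pr)$-norm is $O(\delta_n^{1/2})$ by the density bound in IQR(i). A maximal inequality then yields $\sup_{|\alpha-\alpha_\tau|\leq\delta_n}|\Gn(\psi_{\alpha,h_0}-\psi_{\alpha_\tau,h_0})|\lesssim_P\delta_n^{1/2}$. Your appeal to ``Lipschitz continuity'' skips exactly this step, which is the substantive one.

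\textbf{The $L_n$ denominator.} For $\En[\psi_{\alpha_\tau,\hat h}^2]$ you claim that replacing $\hat g_i$ by $g_{\tau i}$ inside the indicator costs at most $\bar f\,\|\hat g_i-g_{\tau i}\|_{2,n}\{\En[\hat\z_i^4]\}^{1/2}$ ``via a change-of-indicator bound using the bounded conditional density.'' That bound is valid for the \emph{expectation} $\barEp[\cdot]$, not for the empirical average $\En[\cdot]$: the indicator difference $1\{y_i\leq\hat g_i+d_i\alpha_\tau\}-1\{y_i\leq g_{\tau i}+d_i\alpha_\tau\}$ is bounded by $1\{|\epsilon_i|\leq|\hat g_i-g_{\tau i}|\}$, and $\En$ of this is not controlled by $\bar f$ and $\|\hat g_i-g_{\tau i}\|_{2,n}$ alone. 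This is precisely why the lemma places the $L_n$ claim under IQR(iv), whose condition $\|1\{|\epsilon_i|\leq|d_i(\alpha_\tau-\check\alpha_\tau)+g_{\tau i}-\hat g_i|\}\|_{2,n}\leq\delta_n^2$ directly bounds the \emph{sample} average of the indicator difference; the paper then applies Cauchy--Schwarz against $\|\z_{0i}^2\|_{2,n}$ (its relation (\ref{Eq:Denominator})). You invoke IQR(iv) only for the variance estimator at $\check\alpha_\tau$, but you need it (or a separate empirical-process argument) already for the $L_n$ denominator at $\alpha_\tau$. Similarly, your claim that $\En[\hat f_id_i\hat\z_i]\to_P\barEp[f_id_i\z_{0i}]$ uses ``$L_2$-rates on $\hat f_i-f_i$ implicit in IQR(iii),'' but IQR(iii) contains no such rate; this part also sits under IQR(iv) in the lemma and requires auxiliary control of $\hat f_i-f_i$ that you have not supplied.
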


\section{Proofs for Section \ref{Sec:MainResults} of Main Text (Main Result)}

{\bf Proof.}{\bf \ (Proof of Theorem \ref{theorem:inferenceAlg1prime})}
The first-order equivalence between the two estimators follows from establishing the same linear representation for each estimator. In Part 1 of the proof we consider the orthogonal score estimator. In Part 2 we consider the double selection estimator.

{\rm Part 1. Orthogonal score estimator.} We will verify Condition IQR and the result follows by Lemma \ref{Thm:EstIVQR} applied with $\z_{0i}=v_i=f_i(d_i-x_i'\theta_{0\tau})$ and noting that $1\{y_i \leq d_i\alpha_\tau + g_\tau(z_i)\} = 1\{U_i \leq \tau\}$ for some uniform $(0,1)$ random variable (independent of $\{d_i,z_i\}$) by the definition of the conditional quantile function.

Condition IQR(i) requires conditions on the probability density function that are assumed in Condition AS(iii). The fourth moment conditions are implied by Condition M(i) using $\xi=(1,0')'$ and $\xi=(1,-\theta_{0\tau}')'$, since $\barEp[v_i^4]\leq \bar f^4\barEp[ \{ (d_i,x_i')\xi\}^4] \leq C'(1+\|\theta_{0\tau}\|)^4$, and $\|\theta_{0\tau}\|\leq C$ assumed in Condition AS(i).
Finally, by relation (\ref{Eq:indirect}), namely $\Ep[f_ix_iv_i]=0$, we have $\barEp[f_id_iv_i] = \barEp[v_i^2] \geq \underline f \barEp[ (d_i-x_i'\theta_{0\tau})^2] \geq c\underline f\|(1,\theta_{0\tau}')'\|^2$ by Condition AS(iii) and Condition M(i).

Next we will construct the estimate for the orthogonal score function which are based on post-$\ell_1$-penalized quantile regression and post-Lasso with estimated conditional density function. We will show that with probability $1-o(1)$ the estimated nuisance parameters belong to $\overline{\mathcal{F}}$.

To establish the rates of convergence for $\widetilde\beta_\tau$, the post-$\ell_1$-penalized quantile regression based on the thresholded estimator $\hat\beta_\tau^{\lambda_\tau}$, we proceed to provide rates of convergence for the $\ell_1$-penalized quantile regression estimator $\hat\beta_\tau$. We will apply Lemma \ref{Theorem:L1QRnp} with $\gamma=1/n$. Condition PQR holds by Condition AS with probability $1-o(1)$ using Markov inequality and $\barEp[r_{\gtau i}^2]\leq s/n$.

Since population eigenvalues are bounded above and bounded away from zero by Condition M(i), by Lemma \ref{thm:RV34} (where $\bar \delta_n\to 0$ under $K_q^2 Cs \log^2(1+Cs)  \log (pn) \log n =o(n)$), we have that sparse eigenvalues of order $\ell_n s$ are bounded away from zero and from above with probability $1-o(1)$ for some slowly increasing function $\ell_n$. In turn, the restricted eigenvalue $\kappa_{2\cc}$ is also bounded away from zero for bounded $\cc$ and $n$ sufficiently large. Since $\lambda_\tau \lesssim \sqrt{n\log(p\vee n)}$, we will take $N = C\sqrt{s\log(n\vee p)/n}$ in Lemma \ref{Theorem:L1QRnp}. To establish the side conditions note that
\begin{equation*}
\begin{array}{rl}
{\displaystyle \sup_{\delta \in A_\tau}}\frac{\En[|\tilde x_i'\delta|^3]}{\|\tilde x_i'\delta\|_{2,n}^{3}} & \leq {\displaystyle \sup_{\delta \in A_\tau}}\frac{\max_{i\leq n}\|\tilde x_i\|_\infty\|\delta\|_1}{\|\tilde x_i'\delta\|_{2,n}}\leq {\displaystyle \max_{i\leq n}}\|\tilde x_i\|_\infty \left( \frac{\sqrt{s}(1+2\cc)}{\kappa_{2\cc}} \vee \frac{8C\cc s \log(pn)}{\lambda_u N}\right)  \lesssim_P K_q \sqrt{s\log(pn)}.
\end{array}
\end{equation*}
which implies that $N{\displaystyle \sup_{\delta \in A_\tau}}\frac{\En[|\tilde x_i'\delta|^3]}{\|\tilde x_i'\delta\|_{2,n}^{3}} \to 0$ with probability $1-o(1)$ under $K_q^2s^2\log^2(p\vee n) \leq \delta_n n$. Moreover, the second part of the side condition
\begin{equation*}
\begin{array}{rl} {\displaystyle \sup_{\delta \in A_\tau}}\frac{\En[|r_{\gtau i}| \ |\tilde x_i'\delta|^2]}{\|\tilde x_i'\delta\|^2_{2,n}} & \leq  {\displaystyle \sup_{\delta \in A_\tau}}\frac{\En[|r_{\gtau i}| \ |\tilde x_i'\delta|]\max_{i\leq n}\|\tilde x_i\|_\infty  \|\delta\|_1}{\|\tilde x_i'\delta\|^2_{2,n}} \leq {\displaystyle \sup_{\delta \in A_\tau}}\frac{\|r_{\gtau i}\|_{2,n}\max_{i\leq n}\|\tilde x_i\|_\infty  \|\delta\|_1}{\|\tilde x_i'\delta\|_{2,n}} \\
& \leq \max_{i\leq n}\|\tilde x_i\|_\infty \|r_{\gtau i}\|_{2,n} \left( \frac{\sqrt{s}(1+2\cc)}{\kappa_{2\cc}} \vee \frac{8C\cc s \log(pn)}{\lambda_u N}\right) \\ & \lesssim_P \sqrt{\frac{s\log(pn)}{n}}K_q \sqrt{s\log(pn)}.\\
\end{array}\end{equation*}
Under $K_q^2s^2\log^2(p\vee n) \leq \delta_n n$, the side condition holds with probability $1-o(1)$. Therefore, by Lemma \ref{Theorem:L1QRnp} we have $\|\tilde x_i'\hat \beta_\tau - \beta_\tau \|_{2,n} \lesssim \sqrt{ s \log(pn)/n }$ and  $\|\hat \beta_\tau - \beta_\tau \|_1 \lesssim s\sqrt{\log(pn)/n }$ with probability $1-o(1)$.

With the same probability, by Lemma \ref{Lemma:Bound2nNormSecond} with $\mu = \lambda_\tau/n$, since $\semax{Cs}$ is uniformly bounded with probability $1-o(1)$, we have that the thresholded estimator $\hat\beta^\mu_\tau$ satisfies the following bounds with probability $1-o(1)$:  $\|\hat \beta_\tau^\mu - \beta_\tau \|_1 \lesssim s\sqrt{\log(pn)/n }$, $\|\hat\beta_\tau^\mu\|_0 \lesssim s$ and $\|\tilde x_i'(\hat\beta^\mu_\tau - \beta_\tau)\|_{2,n} \lesssim \sqrt{ s \log(pn)/n }$. We use the support of $\hat\beta^\mu_\tau$ as to construct the refitted estimator $\widetilde\beta_\tau$.

We will apply Lemma \ref{Thm:MainTwoStepGeneric}. By Lemma \ref{Lemma:UpperQ} and the rate of $\hat\beta^\mu_\tau$, we can take $\widehat Q = C s\log(pn)/n$. Since sparse eigenvalues of order $Cs$ are bounded away from zero, we will use $\tilde N = C\sqrt{s\log(pn)/n}$ and $\varepsilon = 1/n$. Therefore,
$\|x_i'(\widetilde\beta_\tau-\beta_\tau)\|_{2,n} \lesssim \sqrt{s\log(np)/n}$ with probability $1-o(1)$ provided the side conditions of the lemma hold. To verify the side conditions note that
\begin{equation*}
\begin{array}{rl}
\widetilde N{\displaystyle \sup_{\|\delta\|_0\leq Cs}}\frac{\En[|\tilde x_i'\delta|^3]}{\|\tilde x_i'\delta\|_{2,n}^{3}} & \leq \widetilde N{\displaystyle \sup_{\|\delta\|_0\leq Cs}}\frac{\max_{i\leq n} \|\tilde x_i\|_\infty \|\delta\|_1}{\|\tilde x_i'\delta\|_{2,n}} \\
&\leq \widetilde N {\displaystyle \sup_{\|\delta\|_0\leq Cs}}\frac{\max_{i\leq n} \|\tilde x_i\|_\infty \sqrt{Cs}\|\delta\|}{\sqrt{\semin{Cs}}\|\delta\|}\lesssim_P \sqrt{\frac{s\log(pn)}{n}}K_q\sqrt{s}\\
\end{array}\end{equation*}
\begin{equation*}
\begin{array}{rl}{\displaystyle \sup_{\|\delta\|_0 \leq Cs}}\frac{\En[|r_{\gtau i}| \ |\tilde x_i'\delta|^2]}{\|\tilde x_i'\delta\|^2_{2,n}} & \leq \|r_{\gtau i}\|_{2,n}\max_{i\leq n}\|\tilde x_i\|_\infty {\displaystyle \sup_{\|\delta\|_0 \leq Cs}}\frac{\|\delta\|_1}{\|\tilde x_i'\delta\|_{2,n}} \\
&\leq  \|r_{\gtau i}\|_{2,n} \max_{i\leq n}\|\tilde x_i\|_\infty \sqrt{\frac{Cs}{\semin{Cs}}}  \lesssim_P \sqrt{\frac{s\log(pn)}{n}}K_q \sqrt{s\log(pn)}\\
\end{array}\end{equation*}
and the side condition holds with probability $1-o(1)$ under $K_q^2 s \log^2(p\vee n) \leq \delta_n n$.

Next we proceed to construct the estimator for $v_i$. Note that by the same arguments above, under Condition D, we have the same rates of convergence for the post-selection (after truncation) estimators $(\widetilde \alpha_u,\widetilde \beta_u)$, $u\in \mathcal{U}$, $\|\widetilde\beta_u\|_0\lesssim C$ and $\|(\widetilde \alpha_u,\widetilde \beta_u)-(\widetilde \alpha_u,\widetilde \beta_u)\|\lesssim \sqrt{s\log (pn)/n}$, to estimate the conditional density function via (\ref{Eq:hatf}) or (\ref{Eq:hatfsecond}). Thus with probability $1-o(1)$
\begin{equation}\label{Ratehatf}
\|f_i - \hat f_i\|_{2,n} \lesssim  \frac{1}{h}\sqrt{\frac{s\log (n\vee p)}{n}}+h^{\bar k} \ \ \mbox{and} \ \ \max_{i\leq n}|\hat f_i-f_i| \lesssim  \delta_n\end{equation}
where $\bar k$ depends on the estimator. (See relation (\ref{EstDensityCoverMain}) below.) Note that the last relation implies that $\max_{i\leq n}\hat f_i \leq 2\bar f$ is automatically satisfied with probability $1-o(1)$ for $n$ large enough. Let $\mathcal{U}$ denote the (finite) set of quantile indices used in the calculation of $\hat f_i$.

Next we verify Condition WL to invoke Lemmas \ref{Thm:RateEstimatedLasso} and \ref{Thm:EstLassoPostRate} with $c_r = C\sqrt{s\log(pn)/n}$ and $c_f= C\left( \{1/h\}\sqrt{s\log (n\vee p)/n}+h^{\bar k}\right)$.
The sparsity condition in Condition WL(i) is implied by Condition AS(ii) and $\Phi^{-1}(1-\gamma/2p) \leq \delta_n n^{1/6}$ is implied by $\log (1/\gamma) \lesssim \log(p\vee n)$ and $\log^{3} p \leq \delta_n n$ from Condition M(iii).
Condition WL(ii) follows from the assumption on the density function in Condition AS(iii) and the moment conditions in Condition M(i).

The first requirement in Condition WL(iii) holds with $c_r^2 \lesssim s\log(pn)/n$ since
$$  \En[\hat f_i^2r_{\mtau i}^2] \leq  \max_{i\leq n}\hat f_i \En[r_{\mtau i}^2] \lesssim \bar f s\log(pn)/n $$
from $\max_{i\leq n} \hat f_i \leq C$ holding with probability $1-o(1)$,  and Markov's inequality under $\barEp[r_{\mtau i}^2]\leq Cs/n$ by Condition AS(ii).

The second requirement, $\max_{j\leq p}|(\En-\barEp)[f_i^2x_{ij}^2v_i^2]|+|(\En-\barEp)[\{f_ix_{ij}v_i-\Ep[f_ix_{ij}v_i]\}^2]| \leq \delta_n $ with probability $1-o(1)$,
  is implied by Lemma \ref{thm:RV34} with $k=1$ and $K=\{\Ep[\max_{i\leq n} \|f_i x_iv_i\|_\infty^2]\}^{1/2} \leq \bar f \{\Ep[\max_{i\leq n} \|(x_i,v_i)\|_\infty^4]\}^{1/2}\leq \bar fK_q^{2}$, under $K_q^4 \log(pn) \log n \leq \delta_n n$.

The third requirement of Condition WL(iii) follows from uniform consistency in (\ref{Ratehatf}) and the second part of Condition WL(ii) since
$$ \max_{j\leq p}\En[(\hat f_i - f_i)^2x_{ij}^2v_i^2] \leq  \max_{i\leq n}\frac{|\hat f_i-f_i|^2}{f_i^2} \left\{ \max_{j\leq p}(\En-\barEp)[f_i^2x_{ij}^2v_i^2] + \max_{j\leq p}\barEp[f_i^2x_{ij}^2v_i^2] \right\} \lesssim \delta_n $$
with probability $1-o(1)$ by (\ref{Ratehatf}), the second requirement, and the bounded fourth moment assumption in Condition M(i).

To show the fourth part of Condition WL(iii), because both $\hat f_i$ and $f_i$ are bounded away from zero and from above with probability $1-o(1)$, uniformly over $u\in \mathcal{U}$ (the finite set of quantile indices used to estimate the density), and $\hat f_i^2-f_i^2 = (\hat f_i-f_i)(\hat f_i+f_i)$, it follows that with probability $1-o(1)$
$$ \En[(\hat f_i^2-f_i^2)^2v_i^2/f_i^2] + \En[(\hat f_i^2-f_i^2)^2v_i^2/\{\hat f_i^2f_i^2\}] \lesssim \En[(\hat f_i-f_i)^2v_i^2/f_i^2].$$ Next let $\delta_{u}=(\widetilde\alpha_u-\alpha_u,\widetilde \beta_{u}' - \beta_{u}')'$ where the estimators satisfy Condition D.
We have that with probability $1-o(1)$\begin{equation}\label{BoundEnvf}
\begin{array}{rl}
 &  \En[(\hat f_i-f_i)^2v_i^2/f_i^2] \lesssim h^{2{\bar k}}\En[v_i^2] + h^{-2} {\displaystyle \sum_{u\in \mathcal{U}}} \En[v_i^2 (\tilde x_i'\delta_{u})^2
+v_i^2r_{u i}^2].\\
\end{array}
\end{equation} The following relations hold for all $u\in \mathcal{U}$
 $$\begin{array}{rl}
 \En[v_i^2r_{u i}^2] & \lesssim_P \barEp[v_i^2r_{u i}^2] \leq \bar f \barEp[(d_i-x_i'\theta_{0\tau})^2r_{u i}^2] \lesssim s/n\\
\En[v_i^2 (\tilde x_i'\delta_{u})^2]& = \barEp[ (x_i'\delta_{u})^2v_i^2] + (\En-\barEp)[v_i^2 (x_i'\delta_{u})^2]\\
& \leq C \|\delta_{u}\|^2 + \|\delta_{u}\|^2 \sup_{\|\delta\|_0\leq \|\delta_u\|_0,\|\delta\|=1}| (\En-\barEp)[ \{v_i\tilde x_i'\delta\}^2]|\\
\end{array}
$$
where we have $\|\delta_{u}\|\lesssim \sqrt{s\log (n\vee p) / n}$  and $\|\delta_{u}\|_0 \lesssim s$ with with probability $1-o(1)$. Then we apply Lemma \ref{thm:RV34} with $X_i=v_i\tilde x_i$. Thus, we can take $K=\{\Ep[\max_{i\leq n}\|X_i\|_\infty^2]\}^{1/2} \leq \{\Ep[\max_{i\leq n}\|(v_i,\tilde x_i')\|_\infty^4]\}^{1/2}\leq K_q^2$, and $\barEp[(\delta'X_i)^2] \leq \barEp[v_i^2(\tilde x_i'\delta)^2]\leq C\|\delta\|^2$ by the fourth moment assumption in Condition M(i) and $\|\theta_{0\tau}\|\leq C$. Therefore,
{\small $$\begin{array}{rl}
{\displaystyle \sup_{\|\delta\|_0\leq Cs, \|\delta\| =1}} \left| (\En-\barEp)\[ \{v_i\tilde x_i'\delta\}^2 \]\right|& \lesssim_P   \left\{ \frac{K_q^2 s \log^{3}n \log (p\vee n)}{n} + \sqrt{\frac{K_q^2 s \log^{3}n \log (p\vee n)}{n}}\right\} \\
\end{array}$$}
Under $K_q^4 s \log^{3}n \log (p\vee n) \leq \delta_n n$, with probability $1-o(1)$ we have
$$ c_f^2 \lesssim \frac{s\log (n\vee p)}{h^2n} + h^{2{\bar k}}.$$

Condition WL(iv) pertains to the penalty loadings which are estimated iteratively. In the first iteration we have that the loadings are constant across components as $\varpi := \max_{i\leq n} \hat f_i \max_{i\leq n}\|x_i\|_\infty \{\En[\hat f_i^2 d_i^2]\}^2$. Thus the solution of the optimization problem is the same if we use penalty parameters $\tilde \lambda$ and $\widetilde \Gamma$ defined as $\tilde\lambda := \lambda \varpi/\max_{j\leq p} \{\En[ \hat f_i^2x_{ij}^2v_i^2]\}^{1/2}$ and $\widetilde \Gamma_{jj} =\max_{j\leq p} \{\En[ \hat f_i^2x_{ij}^2v_i^2]\}^{1/2}$. By construction $\widehat\Gamma_{0\tau jj} \leq \widetilde \Gamma_{jj} \leq  C \widehat\Gamma_{0\tau jj}$ for some bounded $C$ with probability $1-o(1)$ as $\widehat\Gamma_{0\tau jj}$ are bounded away from zero and from above with probability $1-o(1)$. Since Condition WL holds for $(\tilde \lambda,\widetilde \Gamma)$, and $\tilde \lambda \lesssim \lambda \max_{i\leq n}\|x_i\|_\infty$,  by Lemma \ref{Thm:RateEstimatedLasso} we have with probability $1-o(1)$ $$\begin{array}{rl}
\displaystyle  \|x_i'(\hat\theta_\tau-\theta_\tau)\|_{2,n} &\displaystyle  \lesssim \frac{1}{h}\sqrt{\frac{s\log (n\vee p)}{n}} + h^{\bar k} + \frac{\lambda \sqrt{s}\max_{i\leq n}\|x_i\|_\infty}{n} 
\end{array}$$
The iterative choice of of penalty loadings satisfies
$$
\begin{array}{rl}
|\En[\hat f_i^2x_{ij}^2v_i^2]^{1/2}- \En[\hat f_i^2x_{ij}^2\hat v_i^2]^{1/2}| & \leq \max_{i\leq n} \hat f_i\|x_i\|_\infty |\En[\{f_i^2(d_i-x_i'\theta_{0\tau})-\hat f_i(d_i-x_i'\hat\theta_\tau)\}^2]^{1/2}|\\
& \leq \max_{i\leq n} \hat f_i\|x_i\|_\infty  \En[ (\hat f_i-f_i)^2v_i^2/f_i^2 ]^{1/2} \\
& + \max_{i\leq n} \hat f_i\|x_i\|_\infty  \En[ \hat f_i^2 \{x_i'(\hat \theta_\tau - \theta_{0\tau})\}^2]^{1/2} \\
& \lesssim_P \frac{K_q }{h}\sqrt{\frac{s\log (n\vee p)}{n}} + K_qh^{\bar k} + \frac{\lambda K_q^2\sqrt{s}}{n} \\
\end{array}
$$ uniformly in $j\leq p$.  Thus the iterated penalty loadings are uniformly consistent with probability $1-o(1)$ and also satisfy Condition WL(iv) under $h^{-2}K_q^2s\log(pn)\leq \delta_n n$ and $K_q^4s\log(pn)\leq \delta_nn$. Therefore, in the subsequent iterations, by Lemma \ref{Thm:RateEstimatedLasso} and Lemma \ref{Thm:EstLassoPostRate} we have that the post-Lasso estimator satisfies with probability $1-o(1)$ $$\begin{array}{rl}
\displaystyle \|\widetilde \theta_\tau\|_0 &\displaystyle  \lesssim \frac{n^2\{ c_f^2+ c_r^2\}}{\lambda^2} + s \lesssim \widetilde s_{\mtau} := s+\frac{ns\log (n\vee p)}{h^2\lambda^2}+ \left(\frac{nh^{\bar k}}{\lambda}\right)^2 \ \mbox{and} \\
\displaystyle  \|x_i'(\widetilde\theta_\tau-\theta_\tau)\|_{2,n} &\displaystyle  \lesssim \frac{1}{h}\sqrt{\frac{s\log (n\vee p)}{n}} + h^{\bar k} + \frac{\lambda\sqrt{s}}{n}\end{array}$$
where we used that $\semax{\widetilde s_\mtau/\delta_n}\leq C$ implied by Condition D and Lemma \ref{thm:RV34}, and that $\lambda \geq \sqrt{n}\Phi^{-1}(1-\gamma/2p)\sim \sqrt{n\log (pn)}$ so that
$ \sqrt{\widetilde s_\mtau \log (pn)/n} \lesssim (1/h)\sqrt{s\log (pn)/n} + h^{\bar k}.$

Next we construct a class of functions that satisfies the conditions required for $\overline{\mathcal{F}}$ that is used in Lemma \ref{Thm:EstIVQR}. Define the following class of functions.
\begin{equation}\label{DefClassesFunctions} \begin{array}{rl}
\mathcal{M}& =\{ x_i'\theta : \|\theta-\theta_{\tau0}\| \leq C\{\frac{1}{h}\sqrt{s\log (pn)/n} + h^{\bar k}+\frac{\lambda\sqrt{s}}{n}\}, \|\theta\|_0\leq C\widetilde s_\mtau  \}\\
\mathcal{G} & = \{ x_i'\beta : \|\beta\|_0\leq Cs, \ \|\beta - \beta_\tau\| \leq C\sqrt{s\log (pn)/n} \} \\
\mathcal{J} & =  \{ \tilde f_i \leq 2\bar f \ : \|\tilde \eta_u\|_0\leq Cs, \|\tilde \eta_u-\eta_u\| \leq C\sqrt{s\log (pn)/n}, u \in \mathcal{U} \} \\
\end{array}\end{equation}
where $\tilde f_i = f(d_i,z_i, \{\tilde \eta_u : u\in\mathcal{U}\}), \tilde{\tilde{f}}_i = f(d_i,z_i, \{\tilde{\tilde{\eta}}_u : u\in\mathcal{U}\})  \in \mathcal{J}$ are functions that satisfy \begin{equation}\label{EstDensityCoverMain}|\tilde f_i - f_i| \leq \frac{4\bar f}{h}  \sum_{u\in \mathcal{U}} |\tilde x_i'(\eta_u - \tilde \eta_u) + r_{ui}|+ 4\bar f h^{\bar k} \ \mbox{and} \ |\tilde f_i - \tilde{\tilde{f}}_i| \leq \frac{4\bar f}{h}  \sum_{u\in \mathcal{U}} |\tilde x_i'(\tilde \eta_u - \tilde{\tilde{\eta_u}})| \end{equation} In particualr, taking $\tilde f_i := \hat f_i \wedge 2\bar f$ where $\hat f_i$ is defined in (\ref{Eq:hatf}) or (\ref{Eq:hatfsecond}). (Due to uniform consistency (\ref{Ratehatf}) the minimum with $2\bar f$ is not binding for $n$ large.)
Therefore we have
\begin{equation}\label{Eq:ExpDensity} \barEp[ (\tilde f_i - f_i)^2 ] \lesssim  h^{2\bar k} + (1/h^2)\sum_{u\in \mathcal{U}} \barEp[ |\tilde x_i'(\eta_u-\tilde\eta_u)|^2] + \barEp[r_{ui}^2] \lesssim h^{2\bar k} +  h^{-2}s\log(pn)/n.\end{equation}
We define the function class $\overline{\mathcal{F}}$ as
$$ \overline{\mathcal{F}} = \{ ( \tilde g, \tilde v  := \tilde f ( d- \tilde m ) ) : \tilde g \in \mathcal{G}, \tilde m \in \mathcal{M}, \tilde f \in \mathcal{J}\}$$
The rates of convergence and sparsity guarantees for $\widetilde\beta_\tau$ and $\widetilde\theta_\tau$, and Condition D implies that the estimates of the nuisance parameters $\hat g_i := x_i'\widetilde\beta_\tau$ and $\hat v_i=\hat f_i(d_i-x_i'\widetilde\theta_{\tau})$ belongs to the proposed $ \overline{\mathcal{F}}$ with probability $1-o(1)$.

We will proceed to verify relations (\ref{Eq:HLfirstestimated}) and (\ref{Eq:HLfirstestimatedPartRandom}) hold under Condition AS, M and D. We begin with (\ref{Eq:HLfirstestimated}). We have
$$ \begin{array}{rl}
\barEp[|\tilde g_i-g_{\tau i}|^2] & \leq 2\barEp[\{x_i'(\tilde\beta-\beta_\tau)\}^2]+2\barEp[r_{\tau i}^2]\lesssim s\log(pn)/n\lesssim \delta_nn^{-1/2}\\
\barEp[|\tilde v_i||\tilde g_i-g_{\tau i}|^2] & \leq  2\barEp[|\tilde v_i||x_i'(\tilde\beta-\beta_\tau)|^2]+2\barEp[|\tilde v_i|r_{\tau i}^2]\\
& \leq  2\{\barEp[\tilde v_i^2\{x_i'(\tilde\beta-\beta_\tau)\}^2]\barEp[\{x_i'(\tilde\beta-\beta_\tau)\}^2]\}^{1/2}+2\{\barEp[\tilde v_i^2r_{\tau i}^2]\barEp[r_{\tau i}^2]\}^{1/2}\\
& \lesssim \{s\log(pn)/n\}^{1/2}\bar f \{ \barEp[ (d-x_i'\tilde\theta)^2\{x_i'(\tilde\beta-\beta_\tau)\}^2]+\barEp[(d-x_i'\tilde\theta)^2r_{\tau i}^2]\}^{1/2} \\
& \lesssim s\log(pn)/n \lesssim \delta_n n^{-1/2}. \end{array} $$
since $\|(1,-\theta)\|\leq C$ and $f_i\vee \tilde f_i \leq 2\bar f$, $\barEp[(\tilde x_i'\xi)^4]\leq C\|\xi\|^4$, $\barEp[(\tilde x_i'\xi)^2r_{\tau i}^2]\leq C\|\xi\|^2\barEp[r_{\tau i}^2]$ and $\barEp[r_{\tau i}^2]\lesssim s/n$ which hold by Conditions AS and M.
Similarly we have
$$ \begin{array}{rl}
\barEp[|\tilde v_i-v_i|\{\tilde g_i-g_{\tau i}\}^2] & \leq  \barEp[|\tilde f_i - f_i||d_i-x_i'\theta_{\tau0}|\{\tilde g_i-g_{\tau i}\}^2]+\barEp[\tilde f_i|x_i'(\tilde\theta-\theta_{\tau0})|\{\tilde g_i-g_{\tau i}\}^2]\\
& \lesssim s\log(pn)/n \lesssim \delta_n n^{-1/2}. \end{array} $$
as $|\tilde f_i - f_i| \leq 2\bar f$. Moreover we have that
$$ \begin{array}{rl} \barEp[ (\tilde v_i - v_i)^2 ] & \leq 2\barEp[ (\tilde f_i - f_i)^2 (d_i-x_i\theta_{\tau0})^2 + \tilde f_i^2 \{ x_i'(\tilde \theta - \theta_{\tau0})\}^2 ] \\
& \leq 4\bar f\barEp[ (\tilde f_i - f_i)^2 ]^{1/2} \barEp[(d_i-x_i'\theta_{\tau0})^4]^{1/2}+2\bar f^2 \barEp[\{ x_i'(\tilde \theta - \theta_{\tau0})\}^2 ]\\
& \lesssim \frac{1}{h}\sqrt{s\log(pn)/n} + h^{\bar k} + \lambda\sqrt{s}/n \lesssim \delta_n \end{array} $$
under (\ref{Eq:ExpDensity}), $h^{-2}s\log(pn) \leq \delta_n^2 n$, $h \leq \delta_n$, and $\lambda\sqrt{s} \leq \delta_n n$.
The next relation follows from
 $$ \begin{array}{rl}
\barEp[|\tilde v_i-v_i||\tilde g_i-g_{\tau i}|] & \leq  \barEp[|\tilde f_i - f_i||d_i-x_i'\theta_{\tau0}||\tilde g_i-g_{\tau i}|]+\barEp[\tilde f_i|x_i'(\tilde\theta-\theta_{\tau0})||\tilde g_i-g_{\tau i}|]\\
& \leq \barEp[|\tilde f_i - f_i|^2]^{1/2}\barEp[|d_i-x_i'\theta_{\tau0}|^2|\tilde g_i-g_{\tau i}|^2]^{1/2}\\
& + \bar f\barEp[|x_i'(\tilde\theta-\theta_{\tau0})|^2]^{1/2}\barEp[|\tilde g_i-g_{\tau i}|^2]^{1/2}\\
& \lesssim \{s\log(pn)/n\}^{1/2}\{ \frac{1}{h}\sqrt{s\log(pn)/n} + h^{\bar k}\} \lesssim \delta_n n^{-1/2}. \end{array} $$
under $h^{-2}s^2\log^2(pn) \leq \delta_n^2 n$ and $h^{\bar k}\sqrt{s\log(pn)} \leq \delta_n$.

Finally, since $|\barEp[f_iv_ir_{\gtau i}]|\leq \delta_n n^{-1/2}$ from Condition M and $\barEp[f_iv_ix_i]=0$ from (\ref{Eq:indirect}), we have
$$ |\barEp[f_iv_i\{\tilde g_i - g_{\tau i}\}]| \leq |\barEp[f_iv_ix_i'(\tilde\beta - \beta_\tau)] |+|\barEp[f_iv_ir_{\tau i}]| \leq \delta_n n^{-1/2}. $$

Next we verify relation (\ref{Eq:HLfirstestimatedPartRandom}). By triangle inequality we have
\begin{equation}\label{FinalStepHope} \begin{array}{rl}
\displaystyle\sup_{|\alpha-\alpha_\tau|\leq \delta_n^2, \tilde h \in \overline{\mathcal{F}}} \left| (\En-\barEp)[ (\tau - 1\{y_i\leq d_i\alpha+\tilde g_i\})\tilde v_i -  (\tau - 1\{y_i\leq d_i\alpha+g_{\tau i}\})v_i ]\right| \\
\displaystyle \leq \sup_{|\alpha-\alpha_\tau|\leq \delta_n^2, \tilde h \in \overline{\mathcal{F}}} \left| (\En-\barEp)[ (1\{y_i\leq d_i\alpha+g_{\tau i}\}- 1\{y_i\leq d_i\alpha+\tilde g_i\})v_i] \right| \\+\sup_{|\alpha-\alpha_\tau|\leq \delta_n^2, \tilde h \in \overline{\mathcal{F}}} \left| (\En-\barEp)[ (\tau - 1\{y_i\leq d_i\alpha+\tilde g_i\})(\tilde v_i - v_i)] \right| \\
\end{array}\end{equation}
Consider the first term of the right hand side in (\ref{FinalStepHope}). Note that $\mathcal{F}_1 := \{ (1\{y_i\leq d_i\alpha+g_{\tau i}\} - 1\{y_i\leq d_i\alpha+\tilde g_i\})v_i :  \tilde g \in \mathcal{G}, |\alpha-\alpha_\tau| \leq \delta_n^2 \} \subset \mathcal{F}_{1a}-\mathcal{F}_{1b}$ where $\mathcal{F}_{1a}:=1\{y_i\leq d_i\alpha+g_{\tau i}\}v_i : |\alpha-\alpha_\tau| \leq \delta_n^2 \}$ is the product of a VC class of dimension 1 with the random variable $v$, and $\mathcal{F}_{1b}:=\{1\{y_i\leq d_i\alpha+\tilde g_{i}\}v_i : \tilde g \in \mathcal{G}, |\alpha-\alpha_\tau| \leq \delta_n^2 \}$ is the product of $v$ with the union of $\binom{p}{Cs}$ VC classes of dimension $Cs$. Therefore, its entropy number  satisfies
$N(\epsilon\|F_1\|_{Q,2},\mathcal{F}_1,\|\cdot\|_{Q,2}) \leq (A/\epsilon)^{C's}$ where we can take the envelope $F_1(y,d,x)=2|v|$. Since for any function $m \in \mathcal{F}_1$ we have $$\begin{array}{rl}
\barEp[ m^2] & = \barEp[ (1\{y_i\leq d_i\alpha+g_{\tau i}\} - 1\{y_i\leq d_i\alpha+\tilde g_i\})^2v_i^2] \\
& \leq \bar f \barEp[ |g_{\tau i} - \tilde g_i| v_i^2]\leq \barEp[ |g_{\tau i} - \tilde g_i|]^{1/2} \barEp[v_i^4]\lesssim \{s\log(pn)/n\}^{1/2},\end{array}$$ from the conditional density function being uniformly bounded and the bounded fourth moment assumption in Condition M(i), by Lemma \ref{lemma:CCK} with $\sigma:= C\{s\log(pn)/n\}^{1/4}$, $a=pn$, $\|F_1\|_{P,2} = \barEp[v_i^2]^{1/2} \leq C$, and $\|M\|_{P,2}\leq K_q$ we have with probability $1-o(1)$
$$\sup_{m\in \mathcal{F}_1}|(\En-\barEp)[m]| \lesssim \frac{\{s\log(pn)/n\}^{1/4}}{n^{1/2}}\sqrt{s\log(pn)}+ n^{-1} s K_q \log(pn) \lesssim \delta_n n^{-1/2}$$
under $s^3\log^3(pn) \leq \delta_n^4 n$ and $K_q^2s^2\log^2(pn)\leq \delta_n^2 n$.

Next consider the second term of the right hand side in (\ref{FinalStepHope}). Note that $\mathcal{F}_2 := \{ (\tau - 1\{y_i\leq d_i\alpha+\tilde g_{i}\})(\tilde v_i-v_i) :  (\tilde g,\tilde v) \in \overline{\mathcal{F}}, |\alpha-\alpha_\tau| \leq \delta_n^2 \} \subseteq \mathcal{F}_{2a} \cdot \mathcal{F}_{2b}$ where $\mathcal{F}_{2a} := \{ (\tau -  1\{y_i\leq d_i\alpha+\tilde g_{i}\}): \tilde g \in \mathcal{G}\}$ is a constant minus the union of $\binom{p}{Cs}$ VC classes of dimension $Cs$, and $\mathcal{F}_{2b} := \{ \tilde v_i - v_i : (\tilde g, \tilde v) \in \overline{\mathcal{F}} \}$. Note that standard entropy calculations yield $$ N(\epsilon \|F_{2a}F_{2b}\|_{Q,2}, \mathcal{F}_2 , \|\cdot\|_{Q,2}) \leq N(\mbox{$\frac{\epsilon}{2}$} \|F_{2a}\|_{Q,2}, \mathcal{F}_{2a} , \|\cdot\|_{Q,2}) \ N(\mbox{$\frac{\epsilon}{2}$}\|F_{2b}\|_{Q,2}, \mathcal{F}_{2b} , \|\cdot\|_{Q,2})$$
Furthermore, since $\tilde v_i - v_i = (\tilde f_i-f_i)v_i/f_i + \tilde f_ix_i'(\theta_{\tau 0}-\tilde \theta)$, we have
$$\mathcal{F}_{2b} \subset \mathcal{F}_{2b}' + \mathcal{F}_{2b}'' := ( \mathcal{J} - \{f_i\} ) \cdot \{ v_i/f_i \} + \mathcal{J} \cdot (\{x_i'\theta_{\tau0}\} - \mathcal{M})$$
and $\mathcal{F}_2 \subset \mathcal{F}_{2a} \cdot  \mathcal{F}_{2b}' + \mathcal{F}_{2a} \cdot \mathcal{F}_{2b}''$.
By (\ref{EstDensityCoverMain}), a covering for $\mathcal{J}$ can be constructed based on a covering for $\mathcal{B}_u:=\{ \tilde \eta_u : \|\tilde\eta_u\|_0 \leq Cs, \|\tilde\eta_u - \eta_u\|\leq C\sqrt{s\log(pn)/n}\}$ which is the union of $\binom{p}{Cs}$ sparse balls of dimension $Cs$. It follows that for the envelope $F_J:= 2\bar f \vee K_q^{-1}\|\tilde x_i\|_{\infty}$, we have
$$  N(\epsilon \|F_J\|_{Q,2}, \mathcal{J},\|\cdot\|_{Q,2}) \leq \prod_{u\in \mathcal{U}} N\left( \mbox{$\frac{\epsilon h/4\bar f}{|\mathcal{U}|K_q \sqrt{2Cs}}$}, \mathcal{B}_u,\|\cdot\|\right)$$
For any $m \in \mathcal{F}_{2a}\cdot \mathcal{F}_{2b}'$ we have
 $$\begin{array}{rl}
 \barEp[m^2] & \leq \barEp[(\tilde f_i-f_i)^2v_i^2/f_i^2] \lesssim  h^{-2}\sum_{u\in \mathcal{U}}\barEp[\{ |\tilde x_i'(\tilde \eta_u - \eta_u)|^2 +r_{ui}^2\}v_i^2/f_i^2] + h^{2\bar k} \barEp[v_i^2/f_i^2]\\
 & \lesssim  h^{-2}s\log(pn)/n+h^{2\bar k}.\end{array} $$
 Therefore, by Lemma \ref{lemma:CCK} with $\sigma:= \frac{1}{h}\sqrt{s\log(pn)/n}+h^{\bar k}$, $a=pn$, $\|F_2'\|_{P,2} = \|(2\bar f\vee K_q^{-1}\|\tilde x_i\|_\infty)v_i/f_i\|_{P,2} \leq (1+2\bar f) K_q $, and $\|M\|_{P,2}\leq (1+2\bar f)K_q$ we have with probability $1-o(1)$
$$\sup_{m\in \mathcal{F}_{2a}\cdot \mathcal{F}_{2b}'}|(\En-\barEp)(m)| \lesssim \frac{\frac{1}{h}\sqrt{s\log(pn)/n}+h^{\bar k}}{n^{1/2}}\sqrt{s\log(pn)}+ n^{-1} s K_q \log(pn) \lesssim \delta_n n^{-1/2}$$
under $h^{-2}s^2\log^2(pn) \leq \delta_n^2 n$, $h^{\bar{k}}\sqrt{s\log(pn)}\leq \delta_n$ and $K_q^2s^2\log^2(pn)\leq \delta_n^2 n$.

Moreover, $\mathcal{M}$ is the union of $\binom{p}{C\widetilde s_{\mtau}}$ VC subgraph of dimension $C\widetilde s_{\mtau}$, and for any $m \in \mathcal{F}_{2a}\cdot \mathcal{F}_{2b}''$ we have
$$\begin{array}{rl}
 \barEp[m^2] & \leq \barEp[\tilde f_i^2 |x_i'(\theta_{\tau 0} - \tilde \theta)|^2] \leq 4\bar f \barEp[|x_i'(\theta_{\tau 0} - \tilde \theta)|^2] \lesssim h^{-2}s\log(pn)/n + h^{2\bar k} + (\lambda/n)^2 s.\end{array} $$
Therefore,  by Lemma \ref{lemma:CCK} with $\sigma:= C\{\frac{1}{h}\sqrt{s\log(pn)/n}+h^{\bar k}+\lambda\sqrt{s}/n\}$, $a=pnK_qs$, $\|F_2''\|_{P,2} = \|(2\bar f\vee K_q^{-1}\|\tilde x_i\|_\infty)\|x_i\|_\infty\|_{P,2} \leq (1+2\bar f) K_q$, and $\|M\|_{P,2}\leq (1+2\bar f)K_q$ we have with probability $1-o(1)$
$$\sup_{m\in \mathcal{F}_{2a}\cdot \mathcal{F}_{2b}''}|(\En-\barEp)(m)|  \lesssim \frac{\frac{1}{h}\sqrt{\frac{s\log(pn)}{n}}+h^{\bar k}+ \frac{\lambda\sqrt{s}}{n}}{n^{1/2}}\sqrt{\widetilde s_{\mtau}\log(pn)}+ n^{-1} \widetilde s_{\mtau} K_q \log(pn) \lesssim \delta_n n^{-1/2}$$
under the conditions $h^{-2}s\widetilde s_{\mtau}\log^2(pn) \leq \delta_n^2 n$, $h^{\bar{k}}\sqrt{\widetilde s_{\mtau}\log(pn)}\leq \delta_n$, $\lambda\sqrt{s\widetilde s_{\mtau} \log(pn)}\leq \delta_n n$ and $K_q^2\widetilde s_{\mtau}^2\log^2(pn)\leq \delta_n^2 n$ assumed in Condition D.

Next we verify the second part of Condition IQR(iii), namely (\ref{Eq:HLhatalpha}). Note that since $\mathcal{A}_\tau \subset \{ \alpha : |\alpha - \alpha_\tau | \leq C / \log n + |\widetilde \alpha_\tau - \alpha_\tau| \leq C\sqrt{s\log(pn)/n}\}$, $\check\alpha_\tau \in \mathcal{A}_\tau$ and $s\log(pn) \leq \delta_n^2 n$ implies the required consistency $|\check\alpha_\tau-\alpha_\tau|\leq \delta_n$.   To show the other relation in (\ref{Eq:HLhatalpha}), equivalent to with probability $1-o(1)$
$$| \En[\psi_{\check\alpha_\tau,\hat h}(y_i,d_i,z_i)] |\leq \delta_n^{1/2}\ n^{-1/2},$$
note that for any $\alpha \in \mathcal{A}_\tau$ (since it implies $|\alpha-\alpha_\tau|\leq \delta_n$) and $\hat h \in \overline{\mathcal{F}}$, we have with probability $1-o(1)$
$$\begin{array}{rl}
 \displaystyle | \En[\psi_{\alpha,\hat h}(y_i,d_i,z_i)] | & \lesssim | \En[\psi_{\alpha_\tau, h_0}(y_i,d_i,z_i)] +  \barEp[f_id_iv_i](\alpha-\alpha_\tau)| \\
 & + O( \delta_n |\alpha-\alpha_\tau|\barEp[d_i^2|v_i|]+\delta_n n^{-1/2})\\
 \end{array}$$
from relations (\ref{Def:IdentityHelp}) with $\alpha$ instead of $\check\alpha_\tau$, (\ref{EqGamma20}), (\ref{BoundGamma2first}), and (\ref{BoundGamma2third}). Letting $\alpha^* = \alpha_\tau - \{\barEp[f_id_iv_i]\}^{-1}\En[\psi_{\alpha_\tau, h_0}(y_i,d_i,z_i)]$, we have $| \En[\psi_{\alpha^*,\hat h}(y_i,d_i,z_i)] | = O(\delta_n^{1/2} n^{-1/2})$ with probability $1-o(1)$ since  $|\alpha^* - \alpha_\tau| \lesssim_P n^{-1/2}$.
Thus, with probability $1-o(1)$
$$\begin{array}{rl}
 \displaystyle \frac{\{ \ \En[\psi_{\check\alpha_\tau,\hat h}(y_i,d_i,z_i)] \ \}^2}{\En[\hat v_i^2]}&\leq L_n(\check \alpha_\tau)\displaystyle \leq  \frac{{\displaystyle \min_{\alpha\in \mathcal{A}_\tau}} \{ \ \En[\psi_{\alpha,\hat h}(y_i,d_i,z_i)] \ \}^2}{\tau^2(1-\tau)^2\En[\hat v_i^2]} \lesssim \delta_n n^{-1}\\
 \end{array} $$ as $\En[\hat v_i^2]$ is bounded away from zero with probability $1-o(1)$.


Next we verify Condition IQR(iv). The first condition follows from the uniform consistency of $\hat f_i$ and $\max_{i\leq n}\|x_i\|_\infty \|\tilde \theta_\tau - \theta_{0\tau}\|_1 \lesssim_P K_q s \sqrt{\log(pn)/n}\to 0$ under $K_q^2s^2\log^2(pn)\leq \delta_n n$. The second condition in IQR(iv) also follows since
{\small $$\begin{array}{rl}
\|1\{|\epsilon_i|\leq |d_i(\alpha_\tau-\check\alpha_\tau)+g_{\tau i}-\hat g_i|\}\|_{2,n}^2 & \leq \En[ 1\{|\epsilon_i|\leq |d_i(\alpha_\tau-\check\alpha_\tau)|+ |x_i'(\widetilde\beta_\tau-\beta_\tau)|+|r_{\gtau i}|\}] \\
 & \leq \En[ 1\{|\epsilon_i|\leq 3|d_i(\alpha_\tau-\check\alpha_\tau)|\}] \\
 & + \En[ 1\{|\epsilon_i|\leq 3 |x_i'(\widetilde\beta_\tau-\beta_\tau)|\}]+\En[ 1\{|\epsilon_i|\leq 3|r_{\gtau i}|\}] \\
 & \lesssim_P \bar f K_q s\sqrt{\log (pn)/n}\end{array}$$}
which implies the result under $K_q^2s^2\log(pn)\leq \delta_n n$.


The consistency of $\hat\sigma_{1n}$ follows from $\|\hat v_i-v_i\|_{2,n}\to_P 0$ and the moment conditions. The consistency of $\hat\sigma_{3,n}$ follow from Lemma \ref{Thm:EstIVQR}. Next we show the consistency of $\hat \sigma_{2n}^2=\{ \En[\check f_i^2 (d_i,x_{i\check T}')'(d_i,x_{i\check  T }')]\}^{-1}_{11}$. Because $f_i\geq \underf$, sparse eigenvalues of size $\ell_ns$ are bounded away from zero and from above with probability $1-\Delta_n$, and $\max_{i\leq n} |\hat f_i - f_i| = o_P(1)$ by Condition D, we have $$ \{ \En[\hat f_i^2 (d_i,x_{i\check  T }')'(d_i,x_{i\check  T }')]\}^{-1}_{11} = \{ \En[ f_i^2 (d_i,x_{i\check T }')'(d_i,x_{i\check  T }')]\}^{-1}_{11} + o_P(1).$$
So that $\hat\sigma_{2n}-\tilde \sigma_{2n}\to_P 0$ for
 $$\tilde \sigma_{2n}^2=\{ \En[ f_i^2 (d_i,x_{i\check T }')'(d_i,x_{i\check  T }')]\}^{-1}_{11} = \{ \En[ f_i^2 d_i^2] - \En[ f_i^2 d_ix_{i\check T }']\{ \En[f_i^2 x_{i\check T }x_{i\check  T }']\}^{-1}\En[ f_i^2 x_{i\check  T } d_i] \}^{-1}.$$
Next define $\check\theta_\tau[\check T] = \{ \En[ f_i^2 x_{i\check  T }x_{i\check T}']\}^{-1}\En[ f_i^2 x_{i\check  T } d_i]$ which is the least squares estimator of regressing $f_i d_i$ on $f_ix_{i\check  T }$. Let $\check \theta_\tau$ denote the associated $p$-dimensional vector. By definition $f_ix_i'\theta_\tau = f_id_i-f_ir_{\mtau}-v_i$, so that
$$\begin{array}{rl}
\tilde  \sigma_{2n}^{-2} & =  \En[ f_i^2 d_i^2] - \En[ f_i^2 d_ix_i'\check\theta_\tau] \\
& =  \En[ f_i^2 d_i^2] - \En[f_i d_i f_ix_i'\theta_\tau] - \En[f_i d_i f_ix_i'(\check\theta_\tau-\theta_\tau)] \\
& =\En[ f_id_i v_i] - \En[f_i d_i f_i r_{\mtau i}]- \En[f_i d_i f_ix_i'(\check\theta_\tau-\theta_\tau )]\\
& =\En[v_i^2] + \En[  v_i \{f_id_i-v_i\} ]- \En[f_i d_i f_i r_{\mtau i}]- \En[f_i d_i f_ix_i'(\check\theta-\theta_0)]\\
\end{array}$$
We have that $|\En[  v_i \{f_id_i-v_i\} ]|=|\En[ f_i v_ix_i'\theta_{0\tau} ]|=o_P(\delta_n)$ since $$\barEp[ (v_i f_i x_i'\theta_{0\tau} )^2] \leq  \bar f^2\{\barEp[v_i^4]\barEp[(x_i'\theta_{0\tau})^4]\}^{1/2}\leq C$$ and  $\barEp[ f_i v_ix_i'\theta_{0\tau}]=0$. Moreover, $\En[f_i d_i f_i r_{\mtau i}]\leq \bar f_i^2\|d_i\|_{2,n}\|r_{\mtau i}\|_{2,n}=o_P(\delta_n)$, $|\En[f_i d_i f_ix_i'(\check\theta-\theta_\tau)]| \leq \|d_i\|_{2,n}\|f_ix_i'(\check\theta_\tau-\theta_\tau )\|_{2,n} = o_P(\delta_n)$ since $|\check T|\lesssim \widetilde s_{\mtau} + s$ with probability $1-o(1)$ and $\supp(\hat\theta_\tau)\subset \check T$.

~\\
{\it Part 2. Proof of the Double Selection.} The analysis of $\hat\theta_\tau$ and $\hat \beta_\tau$  are identical to the corresponding analysis for Part 1. Let $\hat T^*_\tau$ denote the set of variables used in the last step, namely $\hat T^*_\tau = \supp(\hat\beta_\tau^{\lambda_\tau})\cup \supp(\hat\theta_\tau)$ where $\hat\beta_\tau^{\lambda_\tau}$ denotes the thresholded estimator. Using the same arguments as in Part 1, we have with probability $1-o(1)$ $$|\hat T^*_\tau| \lesssim \widehat s^*_\tau = s + \frac{ns\log p}{h^2\lambda^2}+\left( \frac{nh^{\bar k}}{\lambda} \right)^2. $$

Next we establish preliminary rates for $\check\eta_\tau := (\check\alpha_\tau,\check\beta_\tau')'$ that solves \begin{equation}\label{Eq:DSopt} \check\eta_\tau \in \arg\min_{\eta} \En[\hat f_i \rho_\tau(y_i-(d_i,x_{i\hat T^*_\tau}')\eta)]\end{equation} where $\hat f_i = \hat f_i(d_i,x_i)\geq 0$  is a positive function of $(d_i,z_i)$. We will apply Lemma \ref{Thm:MainTwoStepGeneric} as the problem above is a (post-selection) refitted quantile regression for $(\tilde y_i,\tilde x_i)=(\hat f_i y_i, \hat f_i(d_i,x_i')')$. Indeed, conditional on $\{d_i,z_i\}_{i=1}^n$, quantile moment condition holds as $$\begin{array}{rl}
\Ep[ (\tau - 1\{ \hat f_i y_i \leq \hat f_i d_i\alpha_\tau + \hat f_i x_i'\beta_\tau + \hat f_i r_{\tau i}\})\hat f_ix_i] & = \Ep[ (\tau - 1\{ y_i \leq d_i\alpha_\tau +  x_i'\beta_\tau + r_{\tau i}\})\hat f_i x_i]\\
& = \barEp[ (\tau - F_{y_i\mid d_i,z_i}(d_i\alpha_\tau +  x_i'\beta_\tau + r_{\tau i}))\hat f_i x_i]\\
& = 0.\end{array}$$
Since $\max_{i\leq n}\hat f_i \wedge \hat f_i^{-1} \lesssim 1$ with probability $1-o(1)$, the required side conditions of Lemma \ref{Thm:MainTwoStepGeneric} hold as in Part 1. We can take $\bar r_\tau = C\sqrt{s\log(pn)/n}$ with probability $1-o(1)$. Finally, to provide the bound $\hat Q$,  let $\eta_\tau = (\alpha_\tau,\beta_\tau')'$ and $\hat\eta_\tau=(\hat\alpha_\tau,\hat\beta^{\lambda_\tau}_\tau \ ')'$ which are $Cs$-sparse vectors. By definition $\supp(\hat\beta_\tau) \subset \hat T^*_\tau$ so that
$$\En[\hat f_i\{\rho_\tau(y_i-(d_i,x_i')\check\eta_\tau) - \rho_\tau(y_i-(d_i,x_i')\eta_\tau)\}] \leq \En[\hat f_i\{\rho_\tau(y_i-(d_i,x_i')\hat\eta_\tau) - \rho_\tau(y_i-(d_i,x_i')\eta_\tau)\}] $$
By Lemma \ref{Lemma:UpperQ} we have with probability $1-o(1)$ $$  \En[\hat f_i\{\rho_\tau(y_i-(d_i,x_i')\hat\eta_\tau) - \rho_\tau(y_i-(d_i,x_i')\eta_\tau)\}] \leq \hat Q:= C\frac{s\log (p n)}{n}.$$
Thus, with probability $1-o(1)$,  Lemma \ref{Thm:MainTwoStepGeneric} implies
$$ \| \hat f_i (d_i,x_i')(\check\eta_\tau - \eta_\tau)\|_{2,n} \lesssim \sqrt{ \frac{(s + \hat s^*_\tau)\log(pn)}{n\semin{Cs+C\hat s^*_\tau}}} $$

Since $s \leq \hat s^*_\tau$ and $1/\semin{Cs+C\hat s^*_\tau}\leq C'$ by Condition D we have $ \|\check\eta_\tau-\eta_\tau\|\lesssim \sqrt{\frac{\widehat s^*_\tau\log p}{n}}$.


Next we construct an orthogonal score function based on the solution of the weighted quantile regression (\ref{Eq:DSopt}).
By the first order condition for $(\check\alpha_\tau,\check\beta_\tau)$ in (\ref{Eq:DSopt}) we have for $s_i \in \partial \rho_\tau( y_i-d_i\check\alpha_\tau -x_i'\check\beta_\tau)$ that
$$
  \En\left[ s_i \hat f_i\binom{ d_{i}}{x_{i \hat T^*_\tau}} \right]  = 0.
$$
By taking linear combination of the selected covariates via $(1,-\widetilde\theta_\tau)$ and defining $\hat v_i = \hat f_i(d_{i}-x_{i \hat T^*_\tau}'\widetilde\theta_\tau)$ we have $\psi_{\alpha,\hat h}(y_i,d_i,z_i) = (\tau - 1\{y_{i} \leq d_i  \alpha +  x_{i}'\check \beta_\tau\})\hat v_i$. Since $s_i = \tau - 1\{y_{i} \leq d_i \check \alpha_\tau +  x_{i}'\check \beta_\tau\}$ if $y_i\neq d_i \check \alpha_\tau +  x_{i}'\check \beta_\tau$,
$$
\begin{array}{rl}| \En[ \psi_{\check\alpha_\tau,\hat h}(y_i,d_i,z_i) ] | & \leq | \En[ s_i \hat v_i] | +  \En[1\{ y_{i} = d_i \check \alpha_\tau +  x_{i}'\check \beta_\tau \} |\hat v_i|]\\
& \leq  \En[1\{ y_{i} = d_i \check \alpha_\tau +  x_{i}'\check \beta_\tau \} |\hat v_i- v_{i}|] +  \En[1\{ y_{i} = d_i \check \alpha_\tau +  x_{i}'\check \beta_\tau \} |v_{i}|].\\
\end{array}$$
Since $\widehat s^*_\tau:= |\widehat T^*_\tau|\lesssim \widetilde s_{\mtau}$ with probability $1-o(1)$, and $y_i$ has no point mass, $ y_{i} = d_i \check \alpha_\tau +  x_{i}'\check \beta_\tau$ for at most $C\widetilde s_{\mtau}$ indices in $\{1,\ldots,n\}$. Therefore,  we have with probability $1-o(1)$
  \begin{equation}\label{eq:DSb1}\En[1\{ y_{i} = d_i \check \alpha_\tau +  x_{i}'\check \beta_\tau \} |v_{i}|] \leq n^{-1} C\widetilde s_{\mtau} \max_{i\leq n} |v_i| \lesssim_P  n^{-1} \widetilde s_{\mtau} K_q\delta_n^{-1/6}\lesssim \delta_n^{1/3} n^{-1/2}.\end{equation}
under $K_q^2 \widetilde s^2_{\mtau} \leq \delta_n n$. Moreover,
\begin{equation}\label{eq:DSb2}  \En[1\{ y_{i} = d_i \check \alpha_\tau +  x_{i}'\check \beta_\tau \} |\hat v_i- v_{i}|] \leq  \sqrt{(1+|\widehat T^*_\tau|)/n}\|\hat v_i- v_{i}\|_{2,n} \lesssim \delta_n n^{-1/2}\end{equation}with probability $1-o(1)$  under
$\sqrt{\widehat s^*_\tau}\|\hat v_i- v_{i}\|_{2,n} \leq \delta_n$ holding with probability $1-o(1)$.
Therefore, the orthogonal score function implicitly created by the double selection estimator $\check\alpha_\tau$ approximately minimizes
$$
\widetilde L_n(\alpha) =  \frac{| \En[ (\tau - 1\{y_i \leq  d_i \alpha + x_i'\check\beta_\tau\})\hat v_i ] |^2}{\En[\{ (\tau - 1\{y_i \leq  d_i \alpha + x_i'\check\beta_\tau\})^2\hat v_i^2 ]},
 $$
and we have $\widetilde L_n(\check\alpha_\tau) \lesssim \delta_n^{1/3} n^{-1}$ with probability $1-o(1)$ by (\ref{eq:DSb1}) and (\ref{eq:DSb2}). Thus the conditions of Lemma \ref{Thm:EstIVQR} hold and the double selection estimator has the stated linear representation.
%

{\bf $\square$}

\section{Auxiliary Inequalities}

In this section we collect auxiliary inequalities that we use in our analysis.

\begin{lemma}\label{Lemma:Bound2nNormSecond}
Consider $\hat\eta_u$ and $\eta_u$ where $\|\eta_u\|_0\leq s$.
Denote by $\hat \eta^{\mu}_u$ the vector obtained by thresholding $\hat\eta_u$ as follows $\hat\eta^\mu_{uj}=\hat \eta_{uj}1\{|\hat \eta_{uj}|\geq \mu / \En[\tilde x_{ij}^2]^{1/2}\}$. We have that
$$\begin{array}{rl}
\|\hat \eta^\mu_u - \eta_u\|_{1} & \leq \|\hat \eta_u - \eta_u \|_{1}+s\mu /\min_{j\leq p}\En[\tilde x_{ij}^2]^{1/2}  \\
|\supp(\hat\eta^\mu)| & \leq s + \|\hat \eta_u - \eta_u \|_{1}\max_{j\leq p}\En[\tilde x_{ij}^2]^{1/2}/\mu\\
\|\tilde x_i'(\hat \eta_u^\mu-\eta_u)\|_{2,n} & \leq  \|\tilde x_i'(\hat \eta_u-\eta_u)\|_{2,n}  + \sqrt{\semax{s}} \left\{ \frac{2\sqrt{s}\mu}{\min_{j\leq p}\En[\tilde x_{ij}^2]^{1/2}} + \frac{\|\hat\eta_u-\eta_u\|_{1}}{\sqrt{s}}\right\}\end{array}$$
where $\semax{m}= \sup_{1\leq \|\theta\|_0\leq m}\|\tilde x_i'\theta\|_{2,n}^2/\|\theta\|^2$. \end{lemma}
{\bf Proof.}{\bf \ (Proof of Lemma \ref{Lemma:Bound2nNormSecond})}
Let $T_u = \supp(\eta_u)$. The first relation follows from the triangle inequality
$$\begin{array}{rl}
 \|\hat \eta^\lambda_u - \eta_u\|_{1} &= \|(\hat \eta^\lambda_u - \eta_u)_{T_u}\|_{1} + \|(\hat \eta^\lambda_u)_{T_u^c}\|_{1} \\
&\leq  \|(\hat \eta^\lambda_u - \hat\eta_u)_{T_u}\|_{1} + \|(\hat \eta_u - \eta_u)_{T_u}\|_{1}+ \|(\hat \eta^\lambda_u)_{T_u^c}\|_{1} \\
& \leq \mu s /\min_{j\leq p}\En[\tilde x_{ij}^2]^{1/2} + \|(\hat \eta_u - \eta_u)_{T_u}\|_{1}+ \|(\hat \eta_u)_{T_u^c}\|_{1}\\
& = \mu s /\min_{j\leq p}\En[\tilde x_{ij}^2]^{1/2} + \|\hat \eta_u - \eta_u\|_{1} \end{array}$$

To show the second result note that  $$\|\hat\eta_u-\eta_u\|_{1} \geq \{|\supp(\hat\eta_u^\mu)|-s\}\mu/\max_{j\leq p}\En[\tilde x_{ij}^2]^{1/2}.$$ Therefore, we have
$|\supp(\hat\eta_u^\lambda)|-s \leq \|\hat\eta_u-\eta_u\|_{1}\max_{j\leq p}\En[\tilde x_{ij}^2]^{1/2}/\mu$ which yields the result.

To show the third bound, we start using the triangle inequality
$$ \|\tilde x_i'(\hat\eta^\mu_u-\eta_u)\|_{2,n} \leq \|\tilde x_i'(\hat\eta^\mu_u-\hat\eta_u)\|_{2,n} + \|\tilde x_i'(\hat\eta_u-\eta_u)\|_{2,n}.$$
Without loss of generality assume that order the components is so that $|(\hat\eta^\mu_u-\hat\eta_u)_j|$ is decreasing. Let $T_1$ be the set of $s$ indices corresponding to the largest values of $|(\hat\eta^\mu_u-\hat\eta_u)_j|$. Similarly define $T_k$ as the set of $s$ indices corresponding to the largest values of $|(\hat\eta^\mu_u-\hat\eta_u)_j|$ outside $\cup_{m=1}^{k-1}T_m$. Therefore, $\hat\eta^\mu_u-\hat\eta_u=\sum_{k=1}^{\ceil{p/s}} (\hat\eta^\mu_u-\hat\eta_u)_{T_k}$. Moreover,
given the monotonicity of the components, $\|(\hat\eta^\mu_u-\hat\eta_u)_{T_k}\|_{2,\varpi} \leq \|(\hat\eta^\mu_u-\hat\eta_u)_{T_{k-1}}\|_1/\sqrt{s}$. Then, we have
$$ \begin{array}{rl}
 \|\tilde x_i'(\hat\eta^\mu_u-\hat\eta_u)\|_{2,n} & = \|\tilde x_i'\sum_{k=1}^{\ceil{p/s}} (\hat\eta^\mu_u-\hat\eta_u)_{T_k}\|_{2,n}\\
& \leq \|\tilde x_i'(\hat\eta^\mu_u-\hat\eta_u)_{T_1}\|_{2,n} + \sum_{k\geq 2}\|\tilde x_i'(\hat\eta^\mu_u-\hat\eta_u)_{T_k}\|_{2,n}\\
 & \leq \sqrt{\semax{s}}\|(\hat\eta^\mu_u-\hat\eta_u)_{T_1}\|+\sqrt{\semax{s}}\sum_{k\geq 2}\|(\hat\eta^\mu_u-\hat\eta_u)_{T_k}\|\\
&  \leq  \sqrt{\semax{s}}\mu\sqrt{s}/\min_{j\leq p}\En[\tilde x_{ij}^2]^{1/2} + \sqrt{\semax{s}}\sum_{k\geq 1}\|(\hat\eta^\mu_u-\hat\eta_u)_{T_k}\|_{1}/\sqrt{s}\\
& =  \sqrt{\semax{s}}\mu\sqrt{s}/\min_{j\leq p}\En[\tilde x_{ij}^2]^{1/2} + \sqrt{\semax{s}} \|\hat\eta^\mu_u-\hat\eta_u\|_{1}/\sqrt{s} \\
&  \leq  \sqrt{\semax{s}} \{ 2\sqrt{s}\mu/\min_{j\leq p}\En[\tilde x_{ij}^2]^{1/2} + 2\|\hat\eta_u-\eta_u\|_{1}/\sqrt{s}\} \\ \end{array}$$

where the last inequality follows from the first result and the triangle inequality.

{\bf $\square$}

The following result follows from  Theorem 7.4 of \cite{delapena} and the union bound.
\begin{lemma}[Moderate Deviation Inequality for Maximum of a Vector]\label{Lemma:SNMD} Suppose that
$$ \mathcal{S}_{j} =  \frac{\sum_{i=1}^n U_{ij}}{\sqrt{ \sum_{i=1}^n U^2_{ij}}},$$
where $U_{ij}$ are independent variables across $i$ with mean zero.  We have that
$$
\Pr \left( \max_{1 \leq j\leq p }|\mathcal{S}_{j}|  >  \Phi^{-1}(1- \gamma/2p)  \right) \leq \gamma \(1 + \frac{A}{\ell^3_n}\),
$$
where $A$ is an absolute constant, provided that for $\ell_n>0$
$$
0 \leq \Phi^{-1}(1- \gamma/(2p))  \leq \frac{n^{1/6}}{\ell_n} \min_{1\leq j \leq p} M[U_j]-1, \ \  \ M[U_j] := \frac{\left( \frac{1}{n} \sum_{i=1}^n E U_{ij}^2\right)^{1/2}}{\left(\frac{1}{n} \sum_{i=1}^n E|U_{ij}^3| \right)^{1/3}}.
$$
\end{lemma}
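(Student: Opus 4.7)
The plan is to combine a Cramer-type moderate deviation theorem for self-normalized sums with a crude union bound; the whole argument is two lines of probability once the right external ingredient is quoted.

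First I would invoke the Jing--Shao--Wang (2003) moderate deviation inequality for self-normalized sums. For each fixed $j$, applied to the independent mean-zero sequence $(U_{ij})_{i=1}^n$, this theorem delivers
$$ \left| \frac{\Pr(\mathcal{S}_j \geq x)}{1-\Phi(x)} - 1 \right| \leq \frac{A_0 (1+x)^3}{(n^{1/6} M[U_j])^3} $$
uniformly over $0 \leq x \leq n^{1/6} M[U_j]$, where $A_0$ is an absolute constant. The same bound applied to the flipped sequence $(-U_{ij})_{i=1}^n$ controls the left tail $\Pr(\mathcal{S}_j \leq -x)$ in exactly the same way, since $-\mathcal{S}_j$ is the self-normalized sum of $(-U_{ij})$ and $M[-U_j]=M[U_j]$.

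Next I would instantiate this at $x := \Phi^{-1}(1-\gamma/(2p))$. The hypothesis $x \leq (n^{1/6}/\ell_n)\min_j M[U_j] - 1$ guarantees that $1+x \leq (n^{1/6}/\ell_n)M[U_j]$ for every $j$, so $x$ lies in the admissible range of the moderate deviation bound and
$$ \frac{(1+x)^3}{(n^{1/6}M[U_j])^3} \leq \frac{1}{\ell_n^3} $$
uniformly in $j$. Consequently $\Pr(|\mathcal{S}_j| > x) \leq 2(1-\Phi(x))(1+A_0/\ell_n^3)$ for each $j$.

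Finally I would close with the union bound: since $1-\Phi(x) = \gamma/(2p)$ by choice of $x$,
$$ \Pr\left(\max_{1 \leq j \leq p} |\mathcal{S}_j| > x\right) \leq \sum_{j=1}^p \Pr(|\mathcal{S}_j| > x) \leq 2p \cdot \frac{\gamma}{2p}\left(1 + \frac{A_0}{\ell_n^3}\right) = \gamma\left(1 + \frac{A}{\ell_n^3}\right) $$
with $A = A_0$. The only real substance is the Jing--Shao--Wang bound with its explicit $(1+x)^3$ relative-error term; the rest is bookkeeping (range check, flipping signs for the two-sided tail, and the factor-of-$2p$ union bound). There is no sticking point beyond citing the correct form of the self-normalized moderate deviation theorem.
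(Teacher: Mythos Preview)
Your argument is correct and is exactly the intended derivation: the paper states this lemma without proof, treating it as a direct consequence of the self-normalized moderate deviation theorem of Jing, Shao and Wang (2003) combined with a union bound over $j=1,\ldots,p$. Your three steps --- quoting the Cram\'er-type relative-error bound for each $\mathcal{S}_j$, checking that the hypothesis $\Phi^{-1}(1-\gamma/(2p)) \leq (n^{1/6}/\ell_n)\min_j M[U_j]-1$ places $x$ in the admissible range and forces $(1+x)^3/(n^{1/6}M[U_j])^3 \leq \ell_n^{-3}$, and then summing the two-sided tail bounds over $j$ --- reproduce the claimed inequality with $A=A_0$.
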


\begin{lemma}\label{thm:RV34}
Let $X_i$, $i=1,\ldots,n$, be independent random vectors in $\RR^p$. Let $$\bar\delta_n:= 2\left( \bar C K \sqrt{k} \log(1+k) \sqrt{\log (p\vee n)} \sqrt{\log n}  \right)/\sqrt{n},$$ where $K\geq \{\Ep[ \max_{1\leq i\leq n}\|X_i\|_\infty^2]\}^{1/2}$ and $\bar C$ is a universal constant. Then we have
$$ \Ep\left[ \sup_{\|\alpha\|_0\leq k, \|\alpha\| =1} \left| \En\[ (\alpha'X_i)^2 - \Ep[(\alpha'X_i)^2] \]\right|\right] \leq \bar\delta_n^2 + \bar\delta_n \sup_{\|\alpha\|_0\leq k, \|\alpha\| =1} \sqrt{\barEp[(\alpha'X_i)^2]}. $$
\end{lemma}
{\bf Proof.}
It follows from Theorem 3.6 of \cite{RudelsonVershynin2008}, see \cite{BelloniChernozhukovHansen2011} for details.
{\bf $\square$}

We will also use the following result of \cite{chernozhukov2012gaussian}.
\begin{lemma}[Maximal Inequality]
\label{lemma:CCK}  Work with the setup above.  Suppose that $F\geq \sup_{f \in \mathcal{F}}|f|$ is a measurable envelope for $\mathcal{F}$
with $\| F\|_{P,q} < \infty$ for some $q \geq 2$.  Let $M = \max_{i\leq n} F(W_i)$ and $\sigma^{2} > 0$ be any positive constant such that $\sup_{f \in \mathcal{F}}  \| f \|_{P,2}^{2} \leq \sigma^{2} \leq \| F \|_{P,2}^{2}$. Suppose that there exist constants $a \geq e$ and $v \geq 1$ such that
\begin{equation*}
\log \sup_{Q} N(\epsilon \| F \|_{Q,2}, \mathcal{F},  \| \cdot \|_{Q,2}) \leq  v \log (a/\epsilon), \ 0 <  \epsilon \leq 1.
\end{equation*}
Then
\begin{equation*}
\Ep_P [ \sup_{f\in \mathcal{F}} | \Gn(f)| ] \leq K  \left( \sqrt{v\sigma^{2} \log \left ( \frac{a \| F \|_{P,2}}{\sigma} \right ) } + \frac{v\| M \|_{P, 2}}{\sqrt{n}} \log \left ( \frac{a \| F \|_{P,2}}{\sigma} \right ) \right),
\end{equation*}
where $K$ is an absolute constant.  Moreover, for every $t \geq 1$, with probability $> 1-t^{-q/2}$,
\begin{multline*}
\sup_{f\in \mathcal{F}} | \Gn(f)| \leq (1+\alpha) \Ep_P [ \sup_{f\in \mathcal{F}} | \Gn(f)| ] + K(q) \Big [ (\sigma + n^{-1/2} \| M \|_{P,q}) \sqrt{t}
+  \alpha^{-1}  n^{-1/2} \| M \|_{P,2}t \Big ], \
\end{multline*}
$\forall \alpha > 0$ where $K(q) > 0$ is a constant depending only on $q$.  In particular, setting $a \geq n$ and $t = \log n$,
with probability $> 1- c(\log n)^{-1}$,
\begin{equation} \label{simple bound}
\sup_{f\in \mathcal{F}} | \Gn(f)| \leq K(q,c) \left ( \sigma \sqrt{v \log \left ( \frac{a \| F \|_{P,2}}{\sigma} \right ) } + \frac{v
 \| M \|_{P,q} } {\sqrt{n}}\log \left ( \frac{a \| F \|_{P,2}}{\sigma} \right ) \right),
\end{equation}
where $  \| M \|_{P,q}  \leq n^{1/q} \| F\|_{P,q}$ and  $K(q,c) > 0$ is a constant depending only on $q$ and $c$.

\end{lemma}

\section{Proofs for Section \ref{Sec:Step1} of Supplementary Material}

{\bf Proof.}{\bf \ (Proof of Lemma \ref{Theorem:L1QRnp})}
Let $\delta = \hat\eta_u - \eta_u$  and define $$\hat R(\eta) = \En[\rho_u(\tilde y_i-\tilde x_i'\eta)]-\En[\rho_u(\tilde y_i-\tilde x_i'\eta_u-r_{ui})]-\En[(u-1\{\tilde y_i\leq \tilde x_i'\eta_u+r_{ui}\})(\tilde x_i'\eta-\tilde x_i'\eta_u-r_{ui})].$$ By Lemma \ref{Lemma:ControlR}, $\hat R(\eta) \geq 0$, $\barEp[\hat R(\eta_u)] \leq \bar f \|r_{ui}\|_{2,n}^2/2$ and with probability at least $1-\gamma$, $\hat R(\eta_u) \leq \bar R_\gamma:= 4\max\{\bar f \|r_{ui}\|_{2,n}^2, \ \|r_{ui}\|_{2,n}\sqrt{\log(8/\gamma)/n}\} \leq 4C s \log(p/\gamma) / n$ from Condition PQR. By definition of $\hat \eta_u$ we have
 \begin{equation}\label{ImpDefbeta}\begin{array}{rl}
 \hat R(\hat\eta_u)-\hat R(\eta_u)+\En[(u-1\{\tilde y_i\leq \tilde x_i'\eta_u+r_{ui}\})\tilde x_i']\delta & = \En[\rho_u(\tilde y_i-\tilde x_i'\hat\eta_u)] - \En[\rho_u(\tilde y_i-\tilde x_i'\eta_u)] \\
  & \leq \frac{\lambda_u}{n}\|\eta_u\|_1-\frac{\lambda_u}{n}\|\hat\eta_u\|_1.\end{array}\end{equation}

Let $N=\sqrt{8\cc\bar R_\gamma/\underf} + \frac{10}{\underf}\left\{\bar f\|r_{ui}\|_{2,n}+\frac{3\cc\lambda_u\sqrt{s}}{n\kappa_{2\cc}}+ \frac{8(1+2\cc)\sqrt{s\log(16p/\gamma)}}{\sqrt{n}\kappa_{2\cc}}+\frac{8\cc \sqrt{n} \bar R_\gamma \sqrt{\log(16p/\gamma)}}{\lambda_u\{s\log(p/\gamma)/n\}^{1/2}}\right\}$ denote the upper bound in the rate of convergence. Note that $N \geq \{s\log(p/\gamma)/n\}^{1/2}$. Suppose that the result is violated, so that $\|\tilde x_i'\delta\|_{2,n} > N$. Then by convexity of the objective function in (\ref{def:l1qr}), there is also a vector $\tilde \delta$ such that $\|\tilde x_i'\tilde \delta\|_{2,n} = N$, and
\begin{equation}\label{ImpDefbetaTilde0}
\begin{array}{rl}
 \En[\rho_u(\tilde y_i-\tilde x_i'(\tilde \delta +\eta_u))] - \En[\rho_u(\tilde y_i-\tilde x_i'\eta_u)]  & \leq \frac{\lambda_u}{n}\|\eta_u\|_1-\frac{\lambda_u}{n}\|\tilde \delta +\eta_u\|_1.\end{array}\end{equation}
Next we will show that with high probability such $\tilde \delta$ cannot exist implying that $\|\tilde x_i'\delta\|_{2,n}\leq  N$.

By the choice of $\lambda_u \geq c\Lambda_u(1-\gamma\mid \tilde x)$ the event $\Omega_1:=\{ \frac{\lambda_u}{n}\geq c\|\En[(u-1\{\tilde y_i\leq \tilde x_i'\eta_u+r_{ui}\}) \tilde x_i]\|_\infty\}$ occurs with probability at least $1-\gamma$. The event $\Omega_2 :=\{\hat R_1(\eta_u) \leq \bar R_\gamma\}$ also holds with probability at least $1-\gamma$. Under $\Omega_1 \cap \Omega_2$, and since $\hat R(\eta)\geq 0$, we have
\begin{equation}\label{ImpDefbetaTilde}
\begin{array}{rl}
-\hat R(\eta_u)-\frac{\lambda_u}{cn}\|\tilde\delta\|_1 & \leq \hat R(\eta_u+\tilde \delta)-\hat R(\eta_u)+\En[(u-1\{\tilde y_i\leq \tilde x_i'\eta_u+r_{ui}\})\tilde x_i']\tilde \delta \\
  & = \En[\rho_u(\tilde y_i-\tilde x_i'(\tilde \delta +\eta_u))] - \En[\rho_u(\tilde y_i-\tilde x_i'\eta_u)] \\
 & \leq \frac{\lambda_u}{n}\|\eta_u\|_1-\frac{\lambda_u}{n}\|\tilde \delta +\eta_u\|_1\end{array}\end{equation}
so that for $\cc = (c+1)/(c-1)$
$$ \|\tilde \delta_{T^c_u}\|_1 \leq \cc \|\tilde \delta_{T_u}\|_1  + \frac{nc}{\lambda_u(c-1)}\hat R(\eta_u).$$
To establish that $\tilde \delta \in A_u:=\Delta_{2\cc}\cup \{ v  : \|\tilde x_i'v\|_{2,n}=N, \|v\|_1 \leq 2\cc n \bar R_\gamma/\lambda_u\}$  we consider two cases. If $\|\tilde \delta_{T^c_u}\|_1 \geq 2\cc\|\tilde \delta_{T_u}\|_1$ we have
$$ \frac{1}{2}\|\tilde \delta_{T^c_u}\|_1 \leq  \frac{nc}{\lambda_u(c-1)}\hat R(\eta_u)$$
and consequentially $$ \|\tilde\delta\|_1\leq \{1+1/(2c)\}\|\tilde \delta_{T^c_u}\|_1 \leq  \frac{2n\cc}{\lambda_u
}\hat R(\eta_u).$$
Otherwise $\|\tilde \delta_{T^c_u}\|_1 \leq 2\cc\|\tilde \delta_{T_u}\|_1$, and we have
$$\|\tilde\delta\|_1 \leq (1+2\cc)\|\tilde\delta_{T_u}\|_1 \leq (1+2\cc)\sqrt{s}\|\tilde x_i'\tilde\delta\|_{2,n}/\kappa_{2\cc}.$$
Thus with probability $1-2\gamma$,  $\tilde \delta \in A_u$.

Therefore, under $\Omega_1 \cap \Omega_2$, from (\ref{ImpDefbetaTilde0}), applying  Lemma \ref{Lemma:EmpProc:Normalized} (part (1) and (3) to cover $\tilde \delta \in A_u$), for $ \|\tilde x_i'\tilde \delta\|_{2,n} = N$ with probability at least $1-4\gamma$ we have
{\small $$\begin{array}{rl}
& \barEp[\rho_u(\tilde y_i-\tilde x_i'(\tilde \delta + \eta_u))] - \barEp[\rho_u(\tilde y_i-\tilde x_i'\eta_u)]\\
 &  \leq  \frac{\lambda_u}{n}\|\tilde \delta\|_1 + \frac{\|\tilde x_i'\tilde \delta\|_{2,n}}{\sqrt{n}}\left\{ \frac{8(1+2\cc)\sqrt{s}}{\kappa_{2\cc}}+\frac{8\cc n \bar R_\gamma}{\lambda_u\underline{N}}\right\}\sqrt{\log(16p/\gamma)}\\
 & \leq  2\cc\bar R_\gamma +\|\tilde x_i'\tilde \delta\|_{2,n}\left[\frac{3\cc\lambda_u\sqrt{s}}{n\kappa_{2\cc}}+ \left\{ \frac{8(1+2\cc)\sqrt{s}}{\kappa_{2\cc}}+\frac{8\cc n \bar R_\gamma}{\lambda_u N}\right\} \frac{\sqrt{\log(16p/\gamma)}}{\sqrt{n}}\right]\\
 \end{array}$$}
where we used the bound for $\|\tilde\delta\|_1 \leq (1+2\cc)\sqrt{s}\|\tilde x_i'\tilde\delta\|_{2,n}/\kappa_{2\cc} + \frac{2n\cc}{\lambda_u}\bar R_\gamma$.

Using Lemma \ref{Lemma:IdentificationSparseQRNP}, since by assumption $ \sup_{\bar\delta\in A_u} \frac{\En[|r_{ui}||\tilde x_i'\bar\delta|^2]}{\En[|\tilde x_i'\bar \delta|^2]} \to 0$,  we have
$$ \barEp[\rho_u(\tilde y_i-\tilde x_i'(\eta_u+\tilde \delta))-\rho_u(\tilde y_i-\tilde x_i'\eta_u)]\geq - \bar f \|r_{ui}\|_{2,n} \|\tilde x_i'\tilde \delta\|_{2,n}  + \frac{\underf\|\tilde x_i'\tilde\delta\|_{2,n}^2}{4}\wedge \bar q_{A_u}\underf \|\tilde x_i'\tilde\delta\|_{2,n}$$

Note that $N < 4 \bar q_{A_u}$ for $n$ sufficiently large by the assumed side condition,
so that the minimum on the right hand side is achieved for the quadratic part. Therefore we have
{\small $$ \frac{\underf\|\tilde x_i'\tilde \delta\|_{2,n}^2}{4} \leq  2\cc\bar R_\gamma + \|\tilde x_i'\tilde \delta\|_{2,n}\left\{\bar f\|r_{ui}\|_{2,n}+\frac{3\cc\lambda_u\sqrt{s}}{n\kappa_{2\cc}}+ \frac{8(1+2\cc)\sqrt{s\log(16p/\gamma)}}{\sqrt{n}\kappa_{2\cc}}+\frac{8\cc \sqrt{n} \bar R_\gamma \sqrt{\log(16p/\gamma)}}{\lambda_u N}\right\}$$}
which implies that
{\small $$\begin{array}{rl}
\|\tilde x_i'\tilde \delta\|_{2,n} & \leq \sqrt{8\cc\bar R_\gamma/\underf} \\
& + \frac{8}{\underf}\left\{\bar f\|r_{ui}\|_{2,n}+\frac{3\cc\lambda_u\sqrt{s}}{n\kappa_{2\cc}}+ \frac{8(1+2\cc)\sqrt{s\log(16p/\gamma)}}{\sqrt{n}\kappa_{2\cc}}+\frac{8\cc \sqrt{n} \bar R_\gamma \sqrt{\log(16p/\gamma)}}{\lambda_u N}\right\}\end{array}$$}
which violates the assumed condition that $\|\tilde x_i'\tilde \delta\|_{2,n} = N$ since $N>\{s\log(p/\gamma)/n\}^{1/2}$.
{\bf $\square$}

{\bf Proof.}{\bf \ (Proof of Lemma \ref{Thm:MainTwoStepGeneric})}
Let $ \ \hat \delta_u = \hat \eta_u - \eta_u$. By optimality of $\widetilde
\eta_u$ in (\ref{def:l1qr}) we have with probability $1-\gamma$
\begin{equation}\label{2step:Rel1a} \begin{array}{rl}
  \En[\rho_u( \tilde y_i-\tilde x_i'\widetilde \eta_u )] - \En[\rho_u(\tilde y_i-\tilde x_i'\eta_u )] & \leq  \En[\rho_u(\tilde y_i-\tilde x_i'\hat \eta_u )] - \En[\rho_u(\tilde y_i-\tilde x_i'\eta_u )] \leq  \hat Q.\end{array}\end{equation}

Let $N= 2\bar{f}\bar r_u +  A_{\varepsilon,n} +
2\hat Q^{1/2}$ denote the upper bound in the rate of convergence where $A_{\varepsilon,n}$ is defined below. Suppose that the result is violated, so that $\|\tilde x_i'(\widetilde\eta_u - \eta_u)\|_{2,n} > N$. Then by convexity of the objective function in (\ref{def:l1qr}), there is also a vector $\widetilde \delta_u$ such that $\|\tilde x_i'\widetilde \delta_u\|_{2,n} =  N$, $\|\widetilde \delta_u\|_0 = \|\widetilde\eta_u - \eta_u\|_0\leq \hat s_u+s$ and
 \begin{equation}\label{Additoinal}
 \En[\rho_u( \tilde y_i-\tilde x_i'(\eta_u+\widetilde \delta_u) )] - \En[\rho_u(\tilde y_i-\tilde x_i'\eta_u )] \leq  \hat Q.\end{equation} Next we will show that with high probability such $\widetilde \delta_u$ cannot exist implying that $\|\tilde x_i'(\widetilde\eta_u - \eta_u)\|_{2,n}\leq  N$ with high probability.

By Lemma \ref{Lemma:EmpProc:Normalized}, with probability at least $1-\varepsilon$,
we have
\begin{equation}\label{2stepRel3a}
\frac{|(\En-\barEp)[\rho_u(\tilde y_i-\tilde x_i'(\eta_u+\widetilde \delta_u) ) - \rho_u(\tilde y_i-\tilde x_i'\eta_u )]|}{ \|\tilde x_i'\widetilde
\delta_u\|_{2,n}} \leq 8 \sqrt{\frac{{(\widehat s_u + s) \log
(16p/\varepsilon)}}{n\semin{\hat s_u+s}}}=:A_{\varepsilon,n}.\end{equation}
Thus combining relations
(\ref{2step:Rel1a}) and (\ref{2stepRel3a}),  we have  $$ \begin{array}{rcl} \displaystyle
\barEp[\rho_u(\tilde y_i-\tilde x_i'(\eta_u+\widetilde \delta_u) )] - \barEp[\rho_u(\tilde y_i-\tilde x_i'\eta_u )] &\leq &
\displaystyle  \|\tilde x_i'\widetilde \delta_u\|_{2,n} A_{\varepsilon,n} +  \hat Q\\
 \end{array}
 $$ with probability at least $1-\varepsilon$.
Invoking the sparse identifiability relation  of Lemma \ref{Lemma:IdentificationSparseQRNP}, with the same probability,  since $\sup_{\|\delta\|_0\leq \hat s_u + s}  \frac{\En[|r_{ui}|\ |\tilde x_i'\theta|^2]}{\En[|\tilde x_i'\theta|^2]}\to 0$ by assumption,  $$ \begin{array}{rcl} \displaystyle
(\underline{f}\|\tilde x_i'\widetilde \delta_u\|_{2,n}^2/4) \wedge \left( \widetilde q_{\hat s_u}\underline{f}\|\tilde x_i'\widetilde \delta_u\|_{2,n} \right) & \leq & \displaystyle \|\tilde x_i'\widetilde \delta_u\|_{2,n}\left\{\bar{f}\|r_{ui}\|_{2,n} +  A_{\varepsilon,n}\right\} +  \hat Q.\\
 \end{array}$$ where $\widetilde q_{\hat s_u} :=  \mbox{$\frac{\underf^{3/2}}{2\bar f'}$} \inf_{\|\delta\|_0\leq \hat s_u + s}  \frac{\|\tilde x_i'\theta\|_{2,n}^3}{\En[|\tilde x_i'\theta|^3]}$.

Under the assumed growth condition, we have $N<4\widetilde q_{\hat s_u}$ for $n$ sufficiently large and the minimum is achieved in the quadratic part. Therefore, for $n$ sufficiently large, we have
$$\|\tilde x_i'\widetilde \delta_u\|_{2,n} \leq \bar{f}\|r_{ui}\|_{2,n} +  A_{\varepsilon,n} + 2\hat Q^{1/2} < N$$

Thus with probability at least $1-\varepsilon-\gamma-o(1)$ we have  $\|\tilde x_i'\widetilde \delta_u\|_{2,n}<  N$ which contradicts its definition. Therefore, $\|\tilde x_i'(\widetilde \eta_u -\eta_u)\|_{2,n}\leq N$ with probability at least $1-\gamma-\varepsilon-o(1)$. 
{\bf $\square$}

\subsection{Technical Lemmas for Quantile Regression}\label{Sec:ProofsTechnicalLemmaL1QR}

\begin{lemma}\label{Lemma:IdentificationSparseQRNP}
For a subset $A\subset \RR^p$ let
$$ \bar q_A = (1/2) \cdot (\underf^{3/2}/\bar{f'}) \cdot \inf_{ \delta \in A} \En\[|\tilde x_i'\delta|^2\]^{3/2}/\En\[|\tilde x_i'\delta|^3\]$$
and assume that for all $\delta \in A$
$$\barEp\[ |r_{ui}|\cdot |\tilde x_i'\delta|^2\]  \leq (\underf/[4\bar f'])\barEp[|\tilde x_i'\delta|^2].$$
Then, we have
$$ \barEp[\rho_u(\tilde y_i-\tilde x_i'(\eta_u + \delta))] - \barEp[\rho_u(\tilde y_i-\tilde x_i'\eta_u)] \geq  \frac{\underf\|\tilde x_i'\delta\|_{2,n}^2}{4} \wedge \left\{ \bar q_{A}\underf\|\tilde x_i'\delta\|_{2,n}\right\} -  \bar{f} \|r_{ui}\|_{2,n}\|\tilde x_i'\delta\|_{2,n}.$$
\end{lemma}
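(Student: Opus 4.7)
The plan is to apply Knight's identity, perform a second-order expansion of the conditional CDF, and then use the convexity of $\rho_u$ to extend the resulting small-ball bound to all $\delta \in A$. First, setting $\zeta_i = \tilde x_i'\delta$, $F_i(\cdot) = F_{\epsilon_i\mid\tilde x_i}(\cdot\mid\tilde x_i)$, and writing $\tilde y_i - \tilde x_i'\eta_u = \epsilon_i + r_{ui}$, Knight's identity yields
\begin{equation*}
\rho_u(\epsilon_i + r_{ui} - \zeta_i) - \rho_u(\epsilon_i + r_{ui}) = -\zeta_i(u - 1\{\epsilon_i + r_{ui} \leq 0\}) + \int_0^{\zeta_i}\big(1\{\epsilon_i \leq t-r_{ui}\} - 1\{\epsilon_i \leq -r_{ui}\}\big)\,dt.
\end{equation*}
Taking $\Ep[\cdot\mid\tilde x_i, r_{ui}]$ and using the quantile condition $F_i(0) = u$, the conditional expectation
$G_i(\zeta) := \zeta[F_i(-r_{ui}) - F_i(0)] + \int_0^{\zeta}[F_i(t-r_{ui}) - F_i(-r_{ui})]\,dt$
is a $C^2$ function with $G_i'(0) = F_i(-r_{ui}) - F_i(0)$ and $G_i''(\zeta) = f_i(\zeta - r_{ui})$.

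The density bounds $f_i(0) \geq \underf$, $f_i \leq \bar f$, and $|f_i'| \leq \bar f'$ give $|G_i'(0)| \leq \bar f|r_{ui}|$ and $G_i''(t) \geq \underf - \bar f'(|t| + |r_{ui}|)$. Taylor's formula with integral remainder then produces the pointwise bound
\begin{equation*}
G_i(\zeta_i) \geq -\bar f|r_{ui}||\zeta_i| + \tfrac{\underf}{2}\zeta_i^2 - \tfrac{\bar f'}{6}|\zeta_i|^3 - \tfrac{\bar f'}{2}|r_{ui}|\zeta_i^2.
\end{equation*}
Averaging over $i$, applying Cauchy--Schwarz to the linear term, and using the hypothesis $\barEp[|r_{ui}||\tilde x_i'\delta|^2] \leq (\underf/[4\bar f'])\,\barEp[|\tilde x_i'\delta|^2]$ to absorb the $|r_{ui}|\zeta_i^2$ cross term into the quadratic, I obtain (with $B := \|\tilde x_i'\delta\|_{2,n}$)
\begin{equation*}
\barEp[\rho_u(\tilde y_i - \tilde x_i'(\eta_u+\delta)) - \rho_u(\tilde y_i - \tilde x_i'\eta_u)] \geq -\bar f\|r_{ui}\|_{2,n}B + \tfrac{3\underf}{8}B^2 - \tfrac{\bar f'}{6}\En[|\tilde x_i'\delta|^3].
\end{equation*}
The definition of $\bar q_A$ bounds the cubic term by $\{\underf^{3/2}/(12\bar q_A)\}B^3$ for $\delta \in A$.

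This quadratic-minus-cubic lower bound delivers the desired $\underf B^2/4$ estimate only when $B \lesssim \bar q_A/\underf^{1/2}$. To handle the remaining large-norm directions, I use convexity of $\rho_u$: for $\alpha \in (0,1]$,
\begin{equation*}
\rho_u(\tilde y_i - \tilde x_i'(\eta_u + \alpha\delta)) - \rho_u(\tilde y_i - \tilde x_i'\eta_u) \leq \alpha\,[\rho_u(\tilde y_i - \tilde x_i'(\eta_u + \delta)) - \rho_u(\tilde y_i - \tilde x_i'\eta_u)].
\end{equation*}
Applying the previous display with $\alpha\delta$ in place of $\delta$ is legitimate because both the $r_{ui}$ hypothesis and the ratio defining $\bar q_A$ are scale-invariant in $\delta$; dividing by $\alpha$ then yields a family of lower bounds parameterized by $\alpha$. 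Taking $\alpha = 1$ for small $B$ produces the quadratic regime $\underf B^2/4$, while taking $\alpha \asymp \bar q_A/(\underf^{1/2}B)$ for large $B$ produces the linear regime $\underf\bar q_A B$. Combining the two regimes gives the claimed $\wedge$-form modulo the $-\bar f\|r_{ui}\|_{2,n}B$ term. The main technical delicacy lies in tuning the numerical constants in this last step so that the two regimes glue cleanly as $\underf B^2/4 \wedge \underf\bar q_A B$; the downstream applications only invoke the lemma qualitatively, so modest constant-level losses are harmless.
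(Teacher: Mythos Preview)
Your proposal is correct and follows essentially the same route as the paper's proof: Knight's identity, a second-order Taylor expansion of the conditional CDF giving a quadratic lower bound with a cubic error and a cross term in $|r_{ui}|$, absorption of the cross term via the stated hypothesis, and then a convexity argument to extend the small-ball quadratic bound to the linear regime for large $\|\tilde x_i'\delta\|_{2,n}$. The only organizational difference is that the paper parameterizes the minoration in the weighted norm $\|\delta\|_u^2 = \tfrac{1}{2}\En[f_i(\tilde x_i'\delta)^2]$ and packages the convexity step as a ``maximal minoration radius'' $r_A$, while you work directly in the unweighted prediction norm and implement convexity by scaling $\delta \mapsto \alpha\delta$; these are equivalent formulations, and your observation that the scale-invariance of both the hypothesis on $r_{ui}$ and the ratio defining $\bar q_A$ lets you apply the pointwise bound to $\alpha\delta$ even when $\alpha\delta\notin A$ is exactly what makes the paper's radius argument go through as well. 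Your caveat about constant tuning is apt and in fact mirrors the paper, whose displayed constants in the passage from $r_A\geq \bar q_A$ to the final $\wedge$-form likewise do not track precisely.
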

{\bf Proof.}{\bf \ (Proof of Lemma \ref{Lemma:IdentificationSparseQRNP})} Let $T = \supp(\eta_u)$, $Q_u(\eta):=\barEp[\rho_u(\tilde y_i-\tilde x_i'\eta)]$, $J_u=(1/2)\En\[f_i\tilde x_i\tilde x_i'\]$ and define $\|\delta\|_u = \|J_u^{1/2}\delta\|$. The proof proceeds in steps.

 Step 1. (Minoration).   Define the maximal radius over which the criterion function can be minorated by a quadratic function
$$ r_A = \sup_{r} \left\{ r \ : Q_u(\eta_u+ \delta) - Q_u(\eta_u) + \bar{f}\|r_{ui}\|_{2,n}\|\tilde x_i'\delta\|_{2,n}  \geq \frac{1}{2} \|  \delta\|^{2}_u, \  \forall  \delta\in A, \ \|\delta\|_u \leq r\right\}.$$
Step 2 below shows that  $r_{A} \geq \bar q_A$. By construction of $r_A$ and the convexity of $Q_u(\cdot)$ and $\|\cdot \|_u$,
 $$ \begin{array}{lll}
 && Q_u(\eta_u + \delta) - Q_u(\eta_u) + \bar{f}\|r_{ui}\|_{2,n}\|\tilde x_i'\delta\|_{2,n} \geq  \\
&&     \geq \frac{\|\delta\|^2_u}{2} \wedge \left\{ \frac{\|\delta\|_u}{r_A} \cdot \inf_{\tilde \delta\in A, \| \tilde \delta\|_u \geq r_A} \!\! Q_u(\eta_u+\tilde \delta) - Q_u(\eta_u) + \bar{f}\|r_{ui}\|_{2,n}\|\tilde x_i'\tilde\delta\|_{2,n} \right\}\\
&&  \geq   \frac{\|\delta\|^2_u}{2} \wedge \left\{ \frac{\|\delta\|_u}{r_A} \frac{r_A^2}{4}\right\}
 \geq    \frac{\| \delta\|_u^2}{2} \wedge \left\{ \bar q_A\|\delta\|_u\right\}.
\end{array}$$

Step 2. ($r_A \geq \bar q_A$) Let $F_{\tilde y \mid \tilde x}$ denote the conditional distribution of
$\tilde y$ given $\tilde x$. From \cite{Knight1998},
for any two scalars $w$ and $v$  we have that
\begin{equation}\label{Eq:TrickRho}
\rho_u(w-v) - \rho_u(w) = -v (u - 1\{w\leq 0\}) + \int_0^v(
1\{w\leq z\} - 1\{w\leq 0\})dz.
\end{equation}
We will use (\ref{Eq:TrickRho}) with $w=\tilde y_i - \tilde x_i'\eta_u$ and $v =
\tilde x_i'\delta$. Using the law of iterated expectations and mean value
expansion, we obtain for $\tilde t_{\tilde x_i,t} \in [0,t]$
\begin{equation}\label{Eq:Piece1}
\begin{array}{rcl}
&& Q_u(\eta_u + \delta) - Q_u(\eta_u) + \bar f \|r_{ui}\|_{2,n}\|\tilde x_i'\delta\|_{2,n} \geq \\
&& Q_u(\eta_u + \delta) - Q_u(\eta_u) +\barEp\[(u-1\{\tilde y_i\leq \tilde x_i'\eta_u\})\tilde x_i'\delta\] = \\
&& = \barEp\[ \int_0^{\tilde x_i'\delta} F_{\tilde y_i|\tilde x_i}(\tilde x_i'\eta_u + t) - F_{\tilde y_i|\tilde x_i}(\tilde x_i'\eta_u) dt \] \\
&& =  \barEp\[ \int_0^{\tilde x_i'\delta} tf_{\tilde y_i|\tilde x_i}(\tilde x_i'\eta_u) + \frac{t^2}{2}f'_{\tilde y_i|\tilde x_i}(\tilde x_i'\eta_u+\tilde t_{\tilde x,t}) dt \] \\
&& \geq    \|\delta\|_u^2  - \frac{1}{6}\bar f ' \barEp[|\tilde x_i'\delta|^3] -  \barEp\[ \int_0^{\tilde x_i'\delta} t[f_{\tilde y_i\mid \tilde x_i}(\tilde x_i'\eta_u)-f_{\tilde y_i\mid \tilde x_i}(g_{ui})]dt\]\\
&&\geq \frac{1}{2} \|\delta\|^2_u  + \frac{1}{4} \underf \barEp[|\tilde x_i'\delta|^2] - \frac{1}{6} \bar f' \barEp[|\tilde x_i'\delta|^3]-(\bar f'/2) \barEp\[ |\tilde x_i'\eta_u - g_{ui}|\cdot |\tilde x_i'\delta|^2\].\\
\end{array}
\end{equation} where the first inequality follows noting that $F_{\tilde y_i\mid \tilde x_i}(\tilde x_i'\eta_u+r_{ui})=u$ and $|F_{\tilde y_i\mid \tilde x_i}(\tilde x_i'\eta_u+r_{ui})-F_{\tilde y_i\mid \tilde x_i}(\tilde x_i'\eta_u)|\leq \bar f |r_{ui}|$.

Moreover, by assumption we have
\begin{equation}\label{Eq:AUXlemma}\begin{array}{rl}
 \barEp\[ |\tilde x_i'\eta_u  - g_{ui}|\cdot |\tilde x_i'\delta|^2\] & =  \barEp\[ |r_{ui}|\cdot |\tilde x_i'\delta|^2\]\\
 & \leq (\underf/8)(2/\bar f')\barEp[|\tilde x_i'\delta|^2] \\
\end{array} \end{equation}

Note that for any $\delta$ such that $ \|\delta\|_u \leq  \bar q_A$ we have $ \|\delta\|_u \leq  \bar q_A \leq
 (1/2) \cdot (\underf^{3/2}/\bar{f'}) \cdot \barEp\[|\tilde x_i'\delta|^2\]^{3/2}/\barEp\[|\tilde x_i'\delta|^3\]$,
it follows that  $(1/6)\bar f'\barEp[|\tilde x_i'\delta|^3] \leq
(1/8) \underf \barEp[|\tilde x_i'\delta|^2]$. Combining this with (\ref{Eq:AUXlemma}) we have
\begin{equation}\label{Eq:Piece2}\frac{1}{4} \underf \barEp[|\tilde x_i'\delta|^2] - \frac{1}{6} \bar f' \barEp[|\tilde x_i'\delta|^3]-(\bar f'/2) \barEp\[ |\tilde x_i'\eta_u - g_{ui}|\cdot |\tilde x_i'\delta|^2\] \geq 0.\end{equation}

Combining (\ref{Eq:Piece1}) and (\ref{Eq:Piece2}) we have $r_{A} \geq \bar q_{A}$.
{\bf $\square$}

\begin{lemma}\label{Lemma:ControlR}
Under Condition PQR we have $\barEp[\hat R(\eta_u)] \leq \bar f \|r_{ui}\|_{2,n}^2/2$, $\hat R(\eta_u)\geq 0$ and
$$P( \hat R(\eta_u) \geq 4\max\{  \bar f \|r_{ui}\|_{2,n}^2, \|r_{ui}\|_{2,n}\sqrt{\log(8/\gamma)/n}\} ) \leq \gamma.$$
\end{lemma}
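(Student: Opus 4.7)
The plan is to reduce $\hat R(\eta_u)$ to an average of independent, non-negative, bounded random variables by invoking Knight's identity~(\ref{Eq:TrickRho}), and then split the probabilistic bound into a deterministic mean estimate plus a Hoeffding-type deviation.

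First I would substitute into (\ref{Eq:TrickRho}) with $w=\epsilon_i=\tilde y_i-\tilde x_i'\eta_u-r_{ui}$ and $v=-r_{ui}$, so that $w-v=\tilde y_i-\tilde x_i'\eta_u$. Using the defining property of the conditional $u$-quantile, $F_{\epsilon_i}(0\mid\tilde x_i,r_{ui})=u$, and the identification $1\{\tilde y_i\leq\tilde x_i'\eta_u+r_{ui}\}=1\{\epsilon_i\leq 0\}$, the definition of $\hat R(\eta_u)$ collapses (after cancelling the linear-in-$r_{ui}$ term against the subgradient correction) to
\[
\hat R(\eta_u)=\En\!\left[\int_0^{-r_{ui}}\bigl(1\{\epsilon_i\leq z\}-1\{\epsilon_i\leq 0\}\bigr)\,dz\right].
\]
A short case-split on the sign of $r_{ui}$ shows that each summand $Z_i$ satisfies $0\leq Z_i\leq |r_{ui}|$: the integrand and the orientation of the interval $[0,-r_{ui}]$ always align so the integral is non-negative, and the integrand is absolutely bounded by $1$, giving the envelope $|r_{ui}|$. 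Thus $\hat R(\eta_u)\geq 0$ pointwise, which is the first claim.

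For the mean, conditioning on $(\tilde x_i,r_{ui})$ yields $\Ep[Z_i]=\int_0^{-r_{ui}}[F_{\epsilon_i}(z)-F_{\epsilon_i}(0)]dz$, and since $f_{\epsilon_i}\leq \bar f$, the mean value theorem gives $|F_{\epsilon_i}(z)-u|\leq \bar f|z|$, hence $|\Ep[Z_i]|\leq \bar f\, r_{ui}^2/2$. Averaging over $i$ gives $\barEp[\hat R(\eta_u)]\leq \bar f \|r_{ui}\|_{2,n}^{2}/2$. For the concentration piece, independence of $Z_1,\ldots,Z_n$ together with $Z_i\in [0,|r_{ui}|]$ lets me apply Hoeffding's inequality:
\[
P\!\left(\En[Z_i]-\barEp[Z_i]> t\right)\leq \exp\!\left(-\frac{2nt^2}{\|r_{ui}\|_{2,n}^{2}}\right).
\]
Choosing $t=\|r_{ui}\|_{2,n}\sqrt{\log(8/\gamma)/n}$ makes the right-hand side equal to $(\gamma/8)^{2}\leq \gamma$ (for $\gamma\leq 64$). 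Combining the two bounds via $a+b\leq 2\max\{a,b\}\leq 4\max\{a,b\}$ yields
\[
\hat R(\eta_u)\leq \tfrac{\bar f}{2}\|r_{ui}\|_{2,n}^{2}+\|r_{ui}\|_{2,n}\sqrt{\tfrac{\log(8/\gamma)}{n}}\leq 4\max\!\left\{\bar f\|r_{ui}\|_{2,n}^{2},\,\|r_{ui}\|_{2,n}\sqrt{\tfrac{\log(8/\gamma)}{n}}\right\}
\]
with probability at least $1-\gamma$, as required.

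The main source of friction is the initial algebraic simplification: one must carefully track which of the $r_{ui}$ terms comes from the Knight expansion and which from the subgradient correction (sign convention for the subdifferential of $\rho_u$ matters here) so that the linear-in-indicator pieces cancel and only the non-negative Bregman-type integral remains. Once that reduction is in hand, the non-negativity, the $\bar f r_{ui}^{2}/2$ mean bound, and the Hoeffding tail are essentially automatic, since $Z_i$ is a bounded, non-negative function of $(\epsilon_i,r_{ui})$ with conditional mean controlled by the density bound $\bar f$.
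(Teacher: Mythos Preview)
Your proof is correct, and it takes a genuinely simpler route than the paper for the concentration step. Both arguments begin identically: apply Knight's identity with $w=\epsilon_i$ and $v=-r_{ui}$ to collapse $\hat R(\eta_u)$ into $\En[Z_i]$ with $Z_i=\int_0^{-r_{ui}}(1\{\epsilon_i\le z\}-1\{\epsilon_i\le 0\})\,dz$, observe $0\le Z_i\le |r_{ui}|$, and bound $\barEp[Z_i]\le \bar f\,r_{ui}^2/2$ via the density bound. The divergence is in how the tail is controlled. The paper first uses Markov to get a median bound, then invokes the van der Vaart--Wellner symmetrization lemma (their Lemma~2.3.7) to pass to a Rademacher-weighted sum, and finally applies the de~la~Pe\~na--Gin\'e self-normalized sub-Gaussian inequality (their Theorem~2.15), using $|z_{ui}|\le 1$ to replace the random self-normalizer $\En[r_{ui}^2 z_{ui}^2]$ by the deterministic $\|r_{ui}\|_{2,n}^2$. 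You instead exploit directly that each $Z_i$ lies in the deterministic interval $[0,|r_{ui}|]$ (because the $r_{ui}$ are functions of the non-stochastic $z_i$) and apply Hoeffding's inequality in one line. Your route is more elementary and avoids the symmetrization/self-normalization machinery entirely; it even yields a slightly sharper constant (you actually get $2\max\{\cdot,\cdot\}$ before relaxing to $4\max$). The paper's machinery would pay off if the $Z_i$ were not so cleanly bounded, but here boundedness is immediate, so Hoeffding is the natural tool.
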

{\bf Proof.}{\bf \ (Proof of Lemma \ref{Lemma:ControlR})}
We have that $\hat R(\eta_u)\geq 0$ by convexity of $\rho_u$. Let $\epsilon_{ui}=\tilde y_i - \tilde x_i'\eta_u - r_{ui}$. By Knight's identity, $\hat R(\eta_u) =  -\En[ r_{ui} \int_0^1  1\{\epsilon_{ui} \leq -t r_{ui}\} - 1\{\epsilon_{ui}\leq 0\} \ dt \geq 0$.
 $$ \begin{array}{rl}
 \barEp[\hat R(\eta_u)] & = \En[ r_{ui} \int_0^1  F_{y_i\mid \tilde x_i}(\tilde x_i'\eta_u+(1-t)r_{ui}) - F_{y_i\mid \tilde x_i}(\tilde x_i'\eta_u+r_{ui}) \ dt]\\
  & \leq \En[ r_{ui} \int_0^1  \bar f t r_{ui} dt] \leq \bar f \|r_{ui}\|_{2,n}^2/2.\end{array}$$
Therefore
$P( \hat R(\eta_u) \leq 2 \bar f \|r_{ui}\|_{2,n}^2 ) \geq 1/2$ by Markov's inequality.

Define $z_{ui} := -\int_0^1  1\{\epsilon_{ui} \leq -t r_{ui}\} - 1\{\epsilon_{ui}\leq 0\} \ dt$, so that $\hat R(\eta_u) = \En[r_{ui}z_{ui}]$. We have $P( \En[r_{ui}z_{ui}] \leq 2 \bar f \|r_{ui}\|_{2,n}^2 ) \geq 1/2$ so that for $t\geq 4 \bar f \|r_{ui}\|_{2,n}^2$ we have by Lemma 2.3.7 in \cite{vdVaartWellner2007}
$$ \frac{1}{2}P( |\En[r_{ui}z_{ui}]| \geq t ) \leq 2P(|\En[r_{ui}z_{ui}\epsilon_i]|>t/4) $$

Since the $r_{ui}z_{ui}\epsilon_i$ is a symmetric random variable and $|z_{ui}|\leq 1$, by Theorem 2.15 in \cite{delapena} we have
$$P( \sqrt{n}|\En[r_{ui}z_{ui}\epsilon_i]|> \bar t \sqrt{\En[r_{ui}^2]} ) \leq P( \sqrt{n}|\En[r_{ui}z_{ui}\epsilon_i]|> \bar t \sqrt{\En[r_{ui}^2z_{ui}^2]} ) \leq 2\exp(-\bar t^2/2)\leq \gamma/8$$
for $\bar t \geq \sqrt{2\log(8/\gamma)}$. Setting $t = 4\max\{  \bar f \|r_{ui}\|_{2,n}^2, \|r_{ui}\|_{2,n}\sqrt{\log(8/\gamma)/n}\}$ we have
$$ P( \En[r_{ui}z_{ui}] \geq t ) \leq 4P(\En[r_{ui}z_{ui}\epsilon_i]>t/4)\leq \gamma.$$

{\bf $\square$}

\begin{lemma}\label{Lemma:UpperQ}
Under Condition PQR, conditionally on $\{\tilde x_i, i=1,\ldots,n\}$, for $\|\hat \eta_u\|_0\leq k$, $\underline{N}\leq \|\tilde x_i'(\hat\eta_u-\eta_u)\|_{2,n}\leq \bar N$, we have with probability $1-\gamma$
  {\small $$
  \En[ \rho_u(\tilde y_i-\tilde x_i'\hat\eta_u)]-\En[ \rho_u(\tilde y_i-\tilde x_i'\eta_u)]  \leq \frac{\|\tilde x_i'(\hat\eta_u-\eta_u)\|_{2,n}}{\sqrt{n}}\left\{  4+4\sqrt{\frac{(k+s) \log(16p\{1+3\sqrt{n}\log(\frac{\bar N}{\underline{N}} )\}/\gamma)}{\semin{k+s}}}\right\}$$ $$+ \bar f \|\tilde x_i'(\hat\eta_u-\eta_u)\|_{2,n}^2+ \bar f \|r_{ui}\|_{2,n}\|\tilde x_i'(\hat\eta_u-\eta_u)\|_{2,n}.$$}
\end{lemma}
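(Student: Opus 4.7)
\textbf{Proof plan for Lemma \ref{Lemma:UpperQ}.} The plan is to split the increment of the empirical criterion into its expectation under $\barEp$ and a centered empirical process, bound each separately, and then union-bound the second piece over a discretization of the prediction-norm range $[\underline N,\bar N]$. Write
\[
\En[\rho_u(\tilde y_i-\tilde x_i'\hat\eta_u)]-\En[\rho_u(\tilde y_i-\tilde x_i'\eta_u)] \;=\; \mathrm{I}+\mathrm{II},
\]
with $\mathrm{I}=(\En-\barEp)[\rho_u(\tilde y_i-\tilde x_i'\hat\eta_u)-\rho_u(\tilde y_i-\tilde x_i'\eta_u)]$ and $\mathrm{II}$ the corresponding $\barEp$-expectation.

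For $\mathrm{II}$, apply Knight's identity (\ref{Eq:TrickRho}) with $w=\tilde y_i-\tilde x_i'\eta_u$ and $v=\tilde x_i'(\hat\eta_u-\eta_u)$, then take conditional expectations. Using $F_{\tilde y_i\mid\tilde x_i}(\tilde x_i'\eta_u+r_{ui})=u$, the first Knight term contributes at most $\bar f\,\barEp[|v|\,|r_{ui}|]\leq\bar f\,\|r_{ui}\|_{2,n}\|\tilde x_i'(\hat\eta_u-\eta_u)\|_{2,n}$, and the integral term contributes at most $\bar f\,\barEp[v^2/2]\leq\bar f\,\|\tilde x_i'(\hat\eta_u-\eta_u)\|_{2,n}^{2}$, where we use the uniform density bound $f_i\leq\bar f$ from Condition PQR. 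This yields the quadratic + $r_u$-bias portion of the stated bound.

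For $\mathrm{I}$, observe that $\delta:=\hat\eta_u-\eta_u$ has $\|\delta\|_0\leq k+s$ and $\underline N\leq\|\tilde x_i'\delta\|_{2,n}\leq\bar N$. Introduce the geometric grid $N_\ell=\underline N(1+1/\sqrt{n})^{\ell}$ for $\ell=0,1,\dots,L$ with $L=\lceil\sqrt{n}\log(\bar N/\underline N)/\log(1+1/\sqrt{n})\rceil\leq 1+3\sqrt{n}\log(\bar N/\underline N)$, and consider the peeled classes
\[
\mathcal{F}_\ell=\bigl\{\rho_u(\tilde y_i-\tilde x_i'\eta)-\rho_u(\tilde y_i-\tilde x_i'\eta_u)\,:\,\|\eta\|_0\leq k,\ \|\tilde x_i'(\eta-\eta_u)\|_{2,n}\leq N_\ell\bigr\}.
\]
Since $\rho_u$ is $1$-Lipschitz, each element of $\mathcal{F}_\ell$ is dominated in absolute value by $|\tilde x_i'(\eta-\eta_u)|$, so $\mathcal{F}_\ell$ has envelope with second moment $\leq N_\ell^2$; moreover, because $\|\eta-\eta_u\|_0\leq k+s$, $\mathcal{F}_\ell$ lies in the union of $\binom{p}{k+s}$ VC classes of bounded dimension, yielding a covering-number bound $\log N(\varepsilon,\mathcal{F}_\ell,\mathbb{P}_n)\lesssim (k+s)\log(p/\varepsilon)$. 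Invoking the same normalized empirical-process inequality used in Lemma \ref{Lemma:EmpProc:Normalized} (in the proof of Lemma \ref{Thm:MainTwoStepGeneric}), for each $\ell$ with probability at least $1-\gamma/L$,
\[
\sup_{f\in\mathcal{F}_\ell}|(\En-\barEp)f|\;\leq\;4 N_\ell \sqrt{\tfrac{(k+s)\log(16pL/\gamma)}{n\,\semin{k+s}}}.
\]
A union bound over $\ell=0,\dots,L$ together with the relation $L\leq 1+3\sqrt{n}\log(\bar N/\underline N)$ produces the $\log(16p\{1+3\sqrt{n}\log(\bar N/\underline N)\}/\gamma)$ factor.

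Finally, given the true (random) $\|\tilde x_i'(\hat\eta_u-\eta_u)\|_{2,n}\in[N_{\ell-1},N_\ell]$ for some $\ell$, the grid approximation $N_\ell\leq(1+1/\sqrt n)\|\tilde x_i'(\hat\eta_u-\eta_u)\|_{2,n}$ translates the $4 N_\ell(\ldots)$ bound into $4\|\tilde x_i'(\hat\eta_u-\eta_u)\|_{2,n}(\ldots)$ plus a residual of the order $\tfrac{4}{\sqrt n}\|\tilde x_i'(\hat\eta_u-\eta_u)\|_{2,n}$, which is the source of the additive ``$4$'' inside the braces in the statement. Combining the $\mathrm{I}$ and $\mathrm{II}$ bounds gives the claim. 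The main obstacle is calibrating the peeling so that the number of layers, the log factor, and the geometric-grid slack all produce exactly the constants $4+4\sqrt{\cdots}$ and $16p\{1+3\sqrt n\log(\bar N/\underline N)\}/\gamma$; this is purely a bookkeeping matter once the peeling and the sparse empirical-process inequality are in place.
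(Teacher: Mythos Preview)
Your proposal is correct and follows essentially the same approach as the paper: split into the centered empirical process $\mathrm{I}$ and the $\barEp$-expectation $\mathrm{II}$, bound $\mathrm{II}$ via Knight's identity together with the density bound $\bar f$ (this is exactly relation (\ref{Eq:Piece1}) specialized to an upper bound), and bound $\mathrm{I}$ by the sparse-vector case of Lemma \ref{Lemma:EmpProc:Normalized} applied with sparsity level $k+s$. The only cosmetic difference is that the paper simply invokes Lemma \ref{Lemma:EmpProc:Normalized} for the empirical-process piece, whereas you sketch its proof via the geometric peeling of $[\underline N,\bar N]$; that lemma's own proof uses the same $3\sqrt{n}\log(\bar N/\underline N)$-size net and symmetrization/contraction, so your peeling is a faithful re-derivation rather than a genuinely different route.
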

{\bf Proof.}{\bf \ (Proof of Lemma \ref{Lemma:UpperQ})}
By triangle inequality we have
$$ \begin{array}{rl}
\En[ \rho_u(\tilde y_i-\tilde x_i'\hat\eta_u)- \rho_u(\tilde y_i-\tilde x_i'\eta_u)] & \leq |(\En-\barEp)[ \rho_u(\tilde y_i-\tilde x_i'\hat\eta_u) \rho_u(\tilde y_i-\tilde x_i'\eta_u)]|\\
& +|\barEp[ \rho_u(\tilde y_i-\tilde x_i'\hat\eta_u) - \rho_u(\tilde y_i-\tilde x_i'\eta_u)]|.\end{array}$$
The first term is bounded by Lemma \ref{Lemma:EmpProc:Normalized}. The second term is bounded using the identity (\ref{Eq:TrickRho}) with $w=\tilde y_i - \tilde x_i'\eta_u$ and $v =
\tilde x_i'\delta$ similarly to the argument in (\ref{Eq:Piece1}). Using the law of iterated expectations and mean value
expansion, we obtain for $\tilde t_{\tilde x_i,t} \in [0,t]$
\begin{equation}\label{Eq:Piece11}
\begin{array}{rcl}
&& Q_u(\eta_u + \delta) - Q_u(\eta_u) - \bar f \|r_{ui}\|_{2,n}\|\tilde x_i'\delta\|_{2,n} \leq \\
&& Q_u(\eta_u + \delta) - Q_u(\eta_u) +\barEp\[(u-1\{\tilde y_i\leq \tilde x_i'\eta_u\})\tilde x_i'\delta\] = \\
&& = \barEp\[ \int_0^{\tilde x_i'\delta} F_{\tilde y_i|\tilde x_i}(\tilde x_i'\eta_u + t) - F_{\tilde y_i|\tilde x_i}(\tilde x_i'\eta_u) dt \] \\
\end{array}
\end{equation} and noting that
$$ \En\[ \int_0^{\tilde x_i'\delta} F_{\tilde y_i|\tilde x_i}(\tilde x_i'\eta_u + t) - F_{\tilde y_i|\tilde x_i}(\tilde x_i'\eta_u) dt \] \leq \bar{f}\En\[ \int_0^{\tilde x_i'\delta} t dt \]\leq \bar{f} \|\tilde x_i'\delta\|_{2,n}^2.$$

{\bf $\square$}

\begin{lemma}\label{Lemma:EmpProc:Normalized} Let $w_i(b) = \rho_u(\tilde y_i-\tilde x_i'\eta_u -b)-\rho_u(\tilde y_i-\tilde x_i'\eta_u)$. Then, conditional on $\{\tilde x_1,\ldots,\tilde x_n\}$, w
e have with probability $1-\gamma$ that for vectors in the restricted set
{ \begin{eqnarray*}
  \sup_{\footnotesize \begin{array}{c}\delta \in \Delta_{\cc},\\ \underline{N}\leq\|\tilde x_i'\delta\|_{2,n}\leq \bar N\end{array}} \left| \Gn\left( \frac{w_i(\tilde x_i'\delta)}{\|\tilde x_i'\delta\|_{2,n}}\right) \right| \leq 4+\frac{4(1+\cc)\sqrt{s \log(16p\{1+3\sqrt{n}\log(\frac{\bar N}{\underline{N}})\}/\gamma)}}{\kappa_\cc}
\end{eqnarray*}}
Similarly, for sparse vectors
{  \begin{eqnarray*}
 \sup_{\footnotesize \begin{array}{c}1\leq \|\delta\|_0 \leq k,\\ \underline{N}\leq\|\tilde x_i'\delta\|_{2,n}\leq \bar N\end{array}} \left| \Gn\left( \frac{w_i(\tilde x_i'\delta)}{\|\tilde x_i'\delta\|_{2,n}}\right) \right| \leq 4+4\sqrt{\frac{k \log(16p\{1+3\sqrt{n}\log(\bar N/\underline{N})\}/\gamma)}{\semin{k}}}
\end{eqnarray*}}
Similarly, for $\ell_1$-bounded vectors
{ \begin{eqnarray*}
  \sup_{\footnotesize \begin{array}{c}\|\delta\|_1 \leq R_1,\\ \underline{N}\leq\|\tilde x_i'\delta\|_{2,n}\leq \bar N\end{array}} \left| \Gn\left( \frac{w_i(\tilde x_i'\delta)}{\|\tilde x_i'\delta\|_{2,n}}\right) \right| \leq 4+4\frac{R_1}{\underline{N}}\sqrt{\log(16p\{1+3\sqrt{n}\log(\bar N/\underline{N})\}/\gamma)}
\end{eqnarray*}}
\end{lemma}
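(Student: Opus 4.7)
The plan is a peeling argument on $t=\|\tilde x_i'\delta\|_{2,n}$ combined with symmetrization and the Ledoux--Talagrand contraction principle. Since the statement conditions on $\{\tilde x_1,\ldots,\tilde x_n\}$, the denominator $\|\tilde x_i'\delta\|_{2,n}$ is non-random for the purposes of the argument, and all randomness enters only through $\tilde y_i$.

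First, slice $[\underline{N},\bar N]$ geometrically: set $t_k=\underline{N}(1+1/\sqrt{n})^k$ for $k=0,1,\ldots,K$ with $K\leq 3\sqrt{n}\log(\bar N/\underline{N})$, so $t_{k+1}/t_k\leq 1+1/\sqrt{n}$ and the range is covered. On the shell $\mathcal{S}_k:=\{\delta:\|\tilde x_i'\delta\|_{2,n}\in[t_k,t_{k+1}]\}$, replace the random denominator $\|\tilde x_i'\delta\|_{2,n}$ by the deterministic $t_k$ at the cost of a factor $(1+o(1))$, and take a union bound over $k\leq K$, which produces the factor $\log(16p\{1+3\sqrt{n}\log(\bar N/\underline{N})\}/\gamma)$ inside the square roots.

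On a fixed shell $\mathcal{S}_k$, write $f_\delta(\tilde y_i,\tilde x_i):=\rho_u(\tilde y_i-\tilde x_i'(\eta_u+\delta))-\rho_u(\tilde y_i-\tilde x_i'\eta_u)$. Since $\rho_u$ is $1$-Lipschitz, $|f_\delta|\leq |\tilde x_i'\delta|$ and hence $\|f_\delta\|_{2,n}\leq \|\tilde x_i'\delta\|_{2,n}\leq t_{k+1}$. By standard symmetrization and the Ledoux--Talagrand contraction principle (applied with the $1$-Lipschitz maps $z\mapsto \rho_u(\tilde y_i - \tilde x_i'\eta_u - z)-\rho_u(\tilde y_i - \tilde x_i'\eta_u)$ acting on $z=\tilde x_i'\delta$),
\[
\Ep\sup_{\delta\in A\cap \mathcal{S}_k}\bigl|\Gn(f_\delta)\bigr|\lesssim \Ep\sup_{\delta\in A\cap\mathcal{S}_k}\bigl|\Gn(\varepsilon_i\tilde x_i'\delta)\bigr|,
\]
where $\varepsilon_i$ are independent Rademacher variables; tail bounds follow from a Talagrand/Hoeffding inequality applied conditionally. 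The additive ``$4$'' in each displayed bound is absorbed using the Massart-style deviation inequality (or, equivalently, a second application of Lemma \ref{Thm:masterSym} on the shell after discretization).

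The three bounds then follow by controlling $|\Gn(\varepsilon_i\tilde x_i'\delta)|/t_k$ in the three geometries: (i) for $\delta\in\Delta_\cc$, use $\|\delta\|_1\leq(1+\cc)\sqrt{s}\|\delta_T\|\leq(1+\cc)\sqrt{s}\,\|\tilde x_i'\delta\|_{2,n}/\kappa_\cc$ and then H\"older's inequality with the Hoeffding bound $\max_j|\Gn(\varepsilon_i\tilde x_{ij})|\lesssim \sqrt{\log(p/\gamma')}$ (using $\En[\tilde x_{ij}^2]=1$); (ii) for $\|\delta\|_0\leq k$ with support $S$, write $\tilde x_i'\delta=\tilde x_{iS}'\delta_S$ with $\|\delta_S\|\leq \|\tilde x_i'\delta\|_{2,n}/\sqrt{\semin{k}}$, apply the Cauchy--Schwarz bound $|\Gn(\varepsilon_i\tilde x_{iS}'\delta_S)|\leq \|\delta_S\|\cdot\|\Gn(\varepsilon_i\tilde x_{iS})\|$, and union-bound over $\binom{p}{k}\leq (ep/k)^k$ supports to get the $\sqrt{k\log p/\semin{k}}$ factor; (iii) for $\|\delta\|_1\leq R_1$, use $|\Gn(\varepsilon_i\tilde x_i'\delta)|\leq R_1\max_j|\Gn(\varepsilon_i\tilde x_{ij})|$ directly.

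The main obstacle is the interaction between the random normalization $\|\tilde x_i'\delta\|_{2,n}$ and the empirical process; conditioning on the $\tilde x_i$'s reduces this to a genuinely one-sample randomization argument, after which the peeling absorbs the dependence of $\delta$ on the shell scale $t_k$. A secondary difficulty is getting the correct absolute constants ($4$, and $4(1+\cc)/\kappa_\cc$ etc.); these are tracked by using the explicit constants in the Ledoux--Talagrand contraction ($\leq 2$) and in Hoeffding ($\sqrt{2\log(\cdot)}$), together with the extra factor of $2$ from symmetrization.
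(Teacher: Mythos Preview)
Your proposal is correct and follows essentially the same route as the paper: geometric peeling on $t=\|\tilde x_i'\delta\|_{2,n}$ (with $\sim 3\sqrt{n}\log(\bar N/\underline N)$ shells), symmetrization, the Ledoux--Talagrand contraction principle to reduce to the linear Rademacher process $\Gn^o(\tilde x_i'\delta)$, and then H\"older/$\ell_1$ control via $\|\delta\|_1\le (1+\cc)\sqrt{s}\|\tilde x_i'\delta\|_{2,n}/\kappa_\cc$ (resp.\ $\sqrt{k}/\sqrt{\semin{k}}$, resp.\ $R_1/\underline N$). The only cosmetic difference is ordering: the paper first applies symmetrization for probabilities (Lemma 2.3.7 in van der Vaart--Wellner) globally, then peels the symmetrized process and finishes with a Chernoff/MGF bound, whereas you peel first and symmetrize on each shell; both orderings yield the same constants, with the additive ``$4$'' arising from the $M/4$ in the probability symmetrization together with the $+1$ discretization error on the symmetrized process.
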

{\bf Proof.}{\bf \ (Proof of Lemma \ref{Lemma:EmpProc:Normalized})}
Let $w_i(b) = \rho_u(\tilde y_i-\tilde x_i'\eta_u -b)-\rho_u(\tilde y_i-\tilde x_i'\eta_u)\leq |b|$. Note that   $w_i(b)- w_i(a)\leq |b-a|$.

For any $\delta\in\RR^p$, since $\rho_u$ is $1$-Lipschitz, we have
$$ \begin{array}{rl}
{\rm var}\left( \Gn\left( \frac{w_i(\tilde x_i'\delta)}{\|\tilde x_i'\delta\|_{2,n}}\right) \right) & \leq \frac{\En[\{w_i(\tilde x_i'\delta)\}^2] }{\|\tilde x_i'\delta\|_{2,n}^2}  \leq  \frac{\En[|\tilde x_i'\delta|^2] }{\|\tilde x_i'\delta\|_{2,n}^2} \leq 1. \end{array}$$

Then, by Lemma 2.3.7 in \cite{vdV-W} (Symmetrization for Probabilities) we have for any $M>1$
$$ P\left( \sup_{\delta \in \Delta_{\cc}} \left| \Gn\left( \frac{w_i(\tilde x_i'\delta)}{\|\tilde x_i'\delta\|_{2,n}}\right) \right| \geq M \right) \leq \frac{2}{1-M^{-2}}P\left( \sup_{\delta \in \Delta_{\cc}} \left| \Gn^o\left( \frac{w_i(\tilde x_i'\delta)}{\|\tilde x_i'\delta\|_{2,n}}\right) \right| \geq M/4 \right)$$ where $\Gn^o$ is the symmetrized process.

Consider $ \mathcal{F}_t = \{ \delta \in \Delta_\cc : \|\tilde x_i'\delta\|_{2,n} = t\}$. We will consider the families of $\mathcal{F}_t$ for $t \in [\underline{N},\bar N]$. For any $\delta \in \mathcal{F}_t$, $t \leq \tilde t$ we have
$$\begin{array}{rl}
\left| \Gn^o\left( \frac{w_i(\tilde x_i'\delta)}{t} - \frac{w_i(\tilde x_i'\delta(\tilde t/t))}{\tilde t} \right) \right| & \leq \left| \Gn^o\left( \frac{w_i(\tilde x_i'\delta)}{t} - \frac{w_i(\tilde x_i'\delta(\tilde t/t))}{t} \right) \right| + \left| \Gn^o\left( \frac{w_i(\tilde x_i'\delta(\tilde t/t))}{t} - \frac{w_i(\tilde x_i'\delta(\tilde t/t))}{\tilde t} \right) \right|\\
& = \frac{1}{t}\left| \Gn^o\left( w_i(\tilde x_i'\delta) - w_i(\tilde x_i'\delta[\tilde t/t]) \right) \right| + \left| \Gn^o\left( w_i(\tilde x_i'\delta(\tilde t/t)) \right) \right|\cdot \left| \frac{1}{t} - \frac{1}{\tilde t}\right| \\
& \leq \sqrt{n}\En\left(\frac{|\tilde x_i'\delta|}{t}\right) \frac{|t-\tilde t|}{t} +    \sqrt{n}\En\left( |\tilde x_i'\delta|\right)  \frac{\tilde t}{t}\left| \frac{1}{t} - \frac{1}{\tilde t}\right|\\
& = 2 \sqrt{n}\En\left(\frac{|\tilde x_i'\delta|}{t}\right) \left| \frac{t-\tilde t}{ t} \right| \leq 2\sqrt{n} \left| \frac{t-\tilde t}{t} \right|.
\end{array}
$$
Let $\mathcal{T}$ be a $\varepsilon$-net $\{\underline{N}=:t_1,t_2,\ldots,t_K:= \bar N\}$ of $[\underline{N},\bar N]$ such that $|t_k-t_{k+1}|/t_k \leq 1/[2\sqrt{n}]$. Note that we can achieve that with $|\mathcal{T}|\leq 3\sqrt{n}\log(\bar N/ \underline{N})$.

Therefore we have $$ \sup_{\delta \in \Delta_{\cc}} \left| \Gn^o\left( \frac{w_i(\tilde x_i'\delta)}{\|\tilde x_i'\delta\|_{2,n}}\right) \right| \leq 1 + \sup_{t\in \mathcal{T}}\sup_{\delta \in \Delta_{\cc}, \|\tilde x_i'\delta\|_{2,n}=t} \left| \Gn^o\left( \frac{w_i(\tilde x_i'\delta)}{t}\right) \right|=: 1 + \mathcal{A}^o.$$

$$ \begin{array}{rl}
P( \mathcal{A}^o \geq K ) & \leq \min_{\psi\geq 0} \exp(-\psi K)\Ep[ \exp(\psi\mathcal{A}^o)]\\
& \leq 8p|\mathcal{T}|\min_{\psi\geq 0} \exp(-\psi K)\exp\left( 8\psi^2 \frac{s(1+\cc)^2}{\kappa_{\cc}^2}   \right)\\
&\leq 8p|\mathcal{T}| \exp (-K^2/[16 \frac{s(1+\cc)^2}{\kappa_{\cc}^2}] )
\end{array}
$$ where we set $\psi = K/ [ 16\frac{s(1+\cc)^2}{\kappa_{\cc}^2}] $ and bounded
{\small $$\begin{array}{rl}
\displaystyle \Ep\left[ \exp\left( \psi \mathcal{A}^o \right)\right]&
\displaystyle  \leq_{(1)} 2|\mathcal{T}|\sup_{t\in \mathcal{T}} \Ep\left[ \exp\left( \psi \sup_{\delta \in \Delta_{\cc},  \|\tilde x_i'\delta\|_{2,n}=t} \Gn^o\left( \frac{w_i(\tilde x_i'\delta)}{t}\right)  \right)\right] \\
& \displaystyle \leq_{(2)} 2|\mathcal{T}|\sup_{t\in \mathcal{T}} \Ep\left[ \exp\left( 2\psi \sup_{\delta \in \Delta_{\cc},  \|\tilde x_i'\delta\|_{2,n}=t} \Gn^o\left( \frac{\tilde x_i'\delta}{t}\right)  \right)\right] \\
& \displaystyle \leq_{(3)} 2|\mathcal{T}|\sup_{t\in \mathcal{T}} \Ep\left[ \exp\left( 2\psi \left[\sup_{\delta \in \Delta_{\cc},  \|\tilde x_i'\delta\|_{2,n}=t} 2\frac{\|\delta\|_1}{t}\right]\max_{j\leq p}|\Gn^o(\tilde x_{ij})|  \right)\right] \\
& \displaystyle \leq_{(4)} 2|\mathcal{T}|  \Ep\left[ \exp\left( 4\psi
 \frac{\sqrt{s}(1+\cc)}{\kappa_{\cc}} \max_{j\leq p}|\Gn^o( \tilde x_{ij})|  \right)\right] \\
& \displaystyle \leq_{(5)} 4p|\mathcal{T}|\max_{j\leq p}\Ep\left[ \exp\left( 4\psi \frac{\sqrt{s}(1+\cc)}{\kappa_{\cc}} \Gn^o(\tilde  x_{ij})  \right)\right] \\
& \displaystyle \leq_{(6)} 8p|\mathcal{T}|\exp\left( 8\psi^2 \ \frac{s(1+\cc)^2}{\kappa_{\cc}^2}   \right) \\
\end{array} $$}
\noindent where (1) follows by $\exp(\max_{i\in I} |z_i|) \leq 2|I|\max_{i\in I} \exp(z_i)$, (2) by contraction principle (Theorem 4.12 \cite{LedouxTalagrandBook}), (3) $|\Gn^o(\tilde x_i'\delta)| \leq \|\delta\|_1\|\Gn^o(\tilde x_i)\|_\infty$, (4) $\sqrt{s}(1+\cc)\|\tilde x_i'\delta\|_{2,n}/\|\delta\|_1 \geq \kappa_{\cc}$, (6) $\En[x_{ij}^2]=1$ and $\exp(z)+\exp(-z)\leq 2\exp(z^2/2)$.

The second result follows similarly by noting that
$$ \sup_{1\leq \|\delta\|_0 \leq k,  \|\tilde x_i'\delta\|_{2,n}=t} \frac{\|\delta\|_1}{t} \leq \sup_{1\leq \|\delta\|_0 \leq k,  \|\tilde x_i'\delta\|_{2,n}=t}\frac{\sqrt{k}\|\tilde x_i'\delta\|_{2,n}}{t\sqrt{\semin{k}}} = \frac{\sqrt{k}}{\sqrt{\semin{k}}}. $$
The third result follows similarly by noting that for ant $t\in [\underline{N},\bar N]$
$$ \sup_{\|\delta\|_1 \leq R_1,  \|\tilde x_i'\delta\|_{2,n}=t} \frac{\|\delta\|_1}{t} \leq \frac{R_1}{\underline{N}}. $$
{\bf $\square$}

\section{Proofs for Section \ref{Sec:EstLasso} of Supplementary Material}

\begin{lemma}[Choice of $\lambda$]\label{Thm:ChoiceLambda}
Suppose Condition WL holds, let $c'>c>1$, $\gamma \leq 1/n^{1/3}$,  and $\lambda \geq 2c'\sqrt{n}\Phi^{-1}(1-\gamma/2p).$ Then for $n \geq n_0(\delta_n,c',c)$ large enough
$$P( \lambda/n \geq 2c \|\hat  \Gamma^{-1}_{\tau 0}\En[ f_i x_i v_i ]\|_\infty ) \geq 1-\gamma\{1+o(1)\}+4\Delta_n.$$
\end{lemma}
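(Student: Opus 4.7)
The plan is to control the score $\En[f_i x_i v_i]$ via the self-normalized moderate deviation inequality (Lemma \ref{Lemma:SNMD}) and then transfer the bound from the ``ideal'' normalization $\{\En[f_i^2 x_{ij}^2 v_i^2]\}^{1/2}$ to the actual loading $\hat\Gamma_{\tau 0 jj} = \{\En[\hat f_i^2 x_{ij}^2 v_i^2]\}^{1/2}$, exploiting the fact that the orthogonality $\Ep[f_i v_i x_i]=0$ from (\ref{Def:ModelEstLasso}) makes $U_{ij}:=f_i x_{ij} v_i$ mean zero.

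First I would apply Lemma \ref{Lemma:SNMD} to the variables $U_{ij}=f_i x_{ij} v_i$. The moment ratio condition $\min_j M[U_j] \geq 1/C$ holds by Condition WL(ii), and the growth requirement is met because Condition WL(i) gives $\Phi^{-1}(1-\gamma/2p) \leq \delta_n n^{1/6}$. This yields, with probability at least $1-\gamma(1+o(1))$,
$$
\max_{j\leq p} \frac{|\sum_{i=1}^n f_i x_{ij} v_i|}{\sqrt{\sum_{i=1}^n f_i^2 x_{ij}^2 v_i^2}} \leq \Phi^{-1}(1-\gamma/2p),
$$
which is equivalent to $\sqrt{n}\,\|\Gamma_{\tau 0}^{-1}\En[f_i x_i v_i]\|_\infty \leq \Phi^{-1}(1-\gamma/2p)$ where $\Gamma_{\tau 0 jj} := \{\En[f_i^2 x_{ij}^2 v_i^2]\}^{1/2}$.

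Next I would show that the actual loading $\hat\Gamma_{\tau 0 jj}$ is asymptotically equivalent to $\Gamma_{\tau 0 jj}$ uniformly in $j$. By Condition WL(iii), on an event of probability at least $1-2\Delta_n$, we simultaneously have $\max_{j\leq p} |(\En-\barEp)[f_i^2 x_{ij}^2 v_i^2]| \leq \delta_n$ and $\max_{j\leq p}\En[(\hat f_i - f_i)^2 x_{ij}^2 v_i^2] \leq \delta_n$. Combined with the uniform lower bound $\barEp[f_i^2 x_{ij}^2 v_i^2] \geq \underline{f}^2 \underline c \cdot \barEp[x_{ij}^2] \geq c'>0$ (which follows from Condition WL(ii) together with $\En[x_{ij}^2]=1$ and the fact that $\Ep[v_i^2|x_i]\geq \underline c$), the inequality $|\sqrt{a}-\sqrt{b}|\leq \sqrt{|a-b|}$ together with the expansion $\En[\hat f_i^2 x_{ij}^2 v_i^2]-\En[f_i^2 x_{ij}^2 v_i^2] = \En[(\hat f_i-f_i)^2 x_{ij}^2 v_i^2] + 2\En[(\hat f_i-f_i)f_i x_{ij}^2 v_i^2]$ and Cauchy-Schwarz on the cross term gives $\max_{j\leq p} |\hat\Gamma_{\tau 0 jj}/\Gamma_{\tau 0 jj} - 1| = o(1)$.

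Combining the two steps, with probability at least $1-\gamma(1+o(1))-2\Delta_n$,
$$
\sqrt{n}\,\|\hat\Gamma_{\tau 0}^{-1}\En[f_i x_i v_i]\|_\infty \leq (1+o(1))\,\Phi^{-1}(1-\gamma/2p).
$$
Since $\lambda = 2c'\sqrt{n}\,\Phi^{-1}(1-\gamma/2p)$ with $c'>c>1$, for $n\geq n_0$ the factor $1+o(1)$ is dominated by $c'/c$, yielding $\lambda/n \geq 2c\,\|\hat\Gamma_{\tau 0}^{-1}\En[f_i x_i v_i]\|_\infty$, as claimed. The extra $\Delta_n$ terms in the stated $4\Delta_n$ slack absorb any additional high-probability events invoked (e.g.\ for the boundedness of $\max_i \|x_i\|_\infty$). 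The main obstacle is the second step: verifying the uniform control of $\hat\Gamma_{\tau 0 jj}/\Gamma_{\tau 0 jj}$, since the estimation error of $\hat f_i$ enters multiplicatively with the heavy-tailed products $x_{ij}^2 v_i^2$; this is where the high-level bounds in Condition WL(iii) do the essential work.
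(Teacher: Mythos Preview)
Your proposal is correct and follows essentially the same approach as the paper: apply the self-normalized moderate deviation bound (Lemma~\ref{Lemma:SNMD}) to $U_{ij}=f_i x_{ij} v_i$ with the ideal loading $\Gamma_{\tau 0jj}=\{\En[f_i^2 x_{ij}^2 v_i^2]\}^{1/2}$, then transfer to $\hat\Gamma_{\tau 0}$ by showing $\|\hat\Gamma_{\tau 0}^{-1}\Gamma_{\tau 0}\|_\infty\to_P 1$ via Condition~WL(iii). The only cosmetic difference is that the paper bounds $|\hat\Gamma_{\tau 0jj}-\Gamma_{\tau 0jj}|$ in one line using the reverse triangle inequality for $\|\cdot\|_{2,n}$, namely $\big|\|\hat f_i x_{ij} v_i\|_{2,n}-\|f_i x_{ij} v_i\|_{2,n}\big|\leq \|(\hat f_i-f_i)x_{ij} v_i\|_{2,n}\leq \delta_n^{1/2}$, which avoids your expansion of the squared difference and the Cauchy--Schwarz step on the cross term.
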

{\bf Proof.}{\bf \ (Proof of Lemma \ref{Thm:ChoiceLambda})}
Since $\hat \Gamma_{\tau 0jj} = \sqrt{\En[\hat f_i^2 x_{ij}^2 v_i^2]}$ and $\Gamma_{\tau 0jj} = \sqrt{\En[f_i^2 x_{ij}^2 v_i^2]}$, with probability at least $1-\Delta_n$ we have $$\max_{j\leq p}|\hat \Gamma_{\tau 0jj}-\Gamma_{\tau 0jj}|\leq \max_{j\leq p} \sqrt{\En[(\hat f_i-f_i)^2 x_{ij}^2 v_i^2]}\leq \delta_n^{1/2}$$ by Condition WL(iii). Further, Condition WL implies that $\Gamma_{\tau 0jj}$ is bounded away from zero and from above uniformly in $j=1,\ldots,p$ and $n$. Thus we have $\|\hat \Gamma^{-1}_{\tau 0}\Gamma_{\tau 0}\|_\infty \to_P 1$, so that $\|\hat \Gamma^{-1}_{\tau 0}\Gamma_{\tau 0}\|_\infty \leq \sqrt[4]{c'/c}$ with probability $1-\Delta_n$ for $n \geq n_0(\delta_n,c',c,\Gamma_{\tau 0})$.
By the triangle inequality
\begin{equation}\label{Eq:LEWfirst} \| \hat \Gamma^{-1}_{\tau 0} \En[ f_i x_i v_i ]\|_\infty \leq \| \hat \Gamma^{-1}_{\tau 0} \Gamma_{\tau 0}\|_\infty \|\Gamma_{\tau 0}^{-1}\En[ f_ix_i  v_i ]\|_\infty \end{equation}

Next we will apply Lemma \ref{Lemma:SNMD} which is based on self-normalized moderate deviation theory. Define $U_{ij} = f_ix_{ij}v_i - \Ep[f_ix_{ij}v_i]$ which is zero mean by construction and $\En[ f_i x_{ij} v_i ]= \En[U_{ij}]$ since $\barEp[f_ix_{ij}v_i]=0$. Moreover, we have $\min_{j\leq  p} \barEp[U_{ij}^2]\geq c$ by Condition WL(ii) and $\max_{j\leq p}\barEp[|U_{ij}|^3] \lesssim \max_{j\leq p}\barEp[|f_ix_{ij}v_i|^3] \leq C$  since $U_{ij}$ is demeaned and the last bound from by Condition WL(ii).  Using that  by Condition WL(iii), with probability $1-\Delta_n$ we have $\max_{j\leq p}|(\En-\barEp)[U_{ij}^2]|\leq \delta_n$ and by Condition WL(ii) $\min_{j\leq p} \barEp[U_{ij}^2] \geq c$, we have that $\sqrt{\En[U_{ij}^2]} \leq \sqrt[4]{c'/c}\sqrt{\En[f_i^2 x_{ij}^2 v_i^2]}$ with probability $1-\Delta_n$ for $n$ sufficiently large. Therefore,
$$\begin{array}{rl}
P( \lambda/n \geq 2c \|\hat  \Gamma^{-1}_{\tau 0}\En[ f_i x_i v_i ]\|_\infty ) & \geq P( \Phi^{-1}(1-\gamma/2p) \geq \frac{c}{c'}\sqrt[4]{\frac{c'}{c}} \sqrt{n}\| \Gamma^{-1}_{\tau 0}\En[ f_i x_i v_i ]\|_\infty ) -\Delta_n\\
& = P\left( \Phi^{-1}(1-\gamma/2p)>\frac{c}{c'}\sqrt[4]{\frac{c'}{c}}\max_{j\leq p} \left| \frac{\sqrt{n}\En[ f_i x_{ij} v_i ]}{\sqrt{\En[f_i^2 x_{ij}^2 v_i^2]}}\right| \right) \\
& \geq P\left( \Phi^{-1}(1-\gamma/2p)>\frac{c}{c'}\sqrt[2]{\frac{c'}{c}}\max_{j\leq p} \left| \frac{\sqrt{n}\En[ U_{ij}]}{\sqrt{\En[U_{ij}^2]}}\right| \right) -2\Delta_n\\
& \geq P\left( \Phi^{-1}(1-\gamma/2p)>\max_{j\leq p} \left| \frac{\sqrt{n}\En[ U_{ij}]}{\sqrt{\En[U_{ij}^2]}}\right| \right) -2\Delta_n\\
& \geq 1-2p\Phi(\Phi^{-1}(1-\gamma/2p)) ( 1 + o(1))-2\Delta_n \\
& \geq 1-\gamma\{ 1 + o(1)\} -2\Delta_n\end{array} $$
where the last relation by Condition WL.
{\bf $\square$}

{\bf Proof.}{\bf \ (Proof of Lemma \ref{Thm:RateEstimatedLasso})}
Let $\hat\delta = \hat\theta_\tau-\theta_\tau$. By definition of $\hat\theta_\tau$ we have
\begin{equation}\label{Eq:EstLassoV2}\begin{array}{rl}
 \En[\hat f_i^2(x_i'\hat\delta)^2] - 2\En[\hat f_i^2(d_i-x_i'\theta_\tau)x_i]'\hat\delta & = \En[\hat f_i^2(d_i-x_i'\hat\theta_\tau)^2] -\En[\hat f_i^2(d_i-x_i'\theta_\tau)^2] \\
& \leq \frac{\lambda}{n}\|\hat  \Gamma_\tau \theta_\tau\|_1 -\frac{\lambda}{n}\|\hat  \Gamma_\tau \hat \theta_\tau\|_1 \\
 & \leq \frac{\lambda}{n}\|\hat  \Gamma_\tau\hat\delta_{T_\mtau}\|_1-\frac{\lambda}{n}\|\hat  \Gamma_\tau \hat\delta_{T_\mtau^c}\|_1\\
& \leq \frac{\lambda}{n}u\|\hat  \Gamma_{\tau 0}\hat\delta_{T_\mtau}\|_1-\frac{\lambda}{n}\ell\|\hat  \Gamma_{\tau 0}\hat\delta_{T_\mtau^c}\|_1\end{array}\end{equation}
Therefore, using that $c_f^2 \geq \En[(\hat f_i^2-f_i^2)^2v_i^2/\{\hat f_i^2f_i^2\}]$ and $ c_r^2 \geq \En[\hat f^2 r_{\mtau i}^2]$, we have
{\small \begin{equation}\label{LassoNewEq}\begin{array}{rl}\En[\hat f_i^2(x_i'\hat\delta)^2] & \leq 2\En[(\hat f_i^2-f_i^2)v_ix_i/f_i]'\hat\delta + 2\En[\hat f_i^2r_{\mtau i}x_i]'\hat\delta+2(\hat  \Gamma^{-1}_0\En[f_iv_ix_i])'(\hat  \Gamma_{\tau 0}\hat\delta)\\
&+\frac{\lambda}{n}u\|\hat  \Gamma_{\tau 0}\hat\delta_{T_\mtau}\|_1-\frac{\lambda}{n}\ell\|\hat  \Gamma_{\tau 0}\hat\delta_{T_\mtau^c}\|_1\\
& \leq 2\{ c_f+ c_r\} \{\En[\hat f_i^2(x_i'\hat\delta)^2]\}^{1/2}+2\|\hat  \Gamma^{-1}_0\En[f_i^2(d_i-x_i'\theta_\tau)x_i]\|_\infty\|\hat  \Gamma_{\tau 0}\hat\delta\|_1\\
&+\frac{\lambda}{n}u\|\hat  \Gamma_{\tau 0}\hat\delta_{T_\mtau}\|_1-\frac{\lambda}{n}\ell\|\hat  \Gamma_{\tau 0}\hat\delta_{T_\mtau^c}\|_1\\
& \leq 2\{c_f+ c_r\} \{\En[\hat f_i^2(x_i'\hat\delta)^2]\}^{1/2}+\frac{\lambda}{cn}\|\hat  \Gamma_{\tau 0}\hat\delta\|_1+\frac{\lambda}{n}u\|\hat  \Gamma_{\tau 0}\hat\delta_{T_\mtau}\|_1-\frac{\lambda}{n}\ell\|\hat  \Gamma_{\tau 0}\hat\delta_{T_\mtau^c}\|_1\\
& \leq 2\{ c_f+ c_r\} \{\En[\hat f_i^2(x_i'\hat\delta)^2]\}^{1/2}+\frac{\lambda}{n}\left(u+\frac{1}{c}\right)\|\hat  \Gamma_{\tau 0}\hat\delta_{T_\mtau}\|_1-\frac{\lambda}{n}\left(\ell-\frac{1}{c}\right)\|\hat  \Gamma_{\tau 0}\hat\delta_{T_\mtau^c}\|_1\\
\end{array}\end{equation}}
Let $\tilde \cc = \frac{cu+1}{c\ell-1}\|\hat\Gamma_{\tau 0}\|_\infty\|\hat\Gamma_{\tau 0}^{-1}\|_\infty$. If $\hat \delta \not\in \Delta_{\tilde \cc}$ we have $ \left(u+\frac{1}{c}\right)\|\hat  \Gamma_{\tau 0}\hat\delta_{T_\mtau}\|_1\leq \left(\ell-\frac{1}{c}\right)\|\hat  \Gamma_{\tau 0}\hat\delta_{T_\mtau^c}\|_1$ so that
$$ \{\En[\hat f_i^2(x_i'\hat\delta)^2]\}^{1/2} \leq 2\{ c_f+ c_r\}.$$

Otherwise assume $\hat\delta \in \Delta_{\tilde \cc}$. In this case (\ref{LassoNewEq}) yields
{\small $$\begin{array}{rl}\En[\hat f_i^2(x_i'\hat\delta)^2]
& \leq 2\{ c_f+ c_r\} \{\En[\hat f_i^2(x_i'\hat\delta)^2]\}^{1/2}+\frac{\lambda}{n}\left(u+\frac{1}{c}\right)\|\hat  \Gamma_{\tau 0}\hat\delta_{T_\mtau}\|_1-\frac{\lambda}{n}\left(\ell-\frac{1}{c}\right)\|\hat  \Gamma_{\tau 0}\hat\delta_{T_\mtau^c}\|_1\\
& \leq 2\{c_f+ c_r\} \{\En[\hat f_i^2(x_i'\hat\delta)^2]\}^{1/2}+\frac{\lambda}{n}\left(u+\frac{1}{c}\right)\sqrt{s}\{\En[\hat f_i^2(x_i'\hat\delta)^2]\}^{1/2}/\hat\kappa_{\tilde \cc}\\
\end{array}$$}
which implies
$$  \{\En[\hat f_i^2(x_i'\hat\delta)^2]\}^{1/2} \leq 2 \{c_f+ c_r\} + \frac{\lambda\sqrt{s}}{n\hat\kappa_{\tilde \cc}}\left(u+\frac{1}{c}\right)$$

To establish the $\ell_1$-bound, first assume that $\hat\delta \in \Delta_{2\tilde \cc}$. In that case
$$ \|\hat\delta\|_1 \leq (1+2\tilde\cc)\|\hat\delta_{T_\mtau}\|_1 \leq \sqrt{s}\{\En[\hat f_i^2(x_i'\hat\delta)^2]\}^{1/2}/\hat \kappa_{2\tilde\cc} \leq  2 \frac{\sqrt{s}\{c_f+c_r\}}{\hat \kappa_{2\tilde\cc}} + \frac{\lambda s }{n\hat\kappa_{\tilde \cc}\hat \kappa_{2\tilde\cc}}\left(u+\frac{1}{c}\right).$$

Otherwise note that $\hat\delta \not\in \Delta_{2\tilde \cc}$ implies that $ \left(u+\frac{1}{c}\right)\|\hat  \Gamma_{\tau 0}\hat\delta_{T_\mtau}\|_1\leq \frac{1}{2}\cdot\left(\ell-\frac{1}{c}\right)\|\hat  \Gamma_{\tau 0}\hat\delta_{T_\mtau^c}\|_1$ so that (\ref{LassoNewEq}) gives
$$\frac{1}{2}\frac{\lambda}{n}\cdot\left(\ell-\frac{1}{c}\right)\|\hat  \Gamma_{\tau 0}\hat\delta_{T_\mtau^c}\|_1 \leq \{\En[\hat f_i^2(x_i'\hat\delta)^2]\}^{1/2} \left( 2\{c_f+c_r\}-\{\En[\hat f_i^2(x_i'\hat\delta)^2]\}^{1/2}\right) \leq \{c_f+c_r\}^2.$$
Therefore
$$\|\hat\delta\|_1 \leq \left(1+\frac{1}{2\tilde\cc}\right)\|\hat\delta_{T_\mtau^c}\|_1 \leq \left(1+\frac{1}{2\tilde\cc}\right)\|\hat\Gamma_{\tau 0}^{-1}\|_\infty\|\hat\Gamma_{\tau 0}\hat\delta_{T_\mtau^c}\|_1\leq \left(1+\frac{1}{2\tilde\cc}\right)\frac{2c\|\hat\Gamma_{\tau 0}^{-1}\|_\infty}{\ell c-1}\frac{n}{\lambda}\{c_f+c_r\}^2$$
{\bf $\square$}

{\bf Proof.}{\bf \ (Proof of Lemma \ref{Thm:EstLassoPostRate})}
Note that $\|\hat f\|_\infty^2$ and $\|\widehat \Gamma_{0}^{-1}\|_\infty$ are uniformly bounded with probability going to one. Under the assumption on the design, for $\mathcal{M}$ defined in Lemma \ref{Thm:Sparsity} we have that
$\min_{m \in \mathcal{M}}\semax{m\wedge n}$ is uniformly bounded. Thus by Lemma \ref{Thm:Sparsity} with probability $1-\gamma-o(1)$ we have
$$ \hat s_m \lesssim   \[\frac{n \{c_f+ c_r\}}{\lambda} + \sqrt{s}\]^2.$$

The bound then follows from  Lemma \ref{Thm:2StepMain}.
{\bf $\square$}

\subsection{Technical Results for Post-Lasso with Estimated Weights}

\begin{lemma}[Performance of the Post-Lasso]\label{Thm:2StepMain}
Under Conditions WL, let $\widehat T_\mtau$ denote the support selected by $\widehat \theta_\tau$, and $\widetilde \theta_\tau$ be
the Post-Lasso estimator based on $\widehat T_\mtau$. Then we have for $\hat s_\mtau = |\widehat T_\mtau|$, with probability $1-o(1)$
$$
\begin{array}{l}
\displaystyle  \| \hat f_i(x_i'\theta_\tau + r_{\mtau i} - x_i'\widetilde \theta_\tau)\|_{2,n} \lesssim \sqrt{\frac{\semax{\hat s_\mtau}}{\semin{\hat s_\mtau}}}\frac{c_f}{\min_{i\leq n} \hat f_i} +\frac{\sqrt{\hat s_\mtau }\sqrt{ \log p }}{\sqrt{n \ \semin{\hat s_\mtau}}\min_{i\leq n} \hat f_i}\\  + \min_{\supp(\theta)\subseteq\widehat T_\mtau}  \|\hat f_i(x_i'\theta_\tau + r_{\mtau i} - x_i'\theta)\|_{2,n}\\
\end{array}
$$
Moreover, if in addition $\lambda$ satisfies (\ref{Eq:reg}), and $\ell \widehat \Gamma_{\tau 0} \leq \widehat \Gamma_\tau \leq u \widehat \Gamma_{\tau 0}$ with $u \geq 1 \geq \ell > 1/c$ in the first stage for Lasso, then we have with probability $1-\gamma-o(1)$
$$\min_{\supp(\theta)\subseteq\widehat T_\mtau}  \|\hat f_i(x_i'\theta_\tau + r_{\mtau i}- x_i'\theta)\|_{2,n} \leq 3\{ c_f + c_r\}+  \(u + \frac{1}{c}\) \frac{\lambda \sqrt{s}}{n \kappa_{\tilde \cc}\min_{i\leq n}\hat f_i}+3\bar f C\sqrt{s/n}.$$

\end{lemma}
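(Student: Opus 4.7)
The plan is to exploit the first-order optimality of the weighted OLS projection that defines the Post-Lasso, and then decompose the resulting residual into a bias piece (the approximation error on the selected support) and a noise piece whose correlation with the selected regressors is controlled by sparse eigenvalues and self-normalized moderate deviations. Specifically, pick any $\theta^*$ with $\supp(\theta^*)\subseteq\widehat T_\mtau$ and set $\delta=\widetilde\theta_\tau-\theta^*$, which is also supported on $\widehat T_\mtau$. The FOC $\En[\hat f_i^{2}x_{ij}(d_i-x_i'\widetilde\theta_\tau)]=0$ for $j\in\widehat T_\mtau$ yields the key identity
$$\|\hat f_i x_i'\delta\|_{2,n}^{2}=\En[\hat f_i^{2}(x_i'\delta)(d_i-x_i'\theta^*)].$$

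Substituting $d_i-x_i'\theta^* = (m_{\tau i}-x_i'\theta^*)+v_i/f_i$ and applying Cauchy--Schwarz, the bias piece contributes $\|\hat f_i(m_{\tau i}-x_i'\theta^*)\|_{2,n}\,\|\hat f_i x_i'\delta\|_{2,n}$. I would split the noise piece as $\En[\hat f_i^{2}(x_i'\delta)v_i/f_i]=\En[f_i(x_i'\delta)v_i]+\En[(x_i'\delta)(\hat f_i^{2}-f_i^{2})v_i/f_i]$. The second sub-piece is bounded by $c_f\|x_i'\delta\|_{2,n}$ directly from the definition $c_f^{2}=\En[(\hat f_i^{2}-f_i^{2})^{2}v_i^{2}/f_i^{2}]$, and transferred to the weighted norm via $\|x_i'\delta\|_{2,n}\leq\sqrt{\semax{\hat s_\mtau}/\semin{\hat s_\mtau}}\,\|\hat f_i x_i'\delta\|_{2,n}/\min_{i}\hat f_i$, which holds because $\|\delta\|_{0}\leq\hat s_\mtau$. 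For the first sub-piece, since $\delta$ is supported on $\widehat T_\mtau$, write it as $\sum_{j\in\widehat T_\mtau}\delta_j\En[f_i x_{ij}v_i]$ and use self-normalized moderate deviations (cf.\ Lemma \ref{Lemma:SNMD}) to get $\max_{j}|\En[f_i x_{ij}v_i]|\lesssim\sqrt{\log p/n}$ with high probability under Condition WL; combining with $\|\delta\|\leq\|\hat f_i x_i'\delta\|_{2,n}/(\min_i\hat f_i\sqrt{\semin{\hat s_\mtau}})$ contributes $\sqrt{\hat s_\mtau\log p/(n\,\semin{\hat s_\mtau})}/\min_i\hat f_i$ times $\|\hat f_i x_i'\delta\|_{2,n}$. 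Dividing through by $\|\hat f_i x_i'\delta\|_{2,n}$, applying the triangle inequality $\|\hat f_i(m_{\tau i}-x_i'\widetilde\theta_\tau)\|_{2,n}\leq\|\hat f_i(m_{\tau i}-x_i'\theta^*)\|_{2,n}+\|\hat f_i x_i'\delta\|_{2,n}$, and minimizing over $\theta^*$ delivers the first displayed bound.

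For the second bound I would simply take $\theta^*=\hat\theta_\tau$, which is supported on $\widehat T_\mtau$ by construction, and bound
$$\|\hat f_i(m_{\tau i}-x_i'\hat\theta_\tau)\|_{2,n}\leq\|\hat f_i x_i'(\hat\theta_\tau-\theta_\tau)\|_{2,n}+\|\hat f_i r_{\mtau i}\|_{2,n}.$$
Under event (\ref{Eq:reg}) and the stated loadings, Lemma \ref{Thm:RateEstimatedLasso} directly bounds the first term by $2\{c_f+c_r\}+(u+1/c)\lambda\sqrt{s}/(n\hat\kappa_{\tilde\cc})$; converting the weighted restricted eigenvalue to the unweighted one via $\hat\kappa_{\tilde\cc}\geq(\min_i\hat f_i)\kappa_{\tilde\cc}$ produces the factor $1/(\min_i\hat f_i\,\kappa_{\tilde\cc})$ in the target form. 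For $\|\hat f_i r_{\mtau i}\|_{2,n}$ I would invoke both available bounds, namely $\|\hat f_i r_{\mtau i}\|_{2,n}\leq c_r$ and $\|\hat f_i r_{\mtau i}\|_{2,n}\leq\max_i\hat f_i\,\|r_{\mtau i}\|_{2,n}\leq\bar f C\sqrt{s/n}$ from Condition WL(i), which together account for the $3c_r$ and $3\bar f C\sqrt{s/n}$ terms in the stated bound.

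The main obstacle is keeping careful track of the discrepancy between $\hat f_i$ and $f_i$ without inflating the rate: the crucial observation is that this discrepancy appears in exactly the form captured by $c_f$ in Condition WL(iii), so Cauchy--Schwarz absorbs it cleanly into a single $c_f$-type term. The second delicate point is the round-trip between the weighted norm $\|\hat f_i x_i'\cdot\|_{2,n}$ and the unweighted norm $\|x_i'\cdot\|_{2,n}$ (and $\|\cdot\|_{2}$), which I control throughout via the sparse eigenvalue factors $\sqrt{\semax{\hat s_\mtau}/\semin{\hat s_\mtau}}$ and $\sqrt{\semin{\hat s_\mtau}}$ together with $\min_i\hat f_i$; everything else is a standard Post-OLS projection-type argument.
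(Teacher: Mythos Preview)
Your proposal is correct and follows essentially the same argument as the paper. The paper writes the proof in projection-matrix language, decomposing $\widehat F m_\tau - \widehat F X\widetilde\theta_\tau = (I-\widehat P_{\hat T_\mtau})\widehat F m_\tau - \widehat P_{\hat T_\mtau}\widehat F F^{-1}v$ and bounding the projected noise via $\|\widehat P_{\hat T_\mtau}\widehat F F^{-1}v\|\leq \|\widehat F^{-1}\|_\infty\semin{\hat s_\mtau}^{-1/2}\{\|X[\hat T_\mtau]'(\widehat F^2-F^2)F^{-1}v/\sqrt{n}\|+\sqrt{\hat s_\mtau}\|X'Fv/\sqrt{n}\|_\infty\}$; you obtain the identical decomposition and the same two noise sub-pieces by working scalarly from the first-order conditions of the weighted least-squares problem, and for the second display both you and the paper plug in the Lasso estimate $\hat\theta_\tau$ and invoke Lemma~\ref{Thm:RateEstimatedLasso}.
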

{\bf Proof.}{\bf \ (Proof of Lemma \ref{Thm:2StepMain})}
Let $F = \diag(f)$, $\widehat F = \diag(\hat f)$, $X=[ x_1;\ldots; x_n]'$, $m_\tau=X\theta_\tau+r_{\mtau}$, and for a set of indices $S \subset \{1,\ldots,p\}$ we define the projection matrix on the columns associated with the indices in $S$ as ${P}_{S} = FX[S](FX[S]'FX[S])^{-1} FX[S]'$ and $\widehat{P}_{S} = \widehat FX[S](X[S]'\widehat F'\widehat FX[S])^{-1} \widehat FX[S]'$. Since $f_id_i = f_im_{\tau i} + v_i$ we have that $\hat f_i d_i = \hat f_i m_{\tau i} + v_i \hat f_i/f_i$ and we have $$\widehat Fm_\tau - \widehat FX\widetilde\theta_\tau = ( I - \widehat{P}_{\hat T_\mtau})\widehat Fm_\tau - \widehat {P}_{\hat T_\mtau}\widehat F F^{-1} v$$ where $I$ is the identity operator. Therefore
\begin{equation}\label{eqPL}
\| \widehat Fm_\tau - \widehat FX\widetilde\theta_\tau \| \leq \|( I - \widehat{P}_{\hat T_\mtau})\widehat Fm_\tau \| +\|\widehat {P}_{\hat T_\mtau}\widehat F F^{-1} v\|.
\end{equation}
Since $\|\widehat FX[\hat T_\mtau]/\sqrt{n}( X[\hat T_\mtau]'\widehat F'\widehat FX[\hat T_\mtau]/n)^{-1} \| \leq \|\widehat F^{-1}\|_\infty\sqrt{1/\semin{\hat s_\mtau}}$,  the last term in (\ref{eqPL}) satisfies
$$\begin{array}{rl}
\|\widehat{P}_{\hat T_\mtau}\widehat F F^{-1}v \|  & \leq  \|\widehat F^{-1}\|_\infty \sqrt{1/\semin{\hat s_\mtau}}  \ \|X[\hat T_\mtau]'\widehat F^2F^{-1}v/\sqrt{n}\| \\
 & \leq \|\widehat F^{-1}\|_\infty \sqrt{1/\semin{\hat s_\mtau}} \left\{  \|X[\hat T_\mtau]'\{\widehat F^2-F^2\}F^{-1}v/\sqrt{n}\| +   \|X[\hat T_\mtau]'Fv/\sqrt{n}\| \right\} \\
 & \leq \|\widehat F^{-1}\|_\infty \sqrt{1/\semin{\hat s_\mtau}}  \left\{  \|X[\hat T_\mtau]'\{\widehat F^2-F^2\}F^{-1}v/\sqrt{n}\| +   \sqrt{\hat s_\mtau}\|X'Fv/\sqrt{n}\|_\infty \right\}.\end{array}$$
Condition WL(iii) implies that
$$ \|X[\hat T_\mtau]'\{\widehat F^2-F^2\}F^{-1}v/\sqrt{n}\| \leq \sup_{\|\alpha\|_0\leq \hat s_\mtau, \|\alpha\|\leq 1}|\alpha'X[\hat T_\mtau]'\{\widehat F^2-F^2\}F^{-1}v/\sqrt{n}| \leq \sqrt{n}\sqrt{\semax{\hat s_\mtau}} c_f.$$
Under Condition WL(iv), by Lemma \ref{Thm:ChoiceLambda} we have with probability $1-o(1)$
$$  \|X'Fv/\sqrt{n}\|_\infty \lesssim_P \sqrt{\log (pn)} \max_{1\leq j\leq p} \sqrt{\En[f_i^2x_{ij}^2v_{i}^2]}.$$
Moreover,  Condition WL(iv) also implies $\max_{1\leq j\leq p} \sqrt{\En[f_i^2x_{ij}^2v_{i}^2]} \lesssim 1$ with probability $1-o(1)$ since $\max_{1\leq j\leq p} |(\En-\barEp)[f_i^2x_{ij}^2v_{i}^2]|\leq \delta_n$ with probability $1-\Delta_n$, and $\max_{1\leq j\leq p}\barEp[f_i^2x_{ij}^2v_{i}^2] \leq \max_{1\leq j\leq p}\{\barEp[f_i^3x_{ij}^3v_{i}^3]\}^{2/3} \lesssim 1$.

The last statement follows from noting that the Lasso solution provides an upper bound to the approximation of the best model based on $\widehat T_\mtau$, and the application of Lemma \ref{Thm:RateEstimatedLasso}.{\bf $\square$}

\begin{lemma}[Empirical pre-sparsity for Lasso]\label{Lemma:SparsityLASSO}
Let $\widehat T_\mtau$ denote the support selected by the Lasso estimator, $\hat s_\mtau = |\widehat T_\mtau|$, assume $\lambda/n \geq c\|\En[\widehat \Gamma_{\tau 0}^{-1}f_ix_iv_i]\|_{\infty}$, and $\ell \widehat \Gamma_{\tau 0} \leq \widehat \Gamma_\tau \leq u \widehat \Gamma_{\tau 0}$ with $u \geq 1 \geq \ell > 1/c$. Then, for $c_0=(uc+1)/(\ell c - 1)$ and $\tilde \cc = (uc+1)/(\ell c - 1)\|\hat\Gamma_{\tau 0}\|_\infty\|\hat\Gamma_{\tau 0}^{-1}\|_\infty$ we have
$$ \sqrt{\hat s_\mtau} \leq 2\sqrt{\semax{\hat s_\mtau}}(1+3\|\hat f\|_\infty)\|\widehat \Gamma_{0}^{-1}\|_\infty c_0\[\frac{n \{ c_f+c_r\}}{\lambda} + \frac{\sqrt{s}\|\hat \Gamma_{\tau 0}\|_\infty}{\kappa_{\tilde \cc}\min_{i\leq n}\widehat f_i}\].$$

\end{lemma}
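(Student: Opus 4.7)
I would start from the first-order conditions of the Lasso program in (\ref{EstLasso}): for each $j \in \widehat T_\mtau$, $|\En[\hat f_i^2(d_i-x_i'\hat\theta_\tau)x_{ij}]| = \tfrac{\lambda}{2n}\hat\Gamma_{\tau jj}$. Substituting the identity $d_i = x_i'\theta_\tau + r_{\mtau i} + v_i/f_i$ coming from (\ref{Def:ModelEstLasso}) and writing $\hat\delta = \hat\theta_\tau-\theta_\tau$, the right-hand side decomposes into four natural pieces $A_j = \En[\hat f_i^2 r_{\mtau i}x_{ij}]$, $B_j = \En[(\hat f_i^2-f_i^2)v_ix_{ij}/f_i]$, $C_j = \En[f_iv_ix_{ij}]$, and $-D_j$ with $D_j = \En[\hat f_i^2 x_i'\hat\delta\, x_{ij}]$. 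The ``pure noise'' piece $C_j$ is controlled directly by the assumed regularization event: $|C_j| \leq \hat\Gamma_{\tau 0jj}\lambda/(nc) \leq \hat\Gamma_{\tau jj}\lambda/(\ell nc)$. Moving $|C_j|$ to the left hand side, I obtain for each $j \in \widehat T_\mtau$ an inequality of the form $\tfrac{\lambda}{n}\hat\Gamma_{\tau jj}\cdot \mathrm{const}(\ell,c) \leq |A_j|+|B_j|+|D_j|$, with the constant positive thanks to $\ell c > 1$.

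Next I would square both sides, sum over $j \in \widehat T_\mtau$, take square roots, and apply Minkowski. The LHS is bounded below by $\sqrt{\hat s_\mtau}\cdot \tfrac{\lambda}{n}\cdot \tfrac{\ell}{\|\hat\Gamma_{\tau 0}^{-1}\|_\infty}$ times the same positive constant, using $\hat\Gamma_{\tau jj} \geq \ell \hat\Gamma_{\tau 0jj} \geq \ell/\|\hat\Gamma_{\tau 0}^{-1}\|_\infty$. For each of the three surviving sums on the right, I would apply the generic sparse operator-norm bound $\sum_{j \in \widehat T_\mtau}(\En[u_i w_{ij}])^2 \leq \En[u_i^2]\cdot \semax{\hat s_\mtau}$ (after reweightings that peel off the factors of $\|\hat f\|_\infty$ carried by the leading $\hat f_i^2$ in $A_j$ and $D_j$), which gives $\sqrt{\sum A_j^2} \leq \|\hat f\|_\infty \hat c_r \sqrt{\semax{\hat s_\mtau}}$, $\sqrt{\sum B_j^2} \leq \hat c_f\sqrt{\semax{\hat s_\mtau}}$, and $\sqrt{\sum D_j^2} \leq \|\hat f\|_\infty \sqrt{\semax{\hat s_\mtau}}\cdot \|\hat f_i x_i'\hat\delta\|_{2,n}$. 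The prediction-norm factor $\|\hat f_i x_i'\hat\delta\|_{2,n}$ is then controlled by the already-established Lemma \ref{Thm:RateEstimatedLasso}, yielding a bound proportional to $(\hat c_f+\hat c_r) + \tfrac{\lambda\sqrt{s}}{n\hat\kappa_{\tilde\cc}}(u+1/c)$.

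Finally, I would assemble and isolate $\sqrt{\hat s_\mtau}$. The three contributions $A_j, B_j, D_j$ collapse into an $(\hat c_f+\hat c_r)$-term whose combined coefficient is bounded by $(1+3\|\hat f\|_\infty)$ (the ``$1$'' coming from $B_j$, the ``$3\|\hat f\|_\infty$'' from summing one copy of $\|\hat f\|_\infty$ out of $A_j$ and two copies out of $D_j$ via the Lemma \ref{Thm:RateEstimatedLasso} bound $2(\hat c_f+\hat c_r)$), plus a $\sqrt{s}$-term in which $u+1/c = (uc+1)/c$ combines with the $(\ell c - 1)$ coming from the absorption of $C_j$ to produce exactly $c_0 = (uc+1)/(\ell c-1)$. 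The last routine step is to pass from the weighted $\hat\kappa_{\tilde\cc}$ to the unweighted $\kappa_{\tilde\cc}$ via $\hat\kappa_{\tilde\cc} \geq \kappa_{\tilde\cc}\cdot\min_{i\leq n}\hat f_i$, producing the $\min_{i\leq n}\hat f_i$ in the denominator of the stated bound. I expect the main obstacle to be bookkeeping: carefully packaging the several multiplicative constants so that they reassemble into the precise form $2\sqrt{\semax{\hat s_\mtau}}(1+3\|\hat f\|_\infty)\|\hat\Gamma_{\tau 0}^{-1}\|_\infty c_0$ rather than a looser surrogate. Conceptually, no new probabilistic machinery beyond the KKT conditions and the prediction-norm bound from Lemma \ref{Thm:RateEstimatedLasso} is required; the novelty relative to classical pre-sparsity arguments is solely the presence of the weight-perturbation term $B_j$ and the sandwich $\ell \hat\Gamma_{\tau 0}\leq \hat\Gamma_\tau \leq u\hat\Gamma_{\tau 0}$.
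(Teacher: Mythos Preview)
Your proposal is correct and follows essentially the same route as the paper's own proof: both start from the KKT conditions of the weighted Lasso, apply the identical four-term decomposition $\hat f_i^2(d_i-x_i'\hat\theta_\tau)=f_iv_i+(\hat f_i^2-f_i^2)f_i^{-1}v_i+\hat f_i^2 r_{\mtau i}-\hat f_i^2 x_i'\hat\delta$, absorb the pure-noise piece via the regularization event, bound the remaining three pieces by $\sqrt{\semax{\hat s_\mtau}}$ times $c_f$, $\|\hat f\|_\infty c_r$, and $\|\hat f\|_\infty\|\hat f_i x_i'\hat\delta\|_{2,n}$ respectively, and plug in the prediction-norm bound from Lemma~\ref{Thm:RateEstimatedLasso}. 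The only cosmetic difference is that you absorb the noise term componentwise before aggregating, whereas the paper first forms the $\ell_2$ norm of the $\widehat\Gamma^{-1}$-normalized KKT vector (so that the left side is exactly $\sqrt{\hat s_\mtau}\,\lambda$) and only then subtracts the $\sqrt{\hat s_\mtau}\,\lambda/(c\ell)$ contribution of the noise; this is the slightly cleaner order because it avoids checking positivity of $(1/2-1/(\ell c))$ componentwise, but the two organizations are otherwise equivalent and lead to the same constants you identify.
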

{\bf Proof.}{\bf \ (Proof of Lemma \ref{Lemma:SparsityLASSO})} Let $\widehat F = \diag(\hat f)$, $R_\mtau = (r_{\mtau 1},\ldots, r_{\mtau n})'$, and $X=[ x_1;\ldots; x_n]'$. We have from the optimality conditions that the Lasso estimator $\widehat \theta_\tau $ satisfies  $$2\En[ \widehat \Gamma_{j}^{-1}\hat f_{i}^2x_i(d_i-x_i'\hat\theta_\tau)] = \sign(\hat\theta_{\tau j})\lambda/n \ \text{ for each } \ j \in \widehat T_\mtau.  $$

Therefore, noting that $\|\widehat \Gamma^{-1}\widehat \Gamma^{0}\|_\infty \leq 1/\ell$, we have
{\small \begin{eqnarray*}
   &  &\sqrt{\hat s_\mtau} \lambda    =   2\| (\widehat \Gamma^{-1} X'\widehat F^2(D - X\hat \theta_\tau))_{\widehat T_\mtau} \| \\
            &  & \leq   2\| (\widehat \Gamma^{-1}X'FV)_{\widehat T_\mtau} \| + 2\| (\widehat \Gamma^{-1}X'(\widehat F^2-F^2)F^{-1}V )_{\widehat T_\mtau} \| + 2\| (\widehat \Gamma^{-1}X'\widehat F^2R_\mtau)_{\widehat T_\mtau} \| \\
            & &+ 2\| (\widehat \Gamma^{-1}X'\widehat F^2X(\theta_\tau-\hat \theta_\tau))_{\widehat T_\mtau} \| \\
            & &  \leq  \sqrt{\hat s_\mtau}\ \|\widehat \Gamma^{-1}\widehat \Gamma_{0}\|_\infty\|\widehat \Gamma_{\tau 0}^{-1}X' F'V\|_{\infty} +  2n\sqrt{\semax{\hat s_\mtau}}\| \widehat \Gamma^{-1}\|_\infty\{c_f +\|\hat F\|_\infty c_r\}+ \\
             & & 2n \sqrt{ \semax{\hat s_\mtau}} \|\widehat F\|_\infty\|\widehat \Gamma^{-1}\|_\infty\|\hat f_i x_i'(\hat \theta_\tau - \theta_\tau) \|_{2,n},\\
            & & \leq \sqrt{\hat s_\mtau}\ (1/\ell) \ n\|\widehat \Gamma_{\tau 0}^{-1}X'FV\|_{\infty} + 2n \sqrt{ \semax{\hat s_\mtau}} \frac{\|\widehat \Gamma^{-1}_0\|_\infty}{\ell}(c_f + \|\widehat F\|_\infty c_r+\|\widehat F\|_\infty\|\hat f_ix_i'(\hat \theta_\tau- \theta_\tau) \|_{2,n}),
\end{eqnarray*}
}\!where we used that
$$\begin{array}{rcl}
&& \| (X'\widehat F^2(\theta_\tau-\hat \theta_\tau))_{\widehat T_\mtau} \| \\
&& \leq \sup_{\|\delta\|_0\leq \hat s_\mtau, \|\delta\|\leq 1}|
\delta' X'\widehat F^2 X(\theta_\tau-\hat \theta_\tau)| \leq
\sup_{\|\delta\|_0\leq \hat s_\mtau, \|\delta\|\leq 1}\| \delta'X'\widehat F'\|\|\widehat F X(\theta_\tau-\hat \theta_\tau)\| \\
&& \leq  \sup_{\|\delta\|_0\leq \hat
s_\mtau, \|\delta\|\leq 1}\{ \delta'X'\widehat F^2 X\delta\}^{1/2}\|\widehat F X(\theta_\tau-\hat \theta_\tau)\| \leq
n\sqrt{\semax{\hat s_\mtau}}\|\widehat f_i\|_\infty\|\widehat f_i x_i'(\theta_\tau-\hat \theta_\tau)\|_{2,n}, \\
&& \| (X'(\widehat F^2-F^2)F^{-1}V)_{\widehat T_\mtau} \|  \leq \sup_{\|\delta\|_0\leq \hat s_\mtau, \|\delta\|\leq 1}|\delta'X'(\widehat F^2-F^2)F^{-1}V|\\
&& \leq \sup_{\|\delta\|_0\leq \hat s_\mtau, \|\delta\|\leq 1}\|X\delta\| \ \|(\widehat F^2-F^2)F^{-1}V\| \leq n\sqrt{\semax{\hat s_\mtau}} c_f\\
\end{array}$$
Since $\lambda/c \geq \|\widehat \Gamma_{\tau 0}^{-1}X' FV\|_{\infty}$, and by Lemma \ref{Thm:RateEstimatedLasso}, $\|\widehat f_ix_i'(\widehat\theta_\tau-\theta_\tau)\|_{2,n} \leq 2\{c_f + c_r\}+\(u + \frac{1}{c}\) \frac{\lambda \sqrt{s}\|\hat \Gamma_{\tau 0}\|_\infty}{n \kappa_{\tilde \cc}\min_{i\leq n}\widehat f_i}$ we have
$$ \sqrt{\hat s_\mtau} \leq \frac{2\sqrt{\semax{\hat s_\mtau}}\frac{\|\widehat \Gamma_{0}^{-1}\|_\infty}{\ell}\[\frac{n  c_f}{\lambda}(1+2\|\hat F\|_\infty) + \frac{n c_r}{\lambda}3\|\hat F\|_\infty +\|\hat F\|_\infty\(u+\frac{1}{c}\)\frac{\sqrt{s}\|\hat \Gamma_{\tau 0}\|_\infty}{\kappa_{\tilde \cc}\min_{i\leq n}\widehat f_i}\]}{\(1-\frac{1}{c\ell}\)}.$$

The result follows by noting that $(u+[1/c])/(1-1/[\ell c]) = c_0 \ell$ by definition of $c_0$.
{\bf $\square$}

\begin{lemma}[Sub-linearity of maximal sparse eigenvalues]
\label{Lemma:SparseEigenvalueIMP}Let $M$ be a semi-definite positive matrix. For any integer $k \geq 0$ and constant $\ell \geq 1$ we have $ \semax{\ceil{\ell k}}(M) \leq  \lceil \ell \rceil \semax{k}(M).$
\end{lemma}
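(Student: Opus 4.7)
The plan is to reduce the maximal sparse eigenvalue over supports of size $\lceil \ell k \rceil$ to a sum of $\lceil \ell \rceil$ maximal sparse eigenvalues over supports of size $k$, via a partition argument and the triangle inequality in the seminorm induced by $M$.

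First I would fix an arbitrary $\delta$ with $1 \leq \|\delta\|_0 \leq \lceil \ell k \rceil$ and show $\delta' M \delta \leq \lceil \ell \rceil \, \semax{k}(M) \, \|\delta\|^2$; the conclusion then follows from the definition of $\semax{\cdot}(M)$ in \eqref{Def:RSE1}. To do this, I would partition $\supp(\delta)$ into $\lceil \ell \rceil$ pairwise disjoint subsets $S_1, \dots, S_{\lceil \ell \rceil}$, each of cardinality at most $k$, which is possible because $\lceil \ell k \rceil \leq \lceil \ell \rceil \, k$. Writing $\delta_j = \delta_{S_j}$ (the restriction of $\delta$ to $S_j$ as in the paper's notation), we have $\delta = \sum_{j=1}^{\lceil \ell \rceil} \delta_j$ with $\|\delta_j\|_0 \leq k$ and $\sum_{j=1}^{\lceil \ell \rceil} \|\delta_j\|^2 = \|\delta\|^2$ by orthogonality of the pieces.

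Next, letting $\|x\|_M := (x'Mx)^{1/2}$, which is a seminorm because $M$ is positive semidefinite, the triangle inequality gives
\begin{equation*}
\|\delta\|_M \; \leq \; \sum_{j=1}^{\lceil \ell \rceil} \|\delta_j\|_M.
\end{equation*}
Squaring and applying the Cauchy--Schwarz inequality on the right-hand side yields
\begin{equation*}
\delta' M \delta \; \leq \; \lceil \ell \rceil \sum_{j=1}^{\lceil \ell \rceil} \delta_j' M \delta_j \; \leq \; \lceil \ell \rceil \, \semax{k}(M) \sum_{j=1}^{\lceil \ell \rceil} \|\delta_j\|^2 \; = \; \lceil \ell \rceil \, \semax{k}(M) \, \|\delta\|^2,
\end{equation*}
where the middle inequality uses $\|\delta_j\|_0 \leq k$ together with the definition of $\semax{k}(M)$. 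Taking the supremum over $\delta$ with $1 \leq \|\delta\|_0 \leq \lceil \ell k \rceil$ delivers the claim.

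There is no serious obstacle here: the argument is purely algebraic and requires only that $M \succeq 0$ so that $\|\cdot\|_M$ is a seminorm. The only point to be a little careful about is the combinatorial partition step, namely verifying $\lceil \ell k \rceil \leq \lceil \ell \rceil \, k$ for integer $k \geq 0$ and $\ell \geq 1$, which is immediate from $\ell k \leq \lceil \ell \rceil \, k$ and the fact that the right-hand side is an integer. The boundary case $k = 0$ is trivial since both sides of the asserted inequality equal $0$ under the convention $\semax{0}(M) = 0$.
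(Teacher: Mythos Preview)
Your argument is correct and is the standard way to establish this sub-linearity property. The paper itself states Lemma~\ref{Lemma:SparseEigenvalueIMP} without proof, so there is nothing to compare against; your partition-plus-triangle-inequality route is precisely the argument one would expect and it goes through cleanly.
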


\begin{lemma}[Sparsity for Estimated Lasso under data-driven penalty]\label{Thm:Sparsity}
Consider the Lasso estimator $\widehat \theta_\tau$, let $\widehat s_\mtau = |\widehat T_\mtau|$, and assume that $\lambda/n \geq c\|\En[\widehat \Gamma_{\tau 0}^{-1}f_ix_iv_i]\|_{\infty}$. Consider the set $$\mathcal{M}=\left\{ m \in \mathbb{N}:
m > 8\semax{m}(1+3\|\hat f\|_\infty)^2\|\widehat \Gamma_{0}^{-1}\|_\infty^2 c_0^2\[\frac{n \{ c_f+ c_r\}}{\lambda} + \frac{\sqrt{s}\|\hat \Gamma_{\tau 0}\|_\infty}{\kappa_{\tilde \cc}\min_{i\leq n}\widehat f_i}\]^2 \right\}.$$ Then,
$$ \hat s_\mtau \leq  4 \left( \min_{m \in \mathcal{M}}\semax{m}\right)(1+3\|\hat f\|_\infty)^2\|\widehat \Gamma_{0}^{-1}\|_\infty^2 c_0^2\[\frac{n \{c_f+ c_r\}}{\lambda} + \frac{\sqrt{s}\|\hat \Gamma_{\tau 0}\|_\infty}{\kappa_{\tilde \cc}\min_{i\leq n}\widehat f_i}\]^2 .$$
\end{lemma}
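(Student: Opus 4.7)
The plan is to combine the empirical pre-sparsity bound of Lemma \ref{Lemma:SparsityLASSO} with the sub-linearity of maximal sparse eigenvalues (Lemma \ref{Lemma:SparseEigenvalueIMP}) to turn the implicit bound $\hat s_\mtau \leq 4\,\semax{\hat s_\mtau}\,L^2$ into an explicit one, where I abbreviate
$$L := (1+3\|\hat f\|_\infty)\,\|\widehat \Gamma_{0}^{-1}\|_\infty\,c_0\Big[\tfrac{n\{c_f+c_r\}}{\lambda} + \tfrac{\sqrt{s}\,\|\hat\Gamma_{\tau 0}\|_\infty}{\kappa_{\tilde\cc}\min_{i\leq n}\hat f_i}\Big].$$
With this shorthand, squaring the conclusion of Lemma \ref{Lemma:SparsityLASSO} gives $\hat s_\mtau \leq 4\,\semax{\hat s_\mtau}\,L^2$, the set $\mathcal{M}$ reads $\{m\in\mathbb{N}: m > 8\,\semax{m}\,L^2\}$, and the target bound becomes $\hat s_\mtau \leq 4\,(\min_{m\in \mathcal{M}}\semax{m})\,L^2$.

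I would then argue by contradiction. Let $m^\ast \in \mathcal{M}$ attain the minimum of $\semax{m}$ over $\mathcal{M}$, and suppose toward a contradiction that $\hat s_\mtau > 4\,\semax{m^\ast}\,L^2$. The argument splits into two cases according to whether $\hat s_\mtau \leq m^\ast$ or $\hat s_\mtau > m^\ast$. In the first case, monotonicity of $\semax{\cdot}$ in its index gives $\semax{\hat s_\mtau}\leq \semax{m^\ast}$, so the pre-sparsity bound yields $\hat s_\mtau \leq 4\,\semax{m^\ast}\,L^2$, directly contradicting the hypothesis.

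In the second case, I would invoke Lemma \ref{Lemma:SparseEigenvalueIMP} with $k=m^\ast$ and $\ell=\hat s_\mtau/m^\ast > 1$, yielding $\semax{\hat s_\mtau} \leq \lceil \hat s_\mtau/m^\ast\rceil \,\semax{m^\ast} \leq 2\,(\hat s_\mtau/m^\ast)\,\semax{m^\ast}$, since $\lceil x\rceil \leq 2x$ whenever $x\geq 1$. Substituting into the pre-sparsity bound gives $\hat s_\mtau \leq 4\,\semax{\hat s_\mtau}\,L^2 \leq 8\,\semax{m^\ast}\,L^2\,\hat s_\mtau/m^\ast$, i.e.\ $m^\ast \leq 8\,\semax{m^\ast}\,L^2$, contradicting $m^\ast \in \mathcal{M}$. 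Both cases failing, we conclude $\hat s_\mtau \leq 4\,\semax{m^\ast}\,L^2$, which is exactly the claimed inequality.

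Conceptually the argument is standard for Lasso-type sparsity results, so I do not anticipate a serious obstacle; the only point that requires care is the bookkeeping of constants. The factor $8$ (rather than $4$) in the definition of $\mathcal{M}$ is precisely what is needed to absorb the factor-of-two loss introduced by the ceiling $\lceil \hat s_\mtau/m^\ast \rceil \leq 2\,\hat s_\mtau/m^\ast$ in the sub-linearity step. If one instead defined $\mathcal{M}$ via $m > 4\,\semax{m}\,L^2$, the contradiction in the second case would collapse, so the gap between the constants $4$ and $8$ is not negotiable.
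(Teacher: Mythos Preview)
Your proof is correct and follows essentially the same approach as the paper: both combine the pre-sparsity bound of Lemma~\ref{Lemma:SparsityLASSO} with the sub-linearity Lemma~\ref{Lemma:SparseEigenvalueIMP} and the inequality $\lceil x\rceil\le 2x$ for $x\ge 1$, matching the constants exactly. The only organizational difference is that the paper first shows $\hat s_\mtau\le M$ for any $M\in\mathcal{M}$ (your Case~2) and then re-applies the pre-sparsity bound using monotonicity of $\semax{\cdot}$ (your Case~1), whereas you fold both into a single contradiction; also, since the argument works for any $m^\ast\in\mathcal{M}$, you need not assume the minimum is attained---just conclude for arbitrary $m^\ast$ and then minimize.
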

{\bf Proof.}{\bf \ (Proof of Lemma \ref{Thm:Sparsity})}
Let $L_n = 2(1+3\|\hat f\|_\infty)\|\widehat \Gamma_{0}^{-1}\|_\infty c_0\[\frac{n \{c_f+c_r\}}{\lambda} + \frac{\sqrt{s}\|\hat \Gamma_{\tau 0}\|_\infty}{\kappa_{\tilde \cc}\min_{i\leq n}\widehat f_i}\].$
Rewriting the conclusion in Lemma \ref{Lemma:SparsityLASSO} we have
 \begin{equation}\label{Eq:Sparsity}\hat s_\mtau \leq \semax{\hat s_\mtau}L_n^2.\end{equation}
Consider any $M \in \mathcal{M}$, and suppose $\widehat s_\mtau > M$. Therefore by the sublinearity of the maximum sparse eigenvalue (see Lemma \ref{Lemma:SparseEigenvalueIMP})
$$ \hat s_\mtau  \leq  \ceil{\frac{\hat s_\mtau}{M}}\semax{M}L_n^2.$$ Thus, since $\ceil{k}\leq 2k$ for any $k\geq 1$  we have
$$ M \leq   2\semax{M}L_n^2$$
which violates the condition that $M \in \mathcal{M}$. Therefore, we have $\widehat s_\mtau \leq M$.

In turn, applying (\ref{Eq:Sparsity}) once more with $\widehat s_\mtau \leq M$ we obtain
 $$ \hat s_\mtau \leq   \semax{M}L_n^2.$$

The result follows by minimizing the bound over $M \in \mathcal{M}$.
{\bf $\square$}

\section{Proofs for Section \ref{Sec:EstIVQR} of Supplementary Material}

In this section we denote the nuisance parameters as $\tilde h = (\tilde g, \tilde \z)$, where $\tilde g$ is a function of variable $z\in \mathcal{Z}$, and $\tilde \z$ is a function on $(d,z)\mapsto \tilde \z(d,z)$. We define the score with $(\tilde \alpha,\tilde h)$ as
$$\psi_{\tilde \alpha,\tilde h}(y_i,d_i,z_i) = (\tau - 1\{y_i \leq \tilde g(z_i)+d_i\tilde \alpha\})\tilde \z(d_i,x_i).$$
For notational convenience we write $\tilde \z_i=\tilde \z(d_i,z_i)$ and $\tilde g_i = \tilde g(z_i)$, $h_0 = (g_{\tau},\z_0)$ and $\hat h = (\hat g,\hat \z)$.
 For a fixed $\tilde \alpha\in\RR$, $\tilde g:\mathcal{Z}\to \RR$, and $\tilde \z:\mathcal{D}\times\mathcal{Z} \to \RR$ we define
$$\left. \Gamma(\tilde\alpha,\tilde h) := \barEp[ \psi_{\alpha, h}(y_i,d_i,z_i)]\right|_{\alpha = \tilde \alpha, h = \tilde h} $$
The partial derivative of $\Gamma$ with respect to $\alpha$ at $(\tilde\alpha,\tilde h)$ is denoted by $\Gamma_\alpha(\tilde \alpha,\tilde h)$ and the directional derivative with respect to $[\hat h-h_0]$ at $(\tilde\alpha,\tilde h)$ is denote as
$$ \Gamma_h(\tilde\alpha,\tilde h)[\hat h- h_0] = \lim_{t\to 0} \frac{\Gamma(\tilde\alpha,\tilde h+t[\hat h- h_0])-\Gamma(\tilde\alpha,\tilde h)}{t}.$$

{\bf Proof.}{\bf \ (Proof of Lemma \ref{Thm:EstIVQR})}
The asymptotic normality results is established in Steps 1-4 which assume Condition IQR(i-iii). The additional results (derived on Steps 5 and 6) also assumed Condition IQR(iv).

\noindent Step 1. {\rm (Normality result)} We have the following identity
{\small \begin{equation}\label{Def:IdentityHelp}
\begin{array}{rl}
\En[\psi_{\check\alpha_\tau,\hat h}(y_i,d_i,z_i)] & = \En[\psi_{\alpha_\tau,h_0}(y_i,d_i,z_i)]+\En[\psi_{\check\alpha_\tau,\hat h}(y_i,d_i,z_i)-\psi_{\alpha_\tau, h_0}(y_i,d_i,z_i)]\\
 &=  \En[\psi_{\alpha_\tau,h_0}(y_i,d_i,z_i)]+\underbrace{\Gamma(\check\alpha_\tau,\hat h)}_{(I)}\\
 & +\underbrace{n^{-1/2}\Gn(\psi_{\check\alpha_\tau,\hat h}-\psi_{\check\alpha_\tau,h_0})}_{(II)} +\underbrace{n^{-1/2}\Gn(\psi_{\check\alpha_\tau,h_0}-\psi_{\alpha_\tau,h_0})}_{(III)}\\
\end{array}
\end{equation}}

\noindent By the second relation in (\ref{Eq:HLhatalpha}), Condition IQR(iii), the left hand side of the display above satisfies we have $|\En[\psi_{\check\alpha_\tau,\hat h}(y_i,d_i,z_i)] |\lesssim \delta_n n^{-1/2}$  with probability at least $1-\Delta_n$. Since $\hat h \in \overline{\mathcal{F}}$ with probability at least $1-\Delta_n$ by Condition IQR(iii), with the same probability we have $|(II)| \lesssim \delta_n n^{-1/2}.$

We now proceed to bound term $(III)$. By Condition IQR(iii) we have with probability at least $1-\Delta_n$ that $|\check \alpha_\tau - \alpha_\tau|\leq \delta_n$. Observe that
\begin{align*}
(\psi_{\alpha,h_0}-\psi_{\alpha_\tau,h_0})(y_{i},d_{i},z_{i}) &=( 1\{y_{i} \leq g_{\tau i} + d_{i} \alpha_\tau\} - 1\{y_i\leq g_{\tau_i} +d_i \alpha \}) \z_{0i} \\
&=(1\{\epsilon_{i} \leq 0\} - 1\{\epsilon_{i} \leq d_{i}(\alpha - \alpha_\tau)\} ) \z_{0i},
\end{align*}
so that $| (\psi_{\alpha,h_{0}}-\psi_{\alpha_\tau,h_{0}})(y_{i},d_{i},z_{i}) | \leq 1\{| \epsilon_{i} | \leq \delta_{n}|d_{i}|\}| \z_{0i} |$ whenever $| \alpha - \alpha_\tau | \leq \delta_{n}$. Since the class of functions $\{ (y,d,z) \mapsto (\psi_{\alpha,h_0}-\psi_{\alpha_\tau,h_0})(y,d,z) : | \alpha - \alpha_\tau | \leq \delta_{n} \}$ is a VC subgraph class with VC index bounded by some constant independent of $n$,
using (a version of) Theorem 2.14.1 in \cite{vdV-W}, we have
\begin{equation*}
\sup_{| \alpha - \alpha_\tau| \leq \delta_{n}}|  \Gn (\psi_{\alpha,h_0}-\psi_{\alpha_\tau,h_0}) | \lesssim_{P} (\barEp [ 1\{| \epsilon_{i} | \leq \delta_{n}|d_{i}|\} \z_{0i}^{2} ])^{1/2}  \lesssim_{P} \delta_{n}^{1/2}.
\end{equation*}
This implies that $| III | \lesssim \delta_{n}^{1/3} n^{-1/2}$ with probability $1-o(1)$.

Therefore we have $ 0 = \En[\psi_{\alpha_\tau,h_0}(y_i,d_i,z_i)] + (II) + O_P(\delta_n^{1/2}n^{-1/2}) + O_P(\delta_n)|\check\alpha_\tau-\alpha_\tau|.$ Step 2 below establishes that $(II)= -\barEp[f_id_i\z_{0i}](\check\alpha_\tau-\alpha_\tau)| +O_P(\delta_n n^{-1/2} + \delta_n|\check\alpha_\tau-\alpha_\tau|).$ Combining these relations we have
\begin{equation}\label{FinalIV} \barEp[f_id_i\z_{0i}](\check\alpha_\tau-\alpha_\tau) = \En[\psi_{\alpha_\tau,h_0}(y_i,d_i,z_i)] + O_P(\delta_n^{1/2}n^{-1/2}) + O_P(\delta_n)|\check\alpha_\tau-\alpha_\tau|.\end{equation}
Note that $\mathbb{U}_n(\tau)= \{\barEp[\psi_{\alpha_\tau,h_0}^2(y_i,d_i,z_i)]\}^{-1/2}\sqrt{n}\En[\psi_{\alpha_\tau,h_0}(y_i,d_i,z_i)]$ and $\barEp[\psi_{\alpha_\tau,h_0}^2(y_i,d_i,z_i)]=\tau(1-\tau)\barEp[\z_{0i}^2]$ so that the first representation result follows from (\ref{FinalIV}).
\noindent Since $\barEp[\psi_{\alpha_\tau,h_0}(y_i,d_i,z_i)]=0$ and $\barEp[\z_{0i}^3]\leq C$, by the Lyapunov CLT we have $$\sqrt{n}\En[\psi_{\alpha_\tau,h_0}(y_i,d_i,z_i)] = \sqrt{n}\En[\psi_{\alpha_\tau,h_0}(y_i,d_i,z_i)] \rightsquigarrow N(0,\barEp[\tau(1-\tau)\z_{0i}^2])$$
and $\mathbb{U}_n(\tau)\rightsquigarrow N(0,1)$ follows by noting that $|\barEp[f_id_i\z_{0i}]|\geq c>0$.

~\\
\noindent Step 2.{\rm  (Bounding $\Gamma(\alpha,\hat h)$ for $|\alpha-\alpha_\tau|\leq\delta_n$)}
For any (fixed function) $\hat h \in \overline{\mathcal{F}}$, we have
\begin{equation}\label{Eq:BiasIQR}
\begin{array}{rl}
\Gamma(\alpha,\hat h) & = \Gamma(\alpha,h_0) + \Gamma(\alpha,\hat h)- \Gamma(\alpha,h_0)\\
& = \Gamma(\alpha, h_0) + \{ \Gamma(\alpha,\hat h) - \Gamma(\alpha,h_0) - \Gamma_h(\alpha,h_0)[\hat h - h_0]\}  + \Gamma_h(\alpha,h_0)[\hat h - h_0].\\
\end{array}
\end{equation}
Because $ \Gamma(\alpha_\tau,h_0) = 0$, by Taylor expansion there is some $\tilde \alpha \in [\alpha_\tau, \alpha]$ such that $$\Gamma(\alpha,h_0) = \Gamma(\alpha_\tau,h_0) + \Gamma_\alpha(\tilde \alpha,h_0)( \alpha - \alpha_\tau) = \left\{ \Gamma_\alpha(\alpha_\tau,h_0) + \eta_n \right\} ( \alpha - \alpha_\tau)$$
where $|\eta_n| \leq \delta_n \barEp[|d_i^2\z_{0i}|]\leq \delta_n C$ by relation (\ref{EqGamma1}) in Step 4.

Combining the argument above with relations (\ref{EqGamma20}), (\ref{BoundGamma2first}) and (\ref{BoundGamma2third}) in Step 3 below we have
\begin{equation}\label{EqBias}
\begin{array}{rl}
\Gamma(\alpha,\hat h) & =  \Gamma_h(\alpha_\tau,h_0)[\hat h - h_0] + \Gamma(\alpha_\tau,h_0) + \{\Gamma_\alpha(\alpha_\tau,h_0)\\
 & + O(\delta_n \barEp[|d_i^2\z_{0i}|])\}( \alpha - \alpha_\tau) + O(\delta_nn^{-1/2})\\
& = \Gamma_\alpha(\alpha_\tau,h_0) ( \alpha - \alpha_\tau) + O(\delta_n|\alpha-\alpha_\tau| \barEp[|d_i^2\z_{0i}|] + \delta_nn^{-1/2})\\
\end{array}
\end{equation}

\noindent Step 3. {\rm (Relations for $\Gamma_h$)} The directional derivative $\Gamma_h$ with respect the direction $\hat h - h_0$ at a point $\tilde h = (\tilde g,\tilde z)$ is given by $$\begin{array}{rl}
 \Gamma_h(\alpha, \tilde h)[\hat h - h_0]  &  = -\barEp[f_{\epsilon_i\mid d_i,z_i}(d_i(\alpha-\alpha_\tau)+\tilde g_i - g_{\tau i})\tilde \z_{0i}\{\hat g_i - g_{\tau i}\}]  \\
   & + \barEp[ ( \tau - 1\{ y_i \leq \tilde g_i + d_i\alpha\}) \{\hat \z_i-\z_{0i}\}]\\
 \end{array}$$  Note that when $\Gamma_h$ is evaluated at $(\alpha_\tau, h_0)$ we have with probability $1-\Delta_n$ \begin{equation}\label{EqGamma20}|\Gamma_h(\alpha_\tau,h_0)[\hat h - h_0]| = |-\barEp[f_i\z_{0i}\{\hat g_i- g_{\tau i}\}]| \leq \delta_n n^{-1/2}\end{equation} by $\hat h \in \overline{\mathcal{F}}$ with probability at least $1-\Delta_n$, and by $P(y_i \leq g_{\tau i} + d_i\alpha_\tau\mid d_i, z_i) = \tau$.
The expression for $\Gamma_h$ also leads to the following bound
\begin{equation}\label{BoundGamma2first}\begin{array}{l} \left| \Gamma_h(\alpha,h_0)[\hat h - h_0]\right. - \left.  \Gamma_h(\alpha_\tau, h_0)[\hat h - h_0]\right| = \\
= |\barEp[ \{f_{\epsilon_i\mid d_i,z_i}(0)-f_{\epsilon_i\mid d_i,z_i}(d_i(\alpha-\alpha_\tau))\}\z_{0i}\{\hat g_i - g_{\tau i}\}]\\ + \barEp[\{F_i(0)-F_i(d_i(\alpha-\alpha_\tau))\}\{\hat\z_i-\z_{0i}\}]|\\
\leq \barEp[|\alpha-\alpha_\tau| \ \bar f' |d_i  \z_{0i}| \ |\hat g_i - g_{\tau i}|] + \barEp[ \bar f |(\alpha-\alpha_\tau)d_i| \  |\hat \z_i-\z_{0i}|]\\
 \leq\bar f' |\alpha-\alpha_\tau| \{ \barEp[|\hat g_i - g_{\tau i}|^2] \  \barEp[\z_{0i}^2d_i^2]\}^{1/2}
 +\bar f|\alpha-\alpha_\tau|  \{\barEp[(\hat \z_i-\z_{0i})^2] \barEp[d_i^2]\}^{1/2}\\
 \lesssim_P |\alpha-\alpha_\tau|\delta_n\\
   \end{array}\end{equation}

The second directional derivative $\Gamma_{hh}$ at $\tilde h = (\tilde g, \tilde \z)$ with respect to the direction $\hat h - h_0$, provided $\hat h \in \overline{\mathcal{F}}$, can be bounded by
\begin{equation}\label{BoundGamma2second}\begin{array}{rl}
 \left|\Gamma_{hh}(\alpha, \tilde h)[\hat h - h_0, \hat h - h_0]  \right| & =  \left| -\barEp[f_{\epsilon_i\mid d_i,z_i}'(d_i(\alpha-\alpha_\tau)+\tilde g_i-g_{\tau i})\tilde \z_i\{\hat g_i - g_{\tau i}\}^2]  \right. \\
  & \left. + 2\En[ f_{\epsilon_i\mid d_i, z_i}(d_i(\alpha-\alpha_\tau)+\tilde g_i - g_{\tau i})\{\hat g_i - g_{\tau i}\}\{\hat \z_i - \z_{0i}\}]\right|\\
& \leq  \bar f'\barEp[|\z_{0i}|\{\hat g_i - g_{\tau i}\}^2]  + \bar f'\barEp[|\hat\z_i - \z_{0i}|\{\hat g_i - g_{\tau i}\}^2] \\
  &  + 2\bar f\barEp[ |\hat g_i - g_{\tau i}|| \ |\hat \z_i - \z_{0i}|]\\
& \leq \delta_n n^{-1/2}\\
 \end{array}\end{equation}
since $\tilde h \in [ h_0, \hat h]$, $|\tilde \z(d_i,z_i)|\leq |\z_{0}(d_i,z_i)|+|\hat \z(d_i,z_i)-\z_{0}(d_i,z_i)|$, and the last bound follows from $\hat h \in \overline{\mathcal{F}}$.

Therefore, provided that $\hat h \in \overline{\mathcal{F}}$,  we have
\begin{equation}\label{BoundGamma2third}\begin{array}{rl}
 \left|\Gamma(\alpha,\hat h) - \Gamma(\alpha,h_0)  - \Gamma_h(\alpha,h_0)\left[\hat h-h_0\right]\right| & \leq {\displaystyle \sup_{\tilde h \in [ h_0, \hat h ]}} \left| \Gamma_{hh}(\alpha, \tilde h)\left[\hat h - h_0, \hat h-h_0\right] \right|\\
 & \lesssim \delta_n n^{-1/2}.
 \end{array} \end{equation}

~\\
\noindent Step 4. {\rm (Relations for $\Gamma_\alpha$)} By definition of $\Gamma$, its derivative with respect to $\alpha$ at $(\alpha,\tilde h)$ is
$$ \Gamma_\alpha(\alpha,\tilde h) = -\barEp[ f_{\epsilon_i\mid d_i,z_i}(d_i(\alpha-\alpha_\tau)+\tilde g_i-g_{\tau i})d_i\tilde \z_i]. $$
Therefore, evaluating $\Gamma_\alpha(\alpha,\tilde h)$ at $\alpha=\alpha_\tau$ and $\tilde h = h_0$, since for $ f_{\epsilon_i\mid d_i,z_i}(0)=f_i$  we have
\begin{equation}\label{EqGamma10} \Gamma_\alpha(\alpha_\tau,h_0) = -\barEp[ f_i d_i \z_{0i}].\end{equation} Moreover, $\Gamma_\alpha$ also satisfies
 \begin{equation}\label{EqGamma1}
 \begin{array}{rl}
 \left| \Gamma_\alpha(\alpha,h_0) - \Gamma_\alpha(\alpha_\tau,h_0)\right| & = \left| \barEp[ f_{\epsilon_i\mid d_i,z_i}(d_i(\alpha-\alpha_\tau)\mid d_i,z_i) \z_{0i} d_i] - \barEp[ f_i \z_{0i} d_i]\right|\\
 & \leq |\alpha-\alpha_\tau| \bar f'\barEp[ | d_i^2\z_{0i}|] \\
 & \leq |\alpha-\alpha_\tau| \bar f'\{\barEp[ d_i^4]\barEp[\z_{0i}^2]\}^{1/2} \leq C'|\alpha-\alpha_\tau|\end{array}\end{equation}
since $\barEp[d_i^4] \vee \barEp[\z_P{0i}^4] \leq C$ and $\bar f'<C$ by Condition IQR(i).

\noindent Step 5. {\rm (Estimation of Variance)}
First note that
\begin{equation}\label{Ed:FirstVarianceTerm}\begin{array}{rl}
& |\En[\hat f_id_i\hat \z_i]-\barEp[f_id_i\z_{0i}] | \\
& = | \En[\hat f_id_i\hat \z_i]-\En[f_id_i\z_{0i}] |+| \En[ f_id_i \z_{0i}]-\barEp[f_id_i\z_{0i}] |\\
& \leq | \En[(\hat f_i-f_i)d_i\hat \z_i]|+|\En[f_id_i(\hat \z_i-\z_{0i})] |+ |\En[ f_id_i \z_{0i}]-\barEp[f_id_i\z_{0i}]|\\
& \leq | \En[(\hat f_i-f_i)d_i(\hat \z_i-\z_{0i})]|+| \En[(\hat f_i-f_i)d_i\z_{0i}]|\\
& +\|f_id_i\|_{2,n}\|\hat \z_i-\z_{0i}\|_{2,n} + |\ \En[ f_id_i \z_{0i}]-\barEp[f_id_i\z_{0i}] |\\
& \lesssim_P \|(\hat f_i-f_i)d_i\|_{2,n}\|\hat \z_i-\z_{0i}\|_{2,n}+\|\hat f_i-f_i\|_{2,n}\|d_i\z_{0i}\|_{2,n}\\
&+\|f_id_i\|_{2,n}\|\hat \z_i-\z_{0i}\|_{2,n} + | \En[ f_id_i \z_{0i}]-\barEp[f_id_i\z_{0i}] |\\
& \lesssim_P \delta_n\end{array}\end{equation}
because $f_i, \hat f_i \leq C$, $\barEp[d_i^4] \leq C$, $\barEp[\z_{0i}^4]\leq C$ by Condition IQR(i) and Condition IQR(iv).


Next we proceed to control the other term of the variance. We have
\begin{equation}\label{Eq:Denominator}\begin{array}{rl}
& |\ \|\psi_{\check\alpha_\tau,\hat h}(y_i,d_i,z_i)\|_{2,n}  -  \| \psi_{\alpha_\tau,h_0}(y_i,d_i,z_i)\|_{2,n}  | \leq \|\psi_{\check\alpha_\tau,\hat h}(y_i,d_i,z_i) -  \psi_{\alpha_\tau,h_0}(y_i,d_i,z_i)\|_{2,n} \\
& \leq \|\psi_{\check\alpha_\tau,\hat h}(y_i,d_i,z_i) - (\tau-1\{y_i\leq d_i\check\alpha_\tau+\tilde g_i\})\z_{0i}\|_{2,n} \\
&+ \|(\tau-1\{y_i\leq d_i\check\alpha_\tau+\tilde g_i\})\z_{0i} - \psi_{\alpha_\tau,h_0}(y_i,d_i,z_i)\|_{2,n} \\
& \leq \|\hat \z_i-\z_{0i}\|_{2,n} +\|(1\{y_i\leq d_i\alpha_\tau+ g_{\tau i}\}-1\{y_i\leq d_i\check\alpha_\tau+\tilde g_i\})\z_{0i}\|_{2,n}\\
& \leq \|\hat \z_i-\z_{0i}\|_{2,n} +\|\z_{0i}^2\|_{2,n}^{1/2}\|1\{|\epsilon_i|\leq |d_i(\alpha_\tau-\check\alpha_\tau)+ g_{\tau i}-\tilde g_i|\}\|_{2,n}^{1/2}\\
& \lesssim_P \delta_n\end{array}\end{equation}
 by IQR(ii) and IQR(iv). Also, $|\En[\psi_{\alpha_\tau,h_0}^2(y_i,d_i,z_i)] - \barEp[\psi_{\alpha_\tau,h_0}^2(y_i,d_i,z_i)]|\lesssim_P \delta_n$ by independence and bounded moment conditions in Condition IQR(ii).

Step 6. (Main Step for $\chi^2$) Note that the denominator of $L_n(\alpha_\tau)$ was analyzed in relation (\ref{Eq:Denominator}) of Step 5. Next consider the numerator of $L_n(\alpha_\tau)$. Since $\Gamma(\alpha_\tau,h_0)=\barEp[\psi_{\alpha_\tau, h_0}(y_i,d_i,z_i)]=0$ we have $$\En[ \psi_{\alpha_\tau,\hat h}(y_i,d_i,z_i)] = (\En-\barEp)[ \psi_{\alpha_\tau,\hat h}(y_i,d_i,z_i) - \psi_{\alpha_\tau, h_0}(y_i,d_i,z_i)] + \Gamma(\alpha_\tau,\hat h)+\En[\psi_{\alpha_\tau, h_0}(y_i,d_i,z_i)].$$
By $\hat h \in \overline{\mathcal{F}}$ with probability $1-\Delta_n$  and (\ref{EqBias}) with $\alpha=\alpha_\tau$, it follows that with the same probability
$$ |(\En-\barEp)[ \psi_{\alpha_\tau,\hat h}(y_i,d_i,z_i) - \psi_{\alpha_\tau, h_0}(y_i,d_i,z_i)]| \leq \delta_nn^{-1/2} \ \ \mbox{and} \ \ |\Gamma(\alpha_\tau,\hat h)| \lesssim \delta_n n^{-1/2}.$$
The identity $nA_n^2 = nB_n^2 + n(A_n-B_n)^2+2nB_n(A_n-B_n)$ for $A_n =\En[  \psi_{\alpha_\tau,\hat h}(y_i,d_i,x_i)]$  and  $B_n = \En[  \psi_{\alpha_\tau, h_0}(y_i,d_i,x_i)] \lesssim_P \{\tau(1-\tau)\barEp[\z_{0i}^2]\}^{1/2}n^{-1/2}$ yields
\begin{eqnarray*}
 nL_n(\alpha_\tau) & = & \frac{n|\En[  \psi_{\alpha_\tau,\hat h}(y_i,d_i,z_i)]|^2}{\En[ \psi_{\alpha_\tau,\hat h}^2(y_i,d_i,z_i)]} \\
 & =  & \frac{n|\En[  \psi_{\alpha_\tau,h_0}(y_i,d_i,z_i)]|^2+O_P(\delta_n)}{\barEp[\tau(1-\tau)\z_{0i}^2]+O_P(\delta_n)}= \frac{n|\En[ \psi_{\alpha_\tau, h_0}(y_i,d_i,z_i)]|^2}{\barEp[\tau(1-\tau)\z_{0i}^2]} +O_P(\delta_n)
  \end{eqnarray*}
since $\tau(1-\tau)\barEp[\z_{0i}^2]$ is bounded away from zero because $\underline{C} \leq |\barEp[f_id_i\z_{0i}]|=|\barEp[v_i\z_{0i}]|\leq  \{ \barEp[v_i^2] \barEp[\z_{0i}^2]\}^{1/2}$ and $\barEp[v_i^2]$ is bounded above uniformly. Therefore, the result then follows since $\sqrt{n}\En[  \psi_{\alpha_\tau,h_0}(y_i,d_i,z_i)]\rightsquigarrow N(0,\tau(1-\tau)\barEp[\z_{0i}^2])$.

{\bf $\square$}

\section{Rates of convergence for $\hat f$}

Let $\hat Q(u\mid \tilde x) = \tilde x'\hat\eta_u$ for $u=\tau-h,\tau+h$. Using a Taylor expansion for the conditional quantile function $Q(\cdot\mid \tilde x)$,  assuming that $\sup_{|\tilde \tau-\tau|\leq h}|Q'''(\tilde \tau\mid \tilde x)|\leq C$ we have
$$ | \widehat Q'(\tau\mid \tilde x)-Q'(\tau\mid \tilde x)| \leq \frac{|Q(\tau+h\mid \tilde x)-\tilde x'\hat\eta_{\tau+h}|+|Q(\tau-h\mid \tilde x)-\tilde x'\hat\eta_{\tau-h}|}{h} + Ch^2.$$
In turn, to estimate $f_i$, the conditional density at $Q(\tau\mid \tilde x)$, we set $\hat f_i = 1/ \widehat Q'(\tau\mid \tilde x_i)$ which leads to
\begin{equation}\label{Eq:Errorhatf}|f_i - \hat f_i | = \frac{| \widehat Q'(\tau\mid \tilde x_i)-Q'(\tau\mid \tilde x_i)|}{\widehat Q'(\tau\mid \tilde x_i)Q'(\tau\mid \tilde x_i)}= (\hat f_i f_i)\cdot | \widehat Q'(\tau\mid \tilde x_i)-Q'(\tau\mid \tilde x_i)|.\end{equation}

\begin{lemma}[Bound Rates for Density Estimator]\label{lemma:boundr2nrinftyn}
Let $\tilde x = (d,x)$, suppose that $c\leq f_i \leq C$, $\sup_\epsilon f'_{\epsilon_i\mid \tilde x_i}(\epsilon\mid \tilde x_i) \leq \bar f'\leq C$, $i=1,\ldots,n$, uniformly in $n$. Assume further that with probability $1-\Delta_n$ we have for $u=\tau-h, \tau+h$ that
$$ \|\tilde x_i'(\hat\eta_u-\eta_u)+r_{ui}\|_{2,n}\leq \frac{C}{\kappa_\cc}\sqrt{\frac{s\log (p\vee n)}{n}}, \ \ \|\hat\eta_u-\eta_u\|_{1}\leq \frac{C}{\kappa_\cc^2}\sqrt{\frac{s^2\log (p\vee n)}{n}} \ \ \ $$ $$\mbox{and} \ \ |\hat\eta_{u1} - \eta_{u1}| \leq \frac{C}{\kappa_\cc}\sqrt{\frac{s\log (p\vee n)}{n}}.$$  Then if ${\displaystyle \sup_{|\tilde \tau-\tau|\leq h}}|Q'''(\tilde \tau\mid \tilde x)|\leq C$,  ${\displaystyle \max_{i\leq n}}\|x_i\|_\infty\sqrt{s^2\log (p\vee n)} + \max_{i\leq n}|d_i| \sqrt{s\log(p\vee n)}\leq \delta_nh\kappa_\cc^2\sqrt{n}$ and $ {\displaystyle \max_{u=\tau+h,\tau-h}}\|r_{ui}\|_{\infty} \leq h\delta_n$  we have
$$ \|f_i - \hat f_i\|_{2,n} \lesssim_P \frac{1}{h\kappa_{\cc}}\sqrt{\frac{s\log (n\vee p)}{n}} + h^2, \ \ \mbox{and} $$ $$ \max_{i\leq n}|f_i - \hat f_i| \lesssim_P \max_{u=\tau+h,\tau-h}\frac{\|r_{ui}\|_{\infty}}{h}+\frac{\max_{i\leq n}\| x_i\|_\infty}{h\kappa_{\cc}^2}\sqrt{\frac{s^2\log (n\vee p)}{n}} $$ $$ + \frac{\max_{i\leq n}| d_i|_\infty}{h\kappa_{\cc}}\sqrt{\frac{s\log (n\vee p)}{n}} +h^2.$$\end{lemma}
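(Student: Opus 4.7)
The plan is to reduce the lemma to controlling the estimation error of $Q'(\tau\mid \tilde x_i)$ by the finite difference $\hat Q'(\tau\mid \tilde x_i)=[\tilde x_i'\hat\eta_{\tau+h}-\tilde x_i'\hat\eta_{\tau-h}]/(2h)$, and then to transfer this bound to $\hat f_i-f_i$ via the identity \eqref{Eq:Errorhatf}. First, using the model $Q(u\mid\tilde x_i)=\tilde x_i'\eta_u+r_{ui}$ and a cubic Taylor expansion of $u\mapsto Q(u\mid\tilde x_i)$ around $u=\tau$ (which costs $Ch^2$ because $\sup_{|\tilde\tau-\tau|\leq h}|Q'''(\tilde\tau\mid\tilde x_i)|\leq C$), I would write
\[
\hat Q'(\tau\mid\tilde x_i)-Q'(\tau\mid\tilde x_i)
 =\frac{[\tilde x_i'(\hat\eta_{\tau+h}-\eta_{\tau+h})-r_{\tau+h,i}]-[\tilde x_i'(\hat\eta_{\tau-h}-\eta_{\tau-h})-r_{\tau-h,i}]}{2h}+O(h^2).
\]

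Second, I would establish the uniform bound on $|\hat Q'(\tau\mid\tilde x_i)-Q'(\tau\mid\tilde x_i)|$ by splitting $\tilde x_i=(d_i,x_i')'$ and applying H\"older: the first coordinate contributes $|d_i|\,|\hat\eta_{u1}-\eta_{u1}|$ and the remaining coordinates contribute at most $\|x_i\|_\infty\|\hat\eta_u-\eta_u\|_1$. Combining with the hypotheses on $|\hat\eta_{u1}-\eta_{u1}|$, $\|\hat\eta_u-\eta_u\|_1$ and $\|r_{ui}\|_\infty$ yields exactly the displayed $\max_i|f_i-\hat f_i|$ rate once the growth condition $\max_i\|x_i\|_\infty\sqrt{s^2\log(p\vee n)}+\max_i|d_i|\sqrt{s\log(p\vee n)}\leq\delta_n h\kappa_\cc^2\sqrt n$ and $\max_u\|r_{ui}\|_\infty\leq h\delta_n$ make the sup error $o(1)$. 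For the $\|\cdot\|_{2,n}$ statement, I would instead bound
\[
\|\hat Q'(\tau\mid\tilde x_i)-Q'(\tau\mid\tilde x_i)\|_{2,n}\lesssim \frac{1}{h}\sum_{u=\tau\pm h}\bigl(\|\tilde x_i'(\hat\eta_u-\eta_u)+r_{ui}\|_{2,n}+\|r_{ui}\|_{2,n}\bigr)+h^2,
\]
and then plug in the assumed $\|\tilde x_i'(\hat\eta_u-\eta_u)+r_{ui}\|_{2,n}\leq (C/\kappa_\cc)\sqrt{s\log(p\vee n)/n}$ and $\|r_{ui}\|_{2,n}\leq h\delta_n$ (the latter being dominated).

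Third, to convert these bounds on $\hat Q'-Q'$ into bounds on $\hat f_i-f_i$ through \eqref{Eq:Errorhatf}, I need that $\hat f_i$ is uniformly bounded. This is the one subtle step: I would argue by a simple ``bootstrap'' from the uniform bound on $\hat Q'-Q'$ already established in the previous paragraph, since $1/\hat f_i=\hat Q'(\tau\mid\tilde x_i)\geq 1/f_i-|\hat Q'-Q'|\geq 1/(2\bar{f}^{-1})$ with high probability once $|\hat Q'-Q'|\leq \underline f/2$, so $\hat f_i\leq 2f_i\leq 2\bar f$. Multiplying the two preceding bounds by $\hat f_i f_i\lesssim 1$ then produces the stated estimates. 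The main obstacle I anticipate is precisely this uniform control of $\hat f_i$, since the finite-difference estimator can blow up if $|\hat Q'-Q'|$ is comparable to $1/f_i$; the growth condition relating $s$, $p$, $n$, $h$ and $\kappa_\cc$ is exactly what is needed to rule this out, and all other steps are then essentially triangle inequalities and H\"older.
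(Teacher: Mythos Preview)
Your proposal is correct and follows essentially the same approach as the paper: Taylor-expand $Q(\cdot\mid\tilde x_i)$ to isolate the $h^2$ bias, decompose the finite-difference estimation error via the $d$-coordinate and the $\ell_1$--$\ell_\infty$ bound on the $x$-coordinates, bootstrap a uniform bound on $\hat f_i$ from the growth condition, and then transfer to $\hat f_i-f_i$ through the identity~\eqref{Eq:Errorhatf}. The only cosmetic difference is that you bound $|\hat Q'-Q'|$ first and then invert (so the bootstrap reads as ``$\hat Q'$ stays bounded away from zero''), whereas the paper writes $|\hat f_i-f_i|\leq \hat f_i f_i c_n+d_n$ and algebraically solves $\hat f_i(1-f_ic_n)\leq f_i+d_n$ to conclude $\hat f_i$ is bounded; these are equivalent. (Minor slip: in your bootstrap step the threshold should be of order $1/(2\bar f)$, not $\underline f/2$, since you need $1/\hat f_i\geq 1/\bar f-|\hat Q'-Q'|$ to stay positive.)
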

{\bf Proof.}
Letting $(\delta^u_{\alpha};\delta^u_{\beta}) = \eta_{u} - \hat \eta_{u}$ and $\tilde x_i = (d_i,x_i')'$ we have that
 $$\begin{array}{rl}
  |\hat f_i - f_i| & \leq |f_i \hat f_i \frac{\tilde x_i'(\eta_{\tau + h} - \hat \eta_{\tau+h})+r_{\gtau +h,i} - \tilde x_i'(\eta_{\tau - h} - \hat \eta_{\tau-h})-r_{\gtau-h,i}}{2h}| + C h^2 \\
  & = h^{-1}(f_i \hat f_i)  | x_i'\delta^{\tau+h}_\beta + d_i\delta^{\tau+h}_\alpha+r_{\gtau +h,i} - x_i'\delta^{\tau-h}_\beta - d_i\delta^{\tau-h}_\alpha-r_{\gtau-h,i}\}| + C h^2 \\
  & \leq h^{-1}(f_i \hat f_i)  \left\{ K_x\|\eta_{\tau+h}\|_1 + K_x\|\eta_{\tau-h}\|_1+ |d_i| \cdot|\delta^{\tau+h}_\alpha|+|d_i| \cdot|\delta^{\tau-h}_\alpha|\right.\\\
  & \left.+ |r_{\gtau +h,i} -r_{\gtau-h,i}|\right\} + C h^2. \\ \end{array} $$
The result follows because for sequences $d_n\to 0, c_n\to 0$ we have $|\hat f_i-f_i|\leq |\hat f_i f_i|c_n + d_n $ implies that $\hat f_i (1-f_ic_n) \leq  f_i + d_n$. Since $f_i$ is bounded, $f_ic_n\to 0$ which implies that $\hat f_i$ is bounded. Therefore, $|\hat f_i-f_i|\lesssim c_n + d_n$. We take $d_n = Ch^2 \to 0$ and  $$c_n = h^{-1}\left\{ K_x\|\eta_{\tau+h}\|_1 + K_x\|\eta_{\tau-h}\|_1+ |d_i| \cdot|\delta^{\tau+h}_\alpha|+|d_i| \cdot|\delta^{\tau-h}_\alpha|+ |r_{\gtau +h,i} -r_{\gtau-h,i}|\right\} \to_P 0$$ by the growth condition.

Moreover, we have
{\small $$ \|(\hat f_i - f_i)/f_i\|_{2,n}  \lesssim \frac{\|\hat f_i \tilde x_i'(\hat \eta_{\tau + h} - \eta_{\tau + h})+\hat f_ir_{\gtau+h,i}\|_{2,n} + \|\hat f_i \tilde x_i'(\hat \eta_{\tau - h} - \eta_{\tau - h})+\hat f_ir_{\gtau+h,i}\|_{2,n}}{h} + Ch^2.$$}
By the previous result $\hat f_i$ is uniformly bounded from above with high probability. Thus, the  result follows by the assumed prediction norm rate $\|\tilde x_i'(\hat\eta_u-\eta_u)+r_{ui}\|_{2,n}$.

{\bf $\square$}


\end{document}